\numberwithin{equation}{section}
\newtheorem{theorem}{Theorem}[section]
\newtheorem{corollary}[theorem]{Corollary}
\newtheorem{lemma}[theorem]{Lemma}
\newtheorem{proposition}[theorem]{Proposition}
\theoremstyle{definition}
\newtheorem{definition}[theorem]{Definition}
\newtheorem{remark}[theorem]{Remark}
\newtheorem{example}[theorem]{Example}
\newcommand{\add}{\operatorname{\mathsf{add}}\nolimits}
\newcommand{\End}{\operatorname{End}\nolimits}
\newcommand{\Ext}{\operatorname{Ext}\nolimits}
\newcommand{\Hom}{\operatorname{Hom}\nolimits}
\renewcommand{\Im}{\operatorname{Im}\nolimits}
\newcommand{\kD}{\operatorname{D}}
\newcommand{\Ker}{\operatorname{Ker}\nolimits}
\newcommand{\Cok}{\operatorname{Cok}\nolimits}
\renewcommand{\mod}{\operatorname{\mathsf{mod}}\nolimits}
\newcommand{\thick}{\operatorname{\mathsf{thick}}\nolimits}
\newcommand{\Sub}{\operatorname{\mathsf{Sub}}\nolimits}
\newcommand{\gl}{\operatorname{gl.dim}\nolimits}
\newcommand{\supp}{\operatorname{Supp}\nolimits}
\newcommand{\Deg}{\operatorname{deg}\nolimits}
\newcommand{\injdim}{\operatorname{inj.dim}\nolimits}
\newcommand{\proj}{\operatorname{proj}\nolimits}
\newcommand{\Max}{\operatorname{max}\nolimits}
\newcommand{\xto}[1]{\xrightarrow{#1}}
\renewcommand{\top}{\operatorname{top}\nolimits}
\begin{document}
\title[Tilting theory of preprojective algebras and $c$-sortable elements]{Tilting theory of preprojective algebras and $c$-sortable elements}
\author[Y. Kimura]{Yuta Kimura}
\address{Graduate School of Mathematics, Nagoya University, Frocho, Chikusaku, Nagoya, 464-8602, Japan}
\email{m13025a@math.nagoya-u.ac.jp}
\date{\today}
\begin{abstract}
For a finite acyclic quiver $Q$ and the corresponding preprojective algebra $\Pi$, we study the factor algebra $\Pi_w$ associated with an element $w$ in the Coxeter group of $Q$ introduced by Buan-Iyama-Reiten-Scott.
The algebra $\Pi_w$ has a natural $\mathbb{Z}$-grading.
We prove that $\underline{\Sub}^{\mathbb{Z}}\Pi_w$ has a tilting object $M$ if $w$ is $c$-sortable.
Moreover, we show that the endomorphism algebra of $M$ is isomorphic to the stable Auslander algebra of a certain torsion free class of $\mod kQ$.
\end{abstract}
\maketitle
\section{Introduction}
The preprojective algebra $\Pi$ of a finite acyclic quiver $Q$ plays important roles in representation theory of algebras.
One of them is categorifications of cluster algebras introduced by Fomin-Zelevinsky \cite{FZ}.
Namely, $\Pi$ gives $2$-Calabi-Yau triangulated categories ($2$-CY for short) with cluster tilting objects.

For a Dynkin quiver $Q$, Geiss-Leclerc-Schr\"{o}er showed that the stable category $\underline{\mod}\,\Pi$ of $\Pi$ is a $2$-CY triangulated category with cluster tilting objects \cite{GLS}.
More generally,  for a finite acyclic quiver $Q$ and an element $w$ of the Coxeter group of $Q$, Buan-Iyama-Reiten-Scott constructed an algebra $\Pi_{w}$.
They showed that $\Pi_{w}$ is an Iwanaga-Gorenstein algebra with injective dimension at most one, and therefore the category $\Sub\,\Pi_{w}$ of submodules of free $\Pi_{w}$-modules is a Frobenius category. 
Moreover they showed that the stable category $\underline{\Sub}\,\Pi_{w}$ is a $2$-CY triangulated category and contains a cluster tilting object associated with each reduced expression of $w$ \cite{BIRSc}.
When $Q$ is of Dynkin type and $w$ is the longest element of the Coxeter group, one obtains the stable category $\underline{\mod}\,\Pi$.

The cluster category ${\mathsf C}_Q$ of finite acyclic quiver $Q$ is another example of a $2$-CY triangulated category with cluster tilting objects \cite{BMRRT}.
More generally, for a finite dimensional algebra $A$ of global dimension at most two, Amiot's  cluster category ${\mathsf C}_A$ is a $2$-CY triangulated category with a cluster tilting object $A$ if it is Hom-finite \cite{Amio}.
Amiot-Reiten-Todorov \cite{ART} showed that there exists a close connection between these $2$-CY categories $\underline{\Sub}\,\Pi_w$ and ${\mathsf C}_A$.
Namely, for any element $w$ of the Coxeter group of a finite acyclic quiver $Q$, 
there exists a triangle equivalence
\begin{align}\label{aireq}
\underline{\Sub}\,\Pi_{w}\simeq{\mathsf C}_{\Gamma_{w}}
\end{align}
for some finite dimensional algebra $\Gamma_w$.

The aim of this paper is to construct a derived category version of the result of Amiot-Reiten-Todorov.
More precisely, we regard $\Pi_w$ as a $\mathbb{Z}$-graded algebra whose grading is given by the orientation of our quiver $Q$, and consider the stable category $\underline{\Sub}^{\mathbb{Z}}\Pi_w$ of graded $\Pi_w$-submodules of graded free $\Pi_w$-modules.
We denote by $c$ the Coxeter element corresponding to the orientation of $Q$.
In this setting, the category $\underline{\Sub}^{\mathbb{Z}}\Pi_w$ behaves nicely if $w$ is a $c$-sortable element (see Section \ref{preliminaries} for Definition), and we have the following Theorem.

\begin{theorem}[see Theorem \ref{thm}]\label{intro1}
If $w=c^{(0)}\cdots c^{(m)}$ is $c$-sortable, then  $\underline{\Sub}^{\mathbb{Z}}\Pi_w$ has a tilting object $M=\bigoplus _{i=0}^{m}(\Pi_{c^{(0)}\cdots c^{(i)}})(i)$.
\end{theorem}

Our second main result gives a simple description of $A_{w}:=\underline{\End}_{\Pi_w}^{\mathbb{Z}}(M)$.
Let $M_{0}$ be the degree zero part of $M$.
By \cite[Theorem 3.11]{AIRT}, there exists a tilting $kQ$-module $T$ such that $\Sub T$ has an additive generator $M_{0}$ (see Theorem \ref{airt} for details).
Using this notation, we have the following theorem.

\begin{theorem}[see Theorems \ref{usubz=dend}, \ref{endalg}, and \ref{end2}]\label{intro2}
We have the following:
\begin{itemize}
	\item[(a)] There exists an isomorphism of algebras $A_{w}\simeq \End_{kQ}(M_{0})/[T]$.
	\item[(b)] The global dimension of $A_{w}$ is at most two.
	\item[(c)] We have a triangle equivalence \[\underline{\Sub}^{\mathbb{Z}}\Pi_{w}\simeq{\mathsf D}^{{\rm b}}(A_{w}).\]
\end{itemize}
\end{theorem}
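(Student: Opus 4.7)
The plan is to treat part (a) as the substantive computation and then deduce (b) and (c) from it together with Theorem~\ref{intro1}.

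\textbf{Part (a).} I would begin by computing the graded Hom spaces $\Hom^{\mathbb{Z}}_{\Pi_w}(M,M)$ before passing to the stable quotient. Since the grading on $\Pi$ is induced by the orientation of $Q$, the degree-zero part $(\Pi_w)_0$ is identified with (a quotient of) $kQ$, and each graded summand $\Pi_{c^{(0)}\cdots c^{(i)}}(i)$ is generated in degree $i$. A degree-zero graded homomorphism between two such summands is therefore determined by its restriction to the lowest-degree component, and that restriction is a morphism of $kQ$-modules. After checking that this restriction respects composition, one obtains $\Hom^{\mathbb{Z}}_{\Pi_w}(M,M)\simeq \End_{kQ}(M_0)$. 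Next I would identify the morphisms killed in passing to $\underline{\Sub}^{\mathbb{Z}}\Pi_w$. The projective-injective objects of $\Sub^{\mathbb{Z}}\Pi_w$ are the graded shifts of summands of $\Pi_w$, and by Theorem~\ref{airt} their degree-zero parts are exactly the summands of the tilting module $T$. A morphism in $\End_{kQ}(M_0)$ should then factor through $\add T$ if and only if the corresponding graded morphism factors through a projective-injective of $\Sub^{\mathbb{Z}}\Pi_w$, giving the isomorphism $A_w\simeq \End_{kQ}(M_0)/[T]$.

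\textbf{Parts (b) and (c).} For (b), $\End_{kQ}(M_0)/[T]$ is the stable endomorphism algebra of the additive generator of the functorially finite torsion-free class $\Sub T\subseteq \mod kQ$, and the bound $\gl A_w\le 2$ follows from the standard almost-split-sequence/projective-resolution argument for such stable Auslander algebras: for each simple $A_w$-module one builds a projective resolution of length at most two using the almost split sequence ending at the corresponding indecomposable of $\Sub T$. For (c), once $\gl A_w<\infty$, the canonical embedding $\mathsf{K}^{\rm b}(\proj A_w)\hookrightarrow \mathsf{D}^{\rm b}(\mod A_w)$ is an equivalence, and composing with the triangle equivalence of Theorem~\ref{intro1} yields $\mathsf{D}^{\rm b}(A_w)\simeq \underline{\Sub}^{\mathbb{Z}}\Pi_w$.

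\textbf{Main obstacle.} The real work sits in part (a), specifically in matching the two ideals: proving that the factorization ideal through projective-injectives in the graded stable category coincides exactly with $[T]$ in $\End_{kQ}(M_0)$, and not with a strictly smaller or larger ideal. The $c$-sortability of $w$ should be precisely what guarantees that the collection $\{(\Pi_{c^{(0)}\cdots c^{(i)}})_0\}_{i=0}^{m}$ assembles into an additive generator of $\Sub T$ compatibly across all indices, so that the degree-zero identification is uniform and lets one read off factorizations through $\add T$ as factorizations through graded free $\Pi_w$-summands. Bookkeeping this compatibility, and ruling out ``extra'' stable-category relations that do not come from factoring through $T$, is the step I expect to require the most care.
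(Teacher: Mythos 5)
Your reduction of (c) to finiteness of global dimension plus Theorem \ref{intro1} is exactly what the paper does, and your reading of (b) as a statement about a relative stable Auslander algebra is in the right spirit (the paper proves a general bound $\gl \End_A(M)/[T]\le 3n-1$ for a cotilting module $T$ via $(\add T)$-approximations and relative cosyzygies, Theorem \ref{end2}, rather than via almost split sequences; note that your AR-sequence sketch does not immediately descend to the quotient by $[T]$, since one divides by maps factoring through the Ext-injectives of $\Sub T$, not through projectives of $\End_{kQ}(M_0)$, so the naive length-two resolution needs the extra argument the paper supplies). But part (a), which you correctly identify as the substantive point, has a genuine gap at its very first step. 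You claim that restriction to degree zero gives an isomorphism $\End^{\mathbb{Z}}_{\Pi_w}(M)\simeq\End_{kQ}(M_0)$ because each summand is ``generated in degree $i$'' and a graded map is ``determined by its restriction to the lowest-degree component.'' The summand $(\Pi_{c^{(0)}\cdots c^{(i)}})(i)$ is generated in degree $-i$, so its lowest-degree component is \emph{not} the degree-zero part for $i>0$; a degree-zero graded morphism out of it is determined by the image of a generator sitting in degree $-i$, and there is no reason this is recoverable from, or faithfully reflected by, the restriction to $M_0$. The paper never asserts that $F\colon\End^{\mathbb{Z}}_{\Pi_w}(M)\to\End_{kQ}(M_0)$ is an isomorphism: it proves only that $F$ is surjective, and even that requires an induction on the length of $w$ using the source reflection $Q'=\mu_{u_1}(Q)$, the functor $\mathbb{G}=\Hom_\Pi(U,-)$, and the graded version of the Iyama--Reiten reduction (Theorem \ref{endred}), together with Lemma \ref{sur} saying that $s_{u_2}\cdots s_{u_l}$ is again sortable for the mutated Coxeter element.

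The second half of your (a), matching the ideal of maps factoring through graded projectives with $[T]$, is likewise left without a mechanism. Graded projectives come with arbitrary shifts $\Pi_w(a)$; the paper treats $a>0$ using $M=M_{\le 0}$, $a=0$ directly, and $a<0$ via Lemma \ref{facthr}, which says every negative-degree morphism between summands of $M$ factors through $\add P$ --- and that lemma is exactly where $c$-sortability enters, through the grading analysis of the BIRS quiver $Q_w$ (Lemma \ref{arrownega}: arrows of negative degree only occur between the ``last'' vertices $p_u$). Injectivity of the induced map $\underline F$ then needs the same inductive reflection machinery again (Lemma \ref{f1f2f3}). You flag this ideal-matching as the main obstacle, which is the right diagnosis, but the route you propose (first establishing $F$ as an isomorphism and then comparing projective-injectives with $\add T$) starts from an unproved --- and, given how the paper is forced to argue, almost certainly false in general --- identification, so the argument as written does not go through.
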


In the forthcoming paper, we will study the relationship between Amiot-Reiten-Todorov equivalence (\ref{aireq}) and our equivalence Theorem \ref{intro2} (c), that is, we have a commutative diagram of functors.

This paper is organized as follows.
In Section \ref{preliminaries}, we give some notations used in this paper and some results of \cite{BIRSc} which will be used in the proofs.
In Section \ref{gradedstrof}, we prove some basic properties of the grading of $\Pi_w$ when $w$ is a $c$-sortable element and recall some results of \cite{AIRT}.
In Section \ref{tilting}, we prove Theorem \ref{intro1}.
In Section \ref{theendomorphism}, we prove (a) and (c) of Theorem \ref{intro2}.
Theorem \ref{intro2} (b) follows from a general result in Section \ref{gldimofend} on the global dimension of relative version of stable Auslander algebras.

In this paper, we denote by $k$ an algebraically closed field.
All algebras are $k$-algebras, and all graded algebras are $\mathbb{Z}$-graded $k$-algebras.
We always deal with finitely generated left modules.
For an algebra $A$, we denote by $\mod\,A$ the category of finitely generated $A$-modules.
For a graded algebra $A$, we denote by $\mod^{\mathbb{Z}}A$ the category of finitely generated $\mathbb{Z}$-graded $A$-modules with degree zero morphisms.
For a category $\mathcal{C}=\mod A$ or $\mod^{\mathbb{Z}}A$ and $M\in \mathcal{C}$, we denote by $\add\,M$ the additive closure of $M$ in $\mathcal{C}$.
The composition of morphisms $f: X \to Y$ and $g: Y \to Z$ is denoted by $fg=g\circ f: X \to Z$.
For two arrows $\alpha$, $\beta$ of a quiver such that the target point of $\alpha$ is the start point of $\beta$, we denote by $\alpha\beta$ the composition of $\alpha$ and $\beta$.
\section{Preliminaries}\label{preliminaries}
We fix a finite acyclic quiver $Q=(Q_0,Q_1,s,t)$, where $Q_0=\{1,\ldots,n\}$ is the set of vertices, $Q_1$ is the set of arrows, 
and an arrow $\alpha$ goes from $s(\alpha)$ to $t(\alpha)$.
Let $kQ$ be the path algebra of $Q$ over $k$, and 
for a vertex $u$ of $Q$, we denote by $e_{u}$ the corresponding idempotent of $kQ$.
The {\it double quiver} $\overline{Q}=(\overline{Q}_0,\overline{Q}_1,s,t)$ of a quiver $Q$ is defined by $\overline{Q}_0=Q_0$, $\overline{Q}_1=Q_1\sqcup \{ \alpha^{\ast}: t(\alpha)\to s(\alpha) \mid \alpha \in Q_1 \}$.
Then we define the {\it preprojective algebra} $\Pi$ of $Q$ by
	\begin{align}
	\Pi :=k\overline{Q}/\langle \displaystyle\sum\limits_{\alpha\in Q_1} \alpha\alpha^{\ast}-\alpha^{\ast}\alpha \rangle . \notag
	\end{align}
The {\it Coxeter group} $W=W_{Q}$ of $Q$ is the group generated by the set  $\{s_{u} \mid u\in Q_0\}$ with relations $s_{u}^2=1$, $s_{u}s_{v}=s_{v}s_{u}$ if there exist no arrows between $u$ and $v$, and $s_us_vs_u=s_vs_us_v$ if there exists exactly one arrow between $u$ and $v$.
An expression $w=s_{u_1}s_{u_2}\cdots s_{u_l}$ is {\it reduced} if for any  other expression $w=s_{v_1}s_{v_2}\cdots s_{v_m}$, we have $l\leq m$.
For an element $w$ in $W$ with a reduced expression $w=s_{u_1}s_{u_2}\cdots s_{u_l}$, let $\supp(w):=\{ u_{1}, u_{2}, \ldots, u_{l} \}\subset Q_{0}$, which is independent of the choice of a reduced expression of $w$ (see \cite[Corollary 1.4.8 (ii)]{BB}).
\begin{definition}
An element $c\in W$ is called a {\it Coxeter element} if there is an expression 
$c=s_{u_1}s_{u_2}\cdots s_{u_n}$, where $u_1,\ldots,u_n$ is a permutation of $1,\ldots,n$.
In this paper, we only consider a Coxeter element $c$ satisfying $e_{u_j}(kQ)e_{u_i}=0$ for $i < j$ which is uniquely determined by the orientation of $Q$.
\end{definition}
\begin{definition}\cite{R}\label{csortable}
Let $c$ be a Coxeter element of $W$. An element $w\in W$ is called a {\it $c$-sortable element} if there exists a reduced expression $w=s_{u_1}s_{u_2}\cdots s_{u_l}$ of the form $s_{u_1}s_{u_2}\cdots s_{u_l}=c^{(0)}c^{(1)}\cdots c^{(m)}$, where each $c^{(i)}$ is subsequence of $c$ and
	\begin{align}
	\supp(c^{(m)}) \subset \supp(c^{(m-1)}) \subset \cdots \subset \supp(c^{(0)}) \subset Q_0. \notag
	\end{align}
\end{definition}
Let $u$ be a vertex of $Q$.
We define the  two-sided ideal $I_u$ of $\Pi$ by \[I_u:=\Pi(1-e_u)\Pi.\]
Let $w=s_{u_1}s_{u_2}\cdots s_{u_l}$ be a reduced expression of $w\in W$. We define a two-sided ideal $I_w$ of $\Pi$ by \[I_w:=I_{u_1}I_{u_2}\cdots I_{u_l}.\]
Note that $I_w$ is independent of the choice of a reduced expression of $w$ by  \cite[Theorem I\hspace{-.1em}I\hspace{-.1em}I. 1.9]{BIRSc}.
We define the algebra $\Pi_w$ by \[\Pi_w:=\Pi/I_w.\]
We denote by $\Sub\Pi_w$ the full subcategory of $\mod\Pi_w$ of submodules of finitely generated free $\Pi_w$-modules.

We recall the following properties:
\begin{proposition} \cite{BIRSc}\label{birs}
For an element $w$ of the Coxeter group, we have the following results.
\begin{itemize}
\item[(a)] If $Q$ is non-Dynkin, then there exists an isomorphism of algebras $\Pi \xto{\sim} \End_{\Pi}(I_{w})$, where this isomorphism is given by $x\mapsto (\cdot x)$.
\item[(b)] The algebra $\Pi_w$ is finite dimensional and {\emph Iwanaga-Gorenstein} of dimension at most one, that is, $\injdim {}_{\Pi_w}(\Pi_w)\leq 1$ and $\injdim (\Pi_w)_{\Pi_w}\leq 1$.
\item[(c)] The category $\Sub\Pi_w$ is a Frobenius category.
\item[(d)] The stable category $\underline{\Sub}\,\Pi_w$ of $\Sub\Pi_w$ is $2$-Calabi-Yau triangulated category, that is, for any objects $X,Y\in\underline{\Sub}\,\Pi_w$ there is a functorial isomorphism $\underline{\Hom}_{\Pi_w}(X,Y)\simeq\kD\underline{\Hom}_{\Pi_w}(Y,X[2])$, where $\kD=\Hom_k(-,k)$. 
\item[(e)] For any reduced expression $w=s_{u_1}s_{u_2}\cdots s_{u_l}$, the object $T=\bigoplus_{i=1}^l\Pi_{u_1u_2\cdots u_i}$ is in $\Sub\,\Pi_{w}$.
\item[(f)]
For any reduced expression $w=s_{u_1}s_{u_2}\cdots s_{u_l}$, the object $T$ of (e) is a cluster tilting object of $\Sub\,\Pi_w$, that is, $\add T=\{X\in\mod \Pi_{w}\mid \Ext_{\Pi_{w}}^{1}(X,T)=0\}$.
\end{itemize}
\end{proposition}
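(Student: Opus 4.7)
The plan is to prove the five items in order, since each part feeds into the next. Throughout, the essential tool is the ideal factorisation $I_w = I_{u_1}I_{u_2}\cdots I_{u_l}$ together with the short exact sequences coming from the filtration $\Pi \supseteq I_{u_1} \supseteq I_{u_1}I_{u_2} \supseteq \cdots \supseteq I_w$.

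For (a), I would argue by induction on $\ell(w)$. The base case $w=s_u$ reduces to showing $\End_\Pi(I_u)\simeq \Pi$ via right multiplication. Here the non-Dynkin hypothesis is crucial: it guarantees that $\Pi$ is infinite-dimensional and has no finite-dimensional direct summand, so that $I_u$ is faithful and the right multiplication map $\Pi\to \End_\Pi(I_u)$ is injective. Surjectivity follows by analysing the short exact sequence $0\to I_u\to\Pi\to\Pi/I_u\to 0$: applying $\Hom_\Pi(-,I_u)$ and using that $\Pi/I_u$ is finite-dimensional and supported at $u$, one computes $\Ext^1_\Pi(\Pi/I_u,I_u)$ and concludes. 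For the inductive step, if $w=w's_u$ with $\ell(w)=\ell(w')+1$, one uses $I_w = I_{w'}I_u$ and applies the induction hypothesis to identify $\End_\Pi(I_w)$ with $\End_\Pi(I_{w'})\simeq \Pi$ by composing isomorphisms induced by right multiplication.

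From (a), part (b) follows quickly. The short exact sequence $0\to I_w\to \Pi\to \Pi_w\to 0$ combined with the tilting-like property of $I_w$ as a $\Pi$-module (projective dimension at most one, obtained from (a)) gives that $\Pi_w$ has projective dimension $\leq 1$ as a $\Pi$-module. Applying $\Hom_\Pi(-,\Pi_w)$ and using the $\Pi$-$\Pi$-bimodule duality of $\Pi$, one deduces that the injective dimension of $\Pi_w$ over itself is at most one on both sides. Part (c) is then a general consequence: for an Iwanaga-Gorenstein algebra of injective dimension at most one, $\Sub\Pi_w$ coincides with the category of Gorenstein projective modules, which is Frobenius with projective-injective objects exactly $\add \Pi_w$.

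For (d), I would transfer the bimodule $2$-Calabi-Yau property of $\Pi$ (the existence of a self-dual bimodule resolution of length two) to $\Pi_w$-modules via restriction along $\Pi\to\Pi_w$. Using (a) one shows that the syzygy functor $\Omega$ on $\Sub\Pi_w$ agrees up to shift with a functor defined through $\Pi$, and the Serre duality of $\Pi$ induces the required functorial isomorphism $\underline{\Hom}(X,Y)\simeq \kD\underline{\Hom}(Y,X[2])$. Finally for (e), another induction on $\ell(w)$ shows that each $\Pi_{u_1\cdots u_i}$ belongs to $\Sub\Pi_w$ (because $I_w\subseteq I_{u_1\cdots u_i}$), and BIRS-mutation sequences at the vertices $u_i$ yield both the Ext-orthogonality of the summands in the stable category and the maximality property characterising cluster tilting objects.

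The hardest step is clearly (d): while (a)--(c) and the Ext-vanishing in (e) are essentially algebraic consequences of the factorisation of $I_w$, the $2$-Calabi-Yau property requires the global bimodule symmetry of $\Pi$ and a careful identification of dualities under the passage to the stable category of $\Sub\Pi_w$. The maximality part of (e) is also delicate, as it requires controlling all rigid objects of $\underline{\Sub}\Pi_w$ simultaneously.
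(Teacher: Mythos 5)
The paper itself offers no proof of this proposition: it is imported wholesale from \cite{BIRSc}, so the only ``proof'' in the text is the citation, and your sketch has to be judged as a reconstruction of the arguments of that paper. As such it has genuine gaps, the most serious being the inductive step of (a). From $I_w=I_{w'}I_u$ there is no formal reason why $\End_{\Pi}(I_{w'}I_u)$ should again be $\Pi$; ``composing isomorphisms induced by right multiplication'' is an assertion, not an argument. This is precisely where \cite{BIRSc} invest their work: each $I_u$ is shown to be a tilting module of projective dimension at most one, products along a \emph{reduced} expression are shown to remain tilting (reducedness enters here), and one needs $\Hom_{\Pi}(S,\Pi)=0=\Ext^1_{\Pi}(S,\Pi)$ for finite-dimensional $S$ so that every endomorphism of the ideal extends to $\Pi$. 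Your base case also leans on the wrong vanishing: $\Ext^1_{\Pi}(\Pi/I_u,I_u)$ is not zero (it is one-dimensional in the non-Dynkin case), so ``one computes $\Ext^1_{\Pi}(\Pi/I_u,I_u)$ and concludes'' does not by itself give surjectivity of $\Pi\to\End_{\Pi}(I_u)$; the input actually used is $\Ext^1_{\Pi}(\Pi/I_u,\Pi)=0$, together with vanishing of the socle of $\Pi$ for injectivity.

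Second, (b)--(e) are asserted for every acyclic $Q$, including Dynkin type, while your whole chain of deductions starts from (a), which holds only in the non-Dynkin case; the Dynkin case is simply not covered by your plan. Within (b), finite-dimensionality of $\Pi_w$ --- a nontrivial statement when $\Pi$ is infinite-dimensional --- is never addressed, and note that projective dimension $\leq 1$ for $I_w$ only gives projective dimension $\leq 2$ for $\Pi_w$ over $\Pi$, so the Gorenstein bound $\injdim\Pi_w\leq1$ still requires an argument rather than following ``quickly''. In (e), the containment $I_w\subseteq I_{u_1\cdots u_i}$ only makes $\Pi_{u_1\cdots u_i}$ a quotient of $\Pi_w$; membership in $\Sub\Pi_w$ means embedding into a free $\Pi_w$-module and must be checked (in \cite{BIRSc} via the Ext-vanishing characterisation and an induction along the reduced expression). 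Finally, (d) and the maximality half of (e) are statements of intent: ``the syzygy functor agrees up to shift with a functor defined through $\Pi$'' is not an argument, and appealing to ``BIRS-mutation sequences'' is close to circular, since that mutation theory presupposes the cluster-tilting property you are trying to establish. In \cite{BIRSc} the Frobenius and stably $2$-Calabi-Yau statements are instead deduced from a general criterion for functorially finite extension-closed subcategories, and the maximality in (e) is a separate induction along the reduced expression; none of this is reconstructed in the sketch.
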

Next we introduce the grading of a preprojective algebra.
We regard the path algebra $k\overline{Q}$ as a graded algebra by the following grading:
	\begin{align}
	\Deg \beta = \begin{cases}
				1 & \text{$\beta = \alpha^{\ast},\alpha \in Q_1$} \\
				0 & \text{$\beta = \alpha , \alpha \in Q_1.$}
			\end{cases}\notag
	\end{align}
Since the element $\sum\limits_{\alpha\in Q_1}(\alpha\alpha^{\ast}-\alpha^{\ast}\alpha)$ in $k\overline{Q}$ is homogeneous of degree $1$, the grading of $k\overline{Q}$ naturally gives a grading on the preprojective algebra $\Pi=\bigoplus\limits_{i\geq 0}\Pi_i$.
A $\mathbb{Z}$-algebra $A$ is said to be {\it positively graded} if $A_{i}=0$ for any $i< 0$.
Preprojective algebras are positively graded with respect to the above grading.
\begin{remark}
\begin{itemize}
\item[(a)] We have $\Pi_0=kQ$, since $\Pi_0$ is spanned by all paths of degree $0$.
\item[(b)] For any $w\in W$, the ideal $I_w$ of $\Pi$ is a homogeneous ideal of $\Pi$ since so is each $I_u$.
\item[(c)] In particular, the factor algebra $\Pi_w$ is a graded algebra.
\end{itemize}
\end{remark}
Let $X=\bigoplus_{i\in\mathbb{Z}}X_i$ be a graded module  over a positively graded algebra.
For any integer $j$, we define the shifted graded module $X(j)$ by $(X(j))_i=X_{i+j}$.
Moreover, for any integer $j$, we define a graded submodule $X_{\geq j}$ of $X$ by
	\begin{align}
	(X_{\geq j})_i=\begin{cases}
					X_i & \text{$i\geq j$} \\
					0 & \text{else}
					\end{cases} \notag
	\end{align}
and define a graded factor module $X_{\leq j}$ of $X$ by $X_{\leq j}=X/(X_{\geq {j+1}})$.
For $i,j\in\mathbb{Z}$, let $X_{[i,j]}=(X_{\leq j})_{\geq i}$.

We denote by $\proj^{\mathbb{Z}}\Pi_{w}$ the full subcategory of $\mod^{\mathbb{Z}}\Pi_{w}$ of graded projective $\Pi_{w}$-modules.
Let $\Sub^{\mathbb{Z}}\Pi_w$ be the full subcategory of $\mod^{\mathbb{Z}}\Pi_w$ of submodules of graded free $\Pi_w$-modules, that is,
 	\begin{align}
	\Sub^{\mathbb{Z}}\Pi_w=\biggl\{ X\in\mod^{\mathbb{Z}}\Pi_w\mid X \hspace{0.2cm} \textnormal{is a submodule of} \hspace{0.2cm} \bigoplus\limits_{i=1}^m\Pi_w(j_i),\,\, m,j_i\in\mathbb{Z}, m\geq 0 \biggr\}. \notag
	\end{align}
We have the degree forgetful functor $\rho : \mod^{\mathbb{Z}}\Pi_{w}\to\mod\Pi_{w}$.
Then we have the following equalities.
\begin{align}
\Sub^{\mathbb{Z}}\Pi_{w}&=\biggl\{ X\in\mod^{\mathbb{Z}}\Pi_{w} \mid \rho(X)\in\Sub\,\Pi_{w} \biggl\},\label{subz=sub}\\
&=\biggl\{ X\in\mod^{\mathbb{Z}}\Pi_{w} \mid \Ext_{\Pi_{w}}^{>0}(\rho(X), \Pi_{w})=0,  \forall i>0 \biggl\},\notag\\
&=\biggl\{ X\in\mod^{\mathbb{Z}}\Pi_{w} \mid \Ext_{\mod^{\mathbb{Z}}\Pi_{w}}^{>0}(X, \Pi_{w}(i))=0,  \forall i\in\mathbb{Z} \biggl\}.\label{subz=ext}
\end{align}
Clearly $\Sub^{\mathbb{Z}}\Pi_{w}$ has enough projectives and is closed under direct summands.
By (\ref{subz=ext}), $\Sub^{\mathbb{Z}}\Pi_{w}$ is closed under extensions.
For any $X\in\Sub^{\mathbb{Z}}\Pi_{w}$, there exists an injective left ($\proj^{\mathbb{Z}}\Pi_{w}$)-approximations of $X$.
Thus $\Sub^{\mathbb{Z}}\Pi_{w}$ has enough injectives by (\ref{subz=ext}).
It is easy to see that the projective objects and the injective objects of $\Sub^{\mathbb{Z}}\Pi_{w}$ coincide and equals to $\proj^{\mathbb{Z}}\Pi_{w}$.
Therefore $\Sub^{\mathbb{Z}}\Pi_w$ is a Frobenius category.
We have a triangulated category $\underline{\Sub}^{\mathbb{Z}}\Pi_w$.
In this paper, we get a tilting object in this category.

We give one example which illustrates grading on the algebra $\Pi_{w}$ when $w$ is $c$-sortable.
\begin{example}
Let $Q$ be a quiver
	\begin{xy} (0,0)="O",
	"O"+<0cm,0.35cm>="11"*{1},
	"11"+<-0.8cm,-0.7cm>="21"*{2},
	"21"+/r1.6cm/="22"*{3},
	
	\ar"11"+/dl/;"21"+/u/
	\ar"21"+/r/;"22"+/l/
	\ar"11"+/dr/;"22"+/u/
	\end{xy}.
Then we have a graded algebra $\Pi=\Pi e_1\oplus\Pi e_2\oplus\Pi e_3$, and these are represented by their radical filtrations, which correspond to the horizontal layers of simples, as follows:
	$$
	\Pi e_{1}=\begin{xy} (0,0)="O",
	"O"+<0cm,1.2cm>="11"*{\bf{1}},
	"11"+<-0.3cm,-0.5cm>="21"*{2},
	"21"+<-0.3cm,-0.5cm>="31"*{3},
	"31"+<-0.3cm,-0.5cm>="41"*{1},
	"41"+<-0.3cm,-0.5cm>="51"*{2},
	"51"+<-0.3cm,-0.5cm>="61"*{3},
	"61"+<-0.3cm,-0.5cm>="71",
	"21"+/r0.6cm/="22"*{3},
	"31"+/r0.6cm/="32"*{1},
	"32"+/r0.6cm/="33"*{2},
	"41"+/r0.6cm/="42"*{2},
	"42"+/r0.6cm/="43"*{3},
	"43"+/r0.6cm/="44"*{1},
	"51"+/r0.6cm/="52"*{3},
	"52"+/r0.6cm/="53"*{1},
	"53"+/r0.6cm/="54"*{2},
	"54"+/r0.6cm/="55"*{3},
	"61"+/r0.6cm/="62"*{1},
	"62"+/r0.6cm/="63"*{2},
	"63"+/r0.6cm/="64"*{3},
	"64"+/r0.6cm/="65"*{1},
	"65"+/r0.6cm/="66"*{2},
	"71"+/r0.6cm/="72",
	"72"+/r0.6cm/="73",
	"73"+/r0.6cm/="74",
	"74"+/r0.6cm/="75",
	"75"+/r0.6cm/="76",
	"76"+/r0.6cm/="77",
	
	\ar@{-}"21"+/dr/;"32"+/u/<-2pt>
	\ar@{-}"22"+/dl/;"32"+/u/<2pt>
	\ar@{-}"22"+/dr/;"33"+/u/<-2pt>
	\ar@{-}"31"+/dl/;"41"+/u/<2pt>
	\ar@{-}"31"+/dr/;"42"+/u/<-2pt>
	\ar@{-}"33"+/dr/;"44"+/u/<-2pt>
	\ar@{-}"42"+/dr/;"53"+/u/<-2pt>
	\ar@{-}"43"+/dl/;"53"+/u/<2pt>
	\ar@{-}"43"+/dr/;"54"+/u/<-2pt>
	\ar@{-}"51"+/dr/;"62"+/u/<-2pt>
	\ar@{-}"52"+/dl/;"62"+/u/<2pt>
	\ar@{-}"52"+/dr/;"63"+/u/<-2pt>
	\ar@{-}"54"+/dr/;"65"+/u/<-2pt>
	\ar@{-}"55"+/dl/;"65"+/u/<2pt>
	\ar@{-}"55"+/dr/;"66"+/u/<-2pt>
	\ar@{-}"61"+/dl/;"71"+/u/<2pt>
	\ar@{-}"61"+/dr/;"72"+/u/<-2pt>
	\ar@{-}"63"+/dr/;"74"+/u/<-2pt>
	\ar@{-}"64"+/dl/;"74"+/u/<2pt>
	\ar@{-}"64"+/dr/;"75"+/u/<-2pt>
	\ar@{-}"66"+/dr/;"77"+/u/<-2pt>
	\end{xy},
\qquad
	\Pi e_{2}=\begin{xy} (0,0)="O",
	"O"+<0cm,1.2cm>="11"*{{\bf 2}},
	"11"+<-0.3cm,-0.5cm>="21"*{3},
	"21"+<-0.3cm,-0.5cm>="31"*{1},
	"31"+<-0.3cm,-0.5cm>="41"*{2},
	"41"+<-0.3cm,-0.5cm>="51"*{3},
	"51"+<-0.3cm,-0.5cm>="61"*{1},
	"61"+<-0.3cm,-0.5cm>="71",
	"21"+/r0.6cm/="22"*{{\bf 1}},
	"31"+/r0.6cm/="32"*{2},
	"32"+/r0.6cm/="33"*{3},
	"41"+/r0.6cm/="42"*{3},
	"42"+/r0.6cm/="43"*{1},
	"43"+/r0.6cm/="44"*{2},
	"51"+/r0.6cm/="52"*{1},
	"52"+/r0.6cm/="53"*{2},
	"53"+/r0.6cm/="54"*{3},
	"54"+/r0.6cm/="55"*{1},
	"61"+/r0.6cm/="62"*{2},
	"62"+/r0.6cm/="63"*{3},
	"63"+/r0.6cm/="64"*{1},
	"64"+/r0.6cm/="65"*{2},
	"65"+/r0.6cm/="66"*{3},
	"71"+/r0.6cm/="72",
	"72"+/r0.6cm/="73",
	"73"+/r0.6cm/="74",
	"74"+/r0.6cm/="75",
	"75"+/r0.6cm/="76",
	"76"+/r0.6cm/="77",
	
	\ar@{-}"11"+/dr/;"22"+/u/<-2pt>
	\ar@{-}"21"+/dl/;"31"+/u/<2pt>
	\ar@{-}"21"+/dr/;"32"+/u/<-2pt>
	\ar@{-}"32"+/dr/;"43"+/u/<-2pt>
	\ar@{-}"33"+/dl/;"43"+/u/<2pt>
	\ar@{-}"33"+/dr/;"44"+/u/<-2pt>
	\ar@{-}"41"+/dr/;"52"+/u/<-2pt>
	\ar@{-}"42"+/dl/;"52"+/u/<2pt>
	\ar@{-}"42"+/dr/;"53"+/u/<-2pt>
	\ar@{-}"44"+/dr/;"55"+/u/<-2pt>
	\ar@{-}"51"+/dl/;"61"+/u/<2pt>
	\ar@{-}"51"+/dr/;"62"+/u/<-2pt>
	\ar@{-}"53"+/dr/;"64"+/u/<-2pt>
	\ar@{-}"54"+/dl/;"64"+/u/<2pt>
	\ar@{-}"54"+/dr/;"65"+/u/<-2pt>
	\ar@{-}"62"+/dr/;"73"+/u/<-2pt>
	\ar@{-}"63"+/dl/;"73"+/u/<2pt>
	\ar@{-}"63"+/dr/;"74"+/u/<-2pt>
	\ar@{-}"65"+/dr/;"76"+/u/<-2pt>
	\ar@{-}"66"+/dl/;"76"+/u/<2pt>
	\ar@{-}"66"+/dr/;"77"+/u/<-2pt>
	\end{xy},
\qquad
	\Pi e_{3}=\begin{xy} (0,0)="O",
	"O"+<0cm,1.2cm>="11"*{{\bf 3}},
	"11"+<-0.3cm,-0.5cm>="21"*{{\bf 1}},
	"21"+<-0.3cm,-0.5cm>="31"*{2},
	"31"+<-0.3cm,-0.5cm>="41"*{3},
	"41"+<-0.3cm,-0.5cm>="51"*{1},
	"51"+<-0.3cm,-0.5cm>="61"*{2},
	"61"+<-0.3cm,-0.5cm>="71",
	"21"+/r0.6cm/="22"*{{\bf 2}},
	"31"+/r0.6cm/="32"*{3},
	"32"+/r0.6cm/="33"*{{\bf 1}},
	"41"+/r0.6cm/="42"*{1},
	"42"+/r0.6cm/="43"*{2},
	"43"+/r0.6cm/="44"*{3},
	"51"+/r0.6cm/="52"*{2},
	"52"+/r0.6cm/="53"*{3},
	"53"+/r0.6cm/="54"*{1},
	"54"+/r0.6cm/="55"*{2},
	"61"+/r0.6cm/="62"*{3},
	"62"+/r0.6cm/="63"*{1},
	"63"+/r0.6cm/="64"*{2},
	"64"+/r0.6cm/="65"*{3},
	"65"+/r0.6cm/="66"*{1},
	"71"+/r0.6cm/="72",
	"72"+/r0.6cm/="73",
	"73"+/r0.6cm/="74",
	"74"+/r0.6cm/="75",
	"75"+/r0.6cm/="76",
	"76"+/r0.6cm/="77",
	
	\ar@{-}"11"+/dl/;"21"+/u/<2pt>
	\ar@{-}"11"+/dr/;"22"+/u/<-2pt>
	\ar@{-}"22"+/dr/;"33"+/u/<-2pt>
	\ar@{-}"31"+/dr/;"42"+/u/<-2pt>
	\ar@{-}"32"+/dl/;"42"+/u/<2pt>
	\ar@{-}"32"+/dr/;"43"+/u/<-2pt>
	\ar@{-}"41"+/dl/;"51"+/u/<2pt>
	\ar@{-}"41"+/dr/;"52"+/u/<-2pt>
	\ar@{-}"43"+/dr/;"54"+/u/<-2pt>
	\ar@{-}"44"+/dl/;"54"+/u/<2pt>
	\ar@{-}"44"+/dr/;"55"+/u/<-2pt>
	\ar@{-}"52"+/dr/;"63"+/u/<-2pt>
	\ar@{-}"53"+/dl/;"63"+/u/<2pt>
	\ar@{-}"53"+/dr/;"64"+/u/<-2pt>
	\ar@{-}"55"+/dr/;"66"+/u/<-2pt>
	\ar@{-}"61"+/dr/;"72"+/u/<-2pt>
	\ar@{-}"62"+/dl/;"72"+/u/<2pt>
	\ar@{-}"62"+/dr/;"73"+/u/<-2pt>
	\ar@{-}"64"+/dr/;"75"+/u/<-2pt>
	\ar@{-}"65"+/dl/;"75"+/u/<2pt>
	\ar@{-}"65"+/dr/;"76"+/u/<-2pt>
	\end{xy},
	$$
where numbers connected by solid lines are in the same degree, 
the tops of the $\Pi e_i$ are concentrated in degree $0$,
and the degree zero parts are denoted by bold numbers.

Let $w=s_1s_2s_3s_1s_2s_1$.
This is a $c$-sortable element, where $c=c^{(0)}=s_1s_2s_3$, $c^{(1)}=s_1s_2$, and $c^{(3)}=s_{1}$.
Then we have a graded algebra, $\Pi_w=\Pi_we_1\oplus\Pi_we_2\oplus\Pi_we_3$, where
	$$
	\Pi_{w} e_{1}=\begin{xy} (0,0)="O",
	"O"+<0cm,1cm>="11"*{{\bf 1}},
	"11"+<-0.3cm,-0.5cm>="21"*{2},
	"21"+<-0.3cm,-0.5cm>="31"*{3},
	"31"+<-0.3cm,-0.5cm>="41"*{1},
	"21"+/r0.6cm/="22"*{3},
	"31"+/r0.6cm/="32"*{1},
	"32"+/r0.6cm/="33"*{2},
	"41"+/r0.6cm/="42",
	"42"+/r0.6cm/="43",
	"43"+/r0.6cm/="44"*{1},
	\ar@{-}"21"+/dr/;"32"+/u/<-2pt>
	\ar@{-}"22"+/dl/;"32"+/u/<2pt>
	\ar@{-}"22"+/dr/;"33"+/u/<-2pt>
	\ar@{-}"31"+/dl/;"41"+/u/<2pt>
	\ar@{-}"33"+/dr/;"44"+/u/<-2pt>
	\end{xy},
\qquad
	\Pi_{w}e_{2}=\begin{xy} (0,0)="O",
	"O"+<0cm,1cm>="11"*{{\bf 2}},
	"11"+<-0.3cm,-0.5cm>="21"*{3},
	"21"+<-0.3cm,-0.5cm>="31"*{1},
	"31"+<-0.3cm,-0.5cm>="41",
	"41"+<-0.3cm,-0.5cm>="51",
	"21"+/r0.6cm/="22"*{{\bf 1}},
	"31"+/r0.6cm/="32"*{2},
	"32"+/r0.6cm/="33"*{3},
	"41"+/r0.6cm/="42",
	"42"+/r0.6cm/="43"*{1},
	"43"+/r0.6cm/="44"*{2},
	"51"+/r0.6cm/="52",
	"52"+/r0.6cm/="53",
	"53"+/r0.6cm/="54",
	"54"+/r0.6cm/="55"*{1},
	\ar@{-}"11"+/dr/;"22"+/u/<-2pt>
	\ar@{-}"21"+/dl/;"31"+/u/<2pt>
	\ar@{-}"21"+/dr/;"32"+/u/<-2pt>
	\ar@{-}"32"+/dr/;"43"+/u/<-2pt>
	\ar@{-}"33"+/dl/;"43"+/u/<2pt>
	\ar@{-}"33"+/dr/;"44"+/u/<-2pt>
	\ar@{-}"44"+/dr/;"55"+/u/<-2pt>
	\end{xy},
\qquad
	\Pi_{w}e_{3}=\begin{xy} (0,0)="O",
	"O"+<0cm,0.7cm>="11"*{{\bf 3}},
	"11"+<-0.3cm,-0.5cm>="21"*{{\bf 1}},
	"21"+<-0.3cm,-0.5cm>="31",
	"21"+/r0.6cm/="22"*{{\bf 2}},
	"31"+/r0.6cm/="32",
	"32"+/r0.6cm/="33"*{{\bf 1}},
	\ar@{-}"11"+/dl/;"21"+/u/<2pt>
	\ar@{-}"11"+/dr/;"22"+/u/<-2pt>
	\ar@{-}"22"+/dr/;"33"+/u/<-2pt>
	\end{xy}.
	$$
\end{example}
\section{Graded structure of $I_w$ and $\Pi_w$}\label{gradedstrof}
In this section, we prove some basic properties of gradings of $I_w$ and $\Pi_w$.
The main result in this section is Proposition \ref{pii}.
We also recall some results from \cite{AIRT} which will be used later.
Let $c=s_{u_1}s_{u_2}\cdots s_{u_n}$ be a Coxeter element in $W=W_{Q}$ satisfying $e_{u_j}(kQ)e_{u_i}=0$ for $i < j$.
\begin{lemma}\cite[Lemma 2.1]{AIRT}\label{airtlem2.1}
Let $Q^{\prime}$ be a full subquiver of $Q$ and  $w$ an element in $W_{Q^{\prime}}\subset W$.
Then we have $\Pi/I_{w} = \Pi^{\prime}/I_{w}^{\prime}$ as graded algebras,
where $\Pi^{\prime}$ is a preprojective algebra of $Q^{\prime}$ and $I^{\prime}_{w}$ is the ideal of $\Pi^{\prime}$ associated with $w$.
\end{lemma}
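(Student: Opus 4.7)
The plan is to exhibit an isomorphism $\Pi/I_w \cong \Pi'/I_w'$ by constructing mutually inverse graded algebra homomorphisms and reducing everything to one key claim: for every vertex $u \in Q_0 \setminus Q_0'$, the idempotent $e_u$ lies in $I_w$.

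First I would establish the key claim. Since $w \in W_{Q'}$, any reduced expression $w = s_{u_1}s_{u_2}\cdots s_{u_l}$ has $\mathrm{supp}(w) \subset Q_0'$, so each $u \in Q_0 \setminus Q_0'$ satisfies $u \neq u_i$ for every $i$. For such $u$, one has $e_u e_{u_i} e_u = 0$, hence
\[
e_u = e_u(1-e_{u_i})e_u \in e_u I_{u_i} e_u \subset I_{u_i}.
\]
Multiplying $l$ copies of this identity gives
\[
e_u = \bigl[e_u(1-e_{u_1})e_u\bigr]\bigl[e_u(1-e_{u_2})e_u\bigr]\cdots\bigl[e_u(1-e_{u_l})e_u\bigr] \in I_{u_1}I_{u_2}\cdots I_{u_l} = I_w,
\]
so $e_u \equiv 0$ in $\Pi/I_w$. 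As a consequence, for every arrow $\beta \in \overline{Q}_1$ that is not in $\overline{Q'}_1$, we have $\beta \in I_w$ because at least one of its endpoints lies outside $Q_0'$ (here I use that $Q'$ is a \emph{full} subquiver).

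Next I would define $\varphi: \Pi/I_w \to \Pi'/I_w'$ by sending $e_v \mapsto e_v$ if $v \in Q_0'$ and $e_v \mapsto 0$ otherwise, and $\beta \mapsto \beta$ if $\beta \in \overline{Q'}_1$ and $\beta \mapsto 0$ otherwise. This is well-defined on $k\overline{Q}$, sends the preprojective relation $\sum_{\alpha \in Q_1}[\alpha,\alpha^*]$ to $\sum_{\alpha \in Q_1'}[\alpha,\alpha^*]$ (the remaining terms map to $0$) which vanishes in $\Pi'/I_w'$, and sends each $I_{u_i}$ into $I_{u_i}'$. In the opposite direction, I define $\psi : \Pi'/I_w' \to \Pi/I_w$ on generators by $e_v \mapsto e_v$ and $\beta \mapsto \beta$. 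Well-definedness requires two checks: the relation $\sum_{\alpha \in Q_1'}[\alpha,\alpha^*]$ lies in $I_w$, which follows from the full preprojective relation $\sum_{\alpha \in Q_1}[\alpha,\alpha^*] = 0$ in $\Pi$ together with the observation above that every $[\alpha,\alpha^*]$ with $\alpha \in Q_1 \setminus Q_1'$ belongs to $I_w$; and $\psi(I_w') \subset I_w$, which holds because $\psi(1-e_{u_i}^{\Pi'}) = \sum_{v \in Q_0',\, v \neq u_i} e_v \equiv 1 - e_{u_i}^{\Pi} \pmod{I_w}$, using again that $e_v \in I_w$ for $v \notin Q_0'$.

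Finally I would verify that $\varphi$ and $\psi$ are mutually inverse: $\varphi\psi$ is clearly the identity on the generators of $\Pi'/I_w'$, and $\psi\varphi$ fixes each $e_v$ with $v \in Q_0'$ and each $\beta \in \overline{Q'}_1$ while annihilating the $e_v$ and $\beta$ with endpoints outside $Q_0'$, which already vanish in $\Pi/I_w$. Since all maps send generators to generators of the same degree (vertices and original arrows in degree $0$, starred arrows in degree $1$), the isomorphism is graded. The main conceptual obstacle is the key claim in the first paragraph; once $e_u \in I_w$ for $u \notin Q_0'$ is in hand, the rest is a routine comparison of presentations.
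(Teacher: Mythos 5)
Your proof is correct. The paper gives no argument for this lemma---it is simply quoted from \cite[Lemma 2.1]{AIRT}---so there is nothing in-paper to compare against; your argument is the standard one, and its crux is exactly right: $e_u\in I_w$ for every $u\in Q_0\setminus Q_0'$ (since $e_u=e_u(1-e_{u_i})e_u\in I_{u_i}$ for each letter $u_i$, all of which lie in $Q_0'$), after which both sides are presented by the quiver $\overline{Q'}$ with the preprojective relation and the images of the ideals $I_{u_i}$, and your maps $\varphi$, $\psi$ are easily seen to be well-defined, graded and mutually inverse. The only ingredients you use implicitly are standard and consistent with the paper's setup: that reduced expressions of $w\in W_{Q'}$ involve only letters from $Q_0'$ (a parabolic-subgroup fact, in line with the paper's use of $\supp(w)$), and that $I_w$ does not depend on the chosen reduced expression, which the paper records from \cite{BIRSc}.
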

We first calculate the ideal $I_{w}$ and the algebra $\Pi_{w}$ when $w$ is a subsequence of a Coxeter element.
\begin{lemma}\label{iw}
Let $w\in W$ be a subsequence of a Coxeter element and $Q^{\prime}$ the full subquiver of $Q$ whose set of vertices is $\supp(w)$.
We denote by $\Pi^{\prime}$ the preprojective algebra of $Q^{\prime}$ and $I^{\prime}_{w}$ the ideal of $\Pi^{\prime}$ associated with $w$.
Then the following holds.
\begin{itemize}
\item[(a)]
We have $\Pi/I_{w} = \Pi^{\prime}/I_{w}^{\prime}= kQ^{\prime}$.
\item[(b)]
$(I_w)_{\geq 1}=\Pi_{\geq 1}$.
\item[(c)]
$(I_{w})_{0}$ is the ideal of $kQ$ generated by idempotents $\{e_{u}\mid u\in Q_0\setminus \supp(w)\}$.
\end{itemize}
\end{lemma}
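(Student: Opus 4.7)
The plan is to reduce to the case $Q = Q'$ and $w = c'$, the Coxeter element of $Q'$, and then to prove $\Pi'/I'_{c'} = kQ'$ as graded algebras. Since the letters $v_1, \ldots, v_l$ of $w$ (in the order inherited from $c$) are distinct and exhaust $Q'_0$, $w$ is precisely the Coxeter element of $Q'$ in the sense of the paper. The first equality in (a) is immediate from Lemma~\ref{airtlem2.1}, so the content lies in showing $\Pi'/I'_{c'} = kQ'$. Once this is proved, (a) follows; then (b) holds because $kQ'$ is concentrated in degree zero, while (c) holds because the degree-zero part of $\Pi/I_w = kQ'$ identifies $(I_w)_0$ with the kernel of the canonical surjection $kQ \twoheadrightarrow kQ'$, namely the ideal generated by $\{e_u \mid u \in Q_0 \setminus \supp(w)\}$.

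I split the identification $\Pi'/I'_{c'} = kQ'$ into two claims. First, $(I'_{c'})_0 = 0$ in $\Pi'_0 = kQ'$: each $I'_{v_i}$ is non-negatively graded with degree-zero part $J_{v_i} := kQ'(1-e_{v_i})kQ'$, so $(I'_{c'})_0 = J_{v_1}\cdots J_{v_l}$, and I plan to show this product vanishes by a combinatorial argument. A typical basis element is a chain of paths $x_1 \cdots x_l$ with $x_i \in J_{v_i}$; writing $a_0, \ldots, a_l$ for the vertices of $Q'$ with $x_i : a_{i-1} \to a_i$, define $g(i)$ to be the position of $a_i$ in $c'$, giving a function $\{0,\ldots,l\} \to \{1,\ldots,l\}$. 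The Coxeter condition $e_{v_j}(kQ')e_{v_i} = 0$ for $i<j$ forces $g$ to be weakly increasing, and an equality $g(i) = g(i-1)$ forces $x_i = e_{a_i}$ with $a_i \neq v_i$, i.e.\ $g(i) \neq i$. The monovariant $D(i) := i - g(i)$ satisfies $D(0) \leq -1$, $D(l) \geq 0$, and $D(i) - D(i-1) \leq 1$ with equality iff the step is stationary; at the smallest $i^*$ with $D(i^*) \geq 0$, this step must be a rise, hence stationary, and $D(i^*) = 0$, giving $g(i^*) = i^*$—contradicting $g(i^*) \neq i^*$.

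Second, $\Pi'_{\geq 1} \subseteq I'_{c'}$: since $\Pi'$ is generated in degrees zero and one and $I'_{c'}$ is a two-sided ideal, it suffices to show $\alpha^* \in I'_{c'}$ for every arrow $\alpha : s \to t$ of $Q'$. Let $m_s, m_t$ denote the positions of $s, t$ in $c'$; the existence of the path $\alpha$ forces $m_s < m_t$ by the Coxeter condition. The factorization $\alpha^* = e_t \cdots e_t \cdot \alpha^* \cdot e_s \cdots e_s$ (with $m_s - 1$ copies of $e_t$ on the left and $l - m_s$ copies of $e_s$ on the right) then exhibits $\alpha^* \in I'_{v_1}\cdots I'_{v_l}$: for $k < m_s$, $e_t \in I'_{v_k}$ because $v_k \neq t$; for $k > m_s$, $e_s \in I'_{v_k}$ because $v_k \neq s$; and $\alpha^* = e_t\alpha^* \in \Pi' e_t \Pi' \subseteq I'_{v_{m_s}}$ using $t \neq s$.

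The main obstacle I expect is the combinatorial vanishing of the degree-zero product $J_{v_1}\cdots J_{v_l}$; the reduction via Lemma~\ref{airtlem2.1} and the degree-one factorization are essentially direct.
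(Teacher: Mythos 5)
Your proposal is correct, and it takes a genuinely different route at the key step. Both you and the paper first reduce to the full subquiver $Q'$ on $\supp(w)$ via Lemma \ref{airtlem2.1} and observe that $w$ becomes the admissibly ordered Coxeter element of $Q'$, and both then read off (b) and (c) from the graded identity $\Pi/I_w=kQ'$ in the same way (the quotient is concentrated in degree zero, and the degree-zero part of $I_w$ is the kernel of $kQ\twoheadrightarrow kQ'$). The difference is in how $\Pi'/I'_{c'}=kQ'$ is established: the paper simply cites \cite[Proposition III.3.2]{BIRSc}, whereas you reprove it from scratch, splitting it into the degree-zero vanishing $(I'_{c'})_0=J_{v_1}\cdots J_{v_l}=0$ (via the weakly increasing position function $g$ and the monovariant $D(i)=i-g(i)$, which correctly exploits the admissibility condition $e_{v_j}(kQ')e_{v_i}=0$ for $i<j$ and the fact that a stationary step forces a trivial path $e_{a_i}$ with $a_i\neq v_i$) and the containment $\Pi'_{\geq 1}\subseteq I'_{c'}$ (via the explicit factorization $\alpha^*=e_t^{\,m_s-1}\,\alpha^*\,e_s^{\,l-m_s}$, using $m_s<m_t$). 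I checked both arguments and they are sound, including the use of positive grading to get $(I'_{c'})_0=(I'_{v_1})_0\cdots(I'_{v_l})_0$ and the fact that $\Pi'_{\geq 1}$ is generated by the starred arrows. What each approach buys: the paper's proof is shorter and defers the content to a known result; yours is self-contained, makes visible exactly where the chosen ordering of the Coxeter element enters, and in effect reproves the cited BIRSc proposition in its graded form.
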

\begin{proof}
(\rm a)
By Lemma \ref{airtlem2.1}, we have $\Pi/I_{w} = \Pi^{\prime}/I_{w}^{\prime}$.
Since $w$ is a subsequence of a Coxeter element,
$w$ is a Coxeter element of $W_{Q^{\prime}}$.
Then, by \cite[Proposition I\hspace{-.1em}I\hspace{-.1em}I. 3.2]{BIRSc}, 
we have $\Pi^{\prime}/I_{w}^{\prime}= kQ^{\prime}$.

(\rm b)
By (a), we have $(\Pi/I_{w})_{0}=(\Pi^{\prime}/I_{w}^{\prime})_{0}= kQ^{\prime}$.
This means that $(I_w)_{\geq 1}=\Pi_{\geq 1}$.

(\rm c)
Since $kQ^{\prime}=(\Pi/I_{w})_{0}=\Pi_{0}/(I_{w})_{0}=kQ/(I_{w})_{0}$ holds, $(I_w)_{0}$ the ideal generated by the vertices in $Q_0\setminus \supp(w)$.
\end{proof}
Then we describe the grading of $I_{w}$ for a $c$-sortable element $w$.
\begin{lemma}\label{ic0}
Let $w=c^{(0)}c^{(1)}\cdots c^{(m)}\in W$ be a $c$-sortable element.
Then we have 
$(I_{c^{(i)}}I_{c^{(i+1)}})_0=(I_{c^{(i)}})_0$ for all $0 \leq i \leq m-1$.
\end{lemma}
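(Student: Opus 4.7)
The plan is to reduce the identity to an equality of ideals inside $kQ = \Pi_0$. The key observation is that for any two graded ideals $I, J$ of the positively graded algebra $\Pi$, one has $(IJ)_0 = I_0 J_0$: writing a degree-zero element of $IJ$ as a sum of products of homogeneous pieces $x_a y_b$ with $a, b \geq 0$, the positive grading forces $a = b = 0$ in any term contributing to degree zero. Applying this to $I = I_{c^{(i)}}$ and $J = I_{c^{(i+1)}}$ reduces the claim to the equality $(I_{c^{(i)}})_0 \cdot (I_{c^{(i+1)}})_0 = (I_{c^{(i)}})_0$ of ideals in $kQ$.

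Next, I would invoke Lemma \ref{iw}(c), which applies since each $c^{(j)}$ is a subsequence of the Coxeter element $c$. It identifies $J_j := (I_{c^{(j)}})_0$ with the two-sided ideal of $kQ$ generated by the idempotents $\{e_u \mid u \in Q_0 \setminus \supp(c^{(j)})\}$. The $c$-sortability of $w$ gives the support nesting $\supp(c^{(i+1)}) \subset \supp(c^{(i)})$, so every generating idempotent of $J_i$ also lies in $J_{i+1}$, and in particular $J_i \subset J_{i+1}$.

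Finally, I would verify $J_i J_{i+1} = J_i$. The inclusion $J_i J_{i+1} \subset J_i$ is immediate since $J_i$ is an ideal. For the reverse, a typical element of $J_i$ is a sum of expressions $p e_u q$ with $u \in Q_0 \setminus \supp(c^{(i)})$ and $p, q \in kQ$; since also $u \notin \supp(c^{(i+1)})$, we have $e_u \in J_{i+1}$, and the factorization $p e_u q = (p e_u)(e_u q)$ exhibits $p e_u q$ as an element of $J_i \cdot J_{i+1}$. No step is especially delicate; the main conceptual content is the passage $(IJ)_0 = I_0 J_0$ combined with the explicit description of the degree-zero parts from Lemma \ref{iw}, and the only point requiring attention is to distinguish the ideal product $I_{c^{(i)}} I_{c^{(i+1)}}$ appearing in the statement from the ideal $I_{c^{(i)} c^{(i+1)}}$ associated with the concatenated word.
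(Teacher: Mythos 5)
Your proposal is correct and follows essentially the same route as the paper: use positivity of the grading to get $(I_{c^{(i)}}I_{c^{(i+1)}})_0=(I_{c^{(i)}})_0(I_{c^{(i+1)}})_0$, then apply Lemma \ref{iw}(c) together with the support inclusion $\supp(c^{(i+1)})\subset\supp(c^{(i)})$ coming from $c$-sortability. Your final idempotent factorization $pe_uq=(pe_u)(e_uq)$ just spells out the last equality that the paper states without detail.
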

\begin{proof}
Since $\Pi_{w}$ is positively graded, we have $(I_{c^{(i)}}I_{c^{(i+1)}})_0=(I_{c^{(i)}})_0(I_{c^{(i+1)}})_0$.
By Lemma \ref{iw}, $(I_{c^{(i)}})_0$ and $(I_{c^{(i+1)}})_0$ are generated by idempotents $\{ e_{v} \mid v \in Q_0\setminus\supp(c^{(i)}) \}$ and $\{ e_{v} \mid v \in Q_0\setminus\supp(c^{(i+1)}) \}$, respectively.
Since $w$ is a $c$-sortable element, we have $\supp(c^{(i+1)})\subset \supp(c^{(i)})$.
Therefore we have $(I_{c^{(i)}})_0(I_{c^{(i+1)}})_0=(I_{c^{(i)}})_0$.
\end{proof}
\begin{lemma}\label{iwi}
For any $c$-sortable element $w=c^{(0)}c^{(1)}\cdots c^{(m)}\in W$, we have 
\begin{align*}
(I_{w})_i=\begin{cases}
			(I_{c^{(0)}c^{(1)}\cdots c^{(i)}})_i & 0\leq i\leq m.\\
			\Pi_{i} & m+1\leq i.
		\end{cases}
\end{align*}
In particular, we have $(\Pi/I_{w})_{\geq m+1}=0$.
\end{lemma}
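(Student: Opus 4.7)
The plan is to proceed by induction on $m$, the index of the final $c$-factor. The base case $m = 0$ is just Lemma \ref{iw}(b): $w = c^{(0)}$ is a subsequence of the Coxeter element, so $(I_w)_i = \Pi_i$ for all $i \ge 1$, while the degree zero statement is tautological.

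For the inductive step, set $v = c^{(1)} c^{(2)} \cdots c^{(m)}$, which is $c$-sortable with only $m$ blocks, and use $I_w = I_{c^{(0)}} \cdot I_v$ to write
\[
(I_w)_i = \sum_{a+b=i} (I_{c^{(0)}})_a (I_v)_b.
\]
For $i \ge m+1$, take $a=1$, $b=i-1 \ge m$: by Lemma \ref{iw}(b) and induction applied to $v$, this summand equals $\Pi_1 \cdot \Pi_{i-1}$. Since $\Pi$ is generated in degrees zero and one, $\Pi_1 \cdot \Pi_{i-1} = \Pi_i$, so $(I_w)_i = \Pi_i$.

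For $0 \le i \le m$, the inclusion $(I_w)_i \subseteq (I_{c^{(0)} \cdots c^{(i)}})_i$ follows from $I_w \subseteq I_{c^{(0)} \cdots c^{(i)}}$. For the reverse, expand
\[
(I_{c^{(0)} \cdots c^{(i)}})_i = \sum_{a+b=i} (I_{c^{(0)}})_a (I_{c^{(1)} \cdots c^{(i)}})_b.
\]
When $a \ge 1$, applying the inductive hypothesis separately to $v$ and to the $c$-sortable element $c^{(1)} \cdots c^{(i)}$ shows that both $(I_v)_b$ and $(I_{c^{(1)} \cdots c^{(i)}})_b$ coincide with $(I_{c^{(1)} \cdots c^{(b+1)}})_b$ in the range $0 \le b \le i-1$, so this summand sits inside $(I_{c^{(0)}})_a (I_v)_{i-a} \subseteq (I_w)_i$. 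The remaining $a=0$ summand is $(I_{c^{(0)}})_0 \cdot \Pi_i$, using induction on $c^{(1)} \cdots c^{(i)}$ at its boundary index to identify $(I_{c^{(1)} \cdots c^{(i)}})_i = \Pi_i$.

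The main obstacle is showing that this last summand $(I_{c^{(0)}})_0 \cdot \Pi_i$ lies in $(I_w)_i$, since it is not directly of the form $(I_{c^{(0)}})_a (I_v)_b$. The key point, where $c$-sortability is essential, is that $\supp(w) = \supp(c^{(0)})$, so that $(I_{c^{(0)}})_0 \subseteq (I_w)_0$. Indeed, for any $u \notin \supp(w)$ and any reduced expression $w = s_{u_1} \cdots s_{u_l}$, one has $e_u \in I_{u_j}$ for every $j$ (write $e_u = e_u(1-e_{u_j})e_u$), hence $e_u = e_u \cdot e_u \cdots e_u \in I_{u_1} I_{u_2} \cdots I_{u_l} = I_w$; since $(I_{c^{(0)}})_0$ is the $kQ$-ideal generated by these idempotents and $I_w$ is a two-sided ideal, the claim follows, and therefore $(I_{c^{(0)}})_0 \cdot \Pi_i \subseteq (I_w)_0 \cdot \Pi_i \subseteq (I_w)_i$. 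This closes the induction, and the ``in particular'' statement is the case $i \ge m+1$ of what has just been proved.
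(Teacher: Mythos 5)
Your argument is correct, and it is organized differently from the paper's. The paper proves the top range directly from the factorization $\Pi_{\geq m+1}=\prod_{j=0}^{m}\Pi_{\geq 1}=\prod_{j=0}^{m}(I_{c^{(j)}})_{\geq 1}$, and proves the low range by expanding $(I_{c^{(0)}\cdots c^{(m-1)}})_i$ into all $m$ graded factors, locating the largest index $j$ with $a_j=0$, and inserting the missing factor $(I_{c^{(j+1)}})_0$ via Lemma \ref{ic0} (which encodes the nested supports as $(I_{c^{(j)}})_0(I_{c^{(j+1)}})_0=(I_{c^{(j)}})_0$); it then removes blocks from the right ``repeatedly.'' You instead induct on the number of blocks by stripping $c^{(0)}$ from the left, splitting according to whether the $I_{c^{(0)}}$-factor contributes positive degree (handled by the inductive hypothesis applied to the shorter $c$-sortable elements $c^{(1)}\cdots c^{(m)}$ and $c^{(1)}\cdots c^{(i)}$, which are indeed reduced and $c$-sortable as consecutive subwords) or degree zero, where your key step is the containment $(I_{c^{(0)}})_0\subset (I_{w})_0$, proved by the elementary observation that $e_u=e_u(1-e_{u_j})e_u\in I_{u_j}$ for every $u\notin\supp(w)=\supp(c^{(0)})$, hence $e_u\in I_w$. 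This idempotent computation is exactly what Lemma \ref{ic0} packages (indeed it also follows by iterating that lemma), so $c$-sortability enters at the same structural point in both arguments; what your route buys is a cleaner induction that avoids the ``largest zero index'' bookkeeping and handles the degrees $\geq m+1$ uniformly within the same induction, at the cost of re-deriving the degree-zero absorption directly rather than citing Lemma \ref{ic0}. The minor points left implicit (that consecutive subwords of a reduced word are reduced, and that $\Pi_1\Pi_{i-1}=\Pi_i$ for $i\geq 1$ since $\Pi$ is generated over $\Pi_0$ in degree one) are at the same level of detail the paper itself assumes.
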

\begin{proof}
We first show that $(I_{w})_{\geq m+1}=\Pi_{\geq m+1}$.
Since $\Pi$ is generated by $\Pi_{1}$ as a $\Pi_{0}$-algebra, we have $\Pi_{\geq m+1}=\prod_{j=0}^{m}(\Pi_{\geq 1})$.
By Lemma \ref{iw} (b), the equation $\Pi_{\geq 1}=(I_{c^{(j)}})_{\geq 1}$ holds for any $0\leq j \leq m$.
Thus we have \[(I_{w})_{\geq m+1}\subset\Pi_{\geq m+1}=\prod_{j=0}^{m}\Pi_{\geq 1}=\prod_{j=0}^{m}(I_{c^{(j)}})_{\geq 1}\subset(I_{w})_{\geq m+1}.\]
Therefore we have $(I_{w})_{\geq m+1}=\Pi_{\geq m+1}$.

Assume that $0\leq i \leq m-1$.
We show that $(I_{w})_{i}=(I_{c^{(0)}c^{(1)}\cdots c^{(m-1)}})_{i}$.
Since $I_{w}\subset I_{c^{(0)}c^{(1)}\cdots c^{(m-1)}}$, 
we have $(I_{w})_i\subset (I_{c^{(0)}c^{(1)}\cdots c^{(m-1)}})_i$.
Conversely, we show that 
\[(I_{c^{(0)}c^{(1)}\cdots c^{(m-1)}})_i\subset (I_{w})_i.\]
In general, we have
\begin{align}\label{gradeiw}
(I_{c^{(0)}c^{(1)}\cdots c^{(m-1)}})_i=\sum_{b_0+b_1+\cdots +b_{m-1}=i}(I_{c^{(0)}})_{b_0}(I_{c^{(1)}})_{b_1}\cdots(I_{c^{(m-1)}})_{b_{m-1}}.
\end{align}
Since $(I_w)_i=0$ for any $i<0$ and (\ref{gradeiw}),
it is enough to show that 
\[(I_{c^{(0)}})_{a_0}(I_{c^{(1)}})_{a_1}\cdots(I_{c^{(m-1)}})_{a_{m-1}}\subset (I_w)_i,\]
for any non-negative integers $a_0,a_1,\ldots,a_{m-1}$ satisfying $\sum_{j=0}^{m-1}a_{j}=i$ .
Since $a_{0},\ldots,a_{m-1}$ are non-negative and $i\leq m-1$, at least one of them must be zero.
Let $j$ be the largest integer satisfying $a_{j}=0$.
Then we have
\begin{align*}
&(I_{c^{(0)}})_{a_0}\cdots(I_{c^{(j)}})_{a_{j}}(I_{c^{(j+1)}})_{a_{j+1}}\cdots(I_{c^{(m-1)}})_{a_{m-1}} \\
&=(I_{c^{(0)}})_{a_0}\cdots(I_{c^{(j)}})_{a_{j}}(\Pi_{a_{j+1}})\cdots(\Pi_{a_{m-1}})\\
&=(I_{c^{(0)}})_{a_0}\cdots(I_{c^{(j)}})_{a_{j}}(I_{c^{(j+1)}})_{0}(\Pi_{a_{j+1}})\cdots(\Pi_{a_{m-1}})\\
&=(I_{c^{(0)}})_{a_0}\cdots(I_{c^{(j)}})_{a_{j}}(I_{c^{(j+1)}})_{0}(I_{c^{(j+2)}})_{a_{j+1}}\cdots(I_{c^{(m)}})_{a_{m-1}}\\
&\subset(I_{w})_{i},
\end{align*}
where the first and the third equations come form Lemma \ref{iw} (b), and the second equation comes from Lemma \ref{ic0}.
Therefore we have $(I_{c^{(0)}c^{(1)}\cdots c^{(m-1)}})_i\subset (I_{w})_i$ for $0\leq i \leq m-1$.
By using this equation repeatedly, we have the assertion.
\end{proof}
Now we describe the grading of $\Pi_w$ for a $c$-sortable element $w$.
For an element $w$ in $W$,
let $Q^{(1)}$ be the full subquiver of $Q$ whose set of vertices is $\supp(w)$.
\begin{proposition}\label{pii}
Let $w=c^{(0)}c^{(1)}\cdots c^{(m)}\in W$ be a $c$-sortable element and $i\leq m$.
Then we have $(\Pi_w)_{\leq i}=(\Pi_{c^{(0)}c^{(1)}\cdots c^{(i)}})_{\leq i}=\Pi_{c^{(0)}c^{(1)}\cdots c^{(i)}}$.
In particular, we have $(\Pi_{w})_{0}=\Pi_{c^{(0)}}=kQ^{(1)}$.
\end{proposition}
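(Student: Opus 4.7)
The plan is to deduce the proposition directly from Lemma \ref{iwi}. Since both $I_w$ and $I_{c^{(0)}c^{(1)}\cdots c^{(i)}}$ are literal graded subspaces of the same algebra $\Pi$, the first asserted equality reduces to checking that these two ideals coincide in each degree $0,1,\ldots,i$, and the second equality reduces to checking that $\Pi_{c^{(0)}c^{(1)}\cdots c^{(i)}}$ vanishes in degrees $\geq i+1$.

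First I would note that for $i\leq m$ the prefix $w':=c^{(0)}c^{(1)}\cdots c^{(i)}$ is itself $c$-sortable, since the chain $\supp(c^{(i)})\subset\cdots\subset\supp(c^{(0)})$ is inherited from $w$ (Definition \ref{csortable}). Apply Lemma \ref{iwi} to $w$: for $0\leq j\leq m$ one has $(I_w)_j=(I_{c^{(0)}\cdots c^{(j)}})_j$. Apply the same lemma to $w'$, with the role of $m$ played by $i$: for $0\leq j\leq i$ one has $(I_{w'})_j=(I_{c^{(0)}\cdots c^{(j)}})_j$. Comparing these identities in the common range $0\leq j\leq i$ gives $(I_w)_j=(I_{w'})_j$, hence
\[
(\Pi_w)_j \;=\; \Pi_j/(I_w)_j \;=\; \Pi_j/(I_{w'})_j \;=\; (\Pi_{w'})_j,
\]
which yields the first equality $(\Pi_w)_{\leq i}=(\Pi_{w'})_{\leq i}$.

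For the second equality $(\Pi_{w'})_{\leq i}=\Pi_{w'}$, I would invoke Lemma \ref{iwi} once more for $w'$, now using its second case: $(I_{w'})_j=\Pi_j$ for $j\geq i+1$, so $(\Pi_{w'})_{\geq i+1}=0$ and the truncation is the identity. Finally, the \emph{in particular} clause is just the case $i=0$ combined with Lemma \ref{iw}(a): setting $i=0$ gives $(\Pi_w)_0=(\Pi_w)_{\leq 0}=\Pi_{c^{(0)}}$, and since $c^{(0)}$ is a subsequence of the Coxeter element $c$ with $\supp(c^{(0)})=\supp(w)$, the associated full subquiver in Lemma \ref{iw}(a) is exactly $Q^{(1)}$, so $\Pi_{c^{(0)}}=kQ^{(1)}$.

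I do not foresee any substantive obstacle; all the nontrivial content has been packaged into Lemma \ref{iwi} and Lemma \ref{iw}. The only points demanding care are the verification that every prefix of a $c$-sortable expression is again $c$-sortable (immediate from the definition) and the clean bookkeeping of which of the two cases of Lemma \ref{iwi} applies in each degree range.
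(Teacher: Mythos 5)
Your proof is correct and follows essentially the same route as the paper: both deduce $(I_w)_j=(I_{c^{(0)}\cdots c^{(i)}})_j$ in degrees $j\leq i$ from Lemma \ref{iwi} (the paper packages this as a commutative diagram of truncated short exact sequences), use the second case of Lemma \ref{iwi} for the prefix to kill degrees $\geq i+1$, and finish the $i=0$ case with Lemma \ref{iw}(a). Your explicit remarks that the prefix $c^{(0)}\cdots c^{(i)}$ is again $c$-sortable and that $\supp(c^{(0)})=\supp(w)$ are points the paper leaves implicit, but they do not change the argument.
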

\begin{proof}
By Lemma \ref{iwi}, we have the following commutative diagram.
	$$
	\xymatrix{0 \ar[r] & (I_w)_{\leq i} \ar[r] \ar@{=}[d] & \Pi_{\leq i} \ar[r] \ar@{=}[d] & (\Pi_w)_{\leq i} \ar[r] \ar^{\simeq}[d] & 0\  \\
	0 \ar[r] & (I_{c^{(0)}c^{(1)}\cdots c^{(i)}})_{\leq i} \ar[r] & \Pi_{\leq i} \ar[r] & (\Pi /I_{c^{(0)}c^{(1)}\cdots c^{(i)}})_{\leq i}\ar[r] & 0.}
	$$
Therefore we have an equality $(\Pi_w)_{\leq i} = (\Pi_{c^{(0)}c^{(1)}\cdots c^{(i)}})_{\leq i}$.
The equality $(\Pi_{c^{(0)}c^{(1)}\cdots c^{(i)}})_{\leq i}=\Pi_{c^{(0)}c^{(1)}\cdots c^{(i)}}$ comes from Lemma \ref{iwi}.
If $i=0$, then we have $(\Pi_{w})_{0}=\Pi_{c^{(0)}}=kQ^{(1)}$, where the second equality comes from Lemma \ref{iw} (a).
\end{proof}
The following proposition is important to show Theorem \ref{thm}.
\begin{proposition}\label{tran}
Let $w=s_{u_1}\cdots s_{u_l}=c^{(0)}c^{(1)}\cdots c^{(m)}$ be a $c$-sortable element.
For any integer $i$ and $X\in \Sub^{\mathbb{Z}}\Pi_w$,  
we have $X_{\geq i}, X_{\leq i}, X_{i} \in \Sub^{\mathbb{Z}}\Pi_w$.
\end{proposition}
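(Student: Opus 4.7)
The plan is to reduce the statement to showing that each truncation $(\Pi_w)_{\leq k}$ of the algebra itself lies in $\Sub^{\mathbb{Z}}\Pi_w$, and then to identify this truncation via Proposition \ref{pii} with a summand of the cluster tilting object furnished by Proposition \ref{birs}(e).

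Fix an embedding $X\hookrightarrow F=\bigoplus_{a}\Pi_w(j_a)$ with $F$ graded free.  The equality $X_{\geq i}=X\cap F_{\geq i}$ of graded submodules of $F$ immediately gives $X_{\geq i}\in\Sub^{\mathbb{Z}}\Pi_w$.  Applying the same equality at degree $i+1$ yields a graded injection $X_{\leq i}=X/X_{\geq i+1}\hookrightarrow F/F_{\geq i+1}=F_{\leq i}$, and since $F_{\leq i}\cong\bigoplus_{a}((\Pi_w)_{\leq i+j_a})(j_a)$ and $\Sub^{\mathbb{Z}}\Pi_w$ is evidently closed under graded submodules, direct sums, and shifts, the claim for $X_{\leq i}$ reduces to showing that $(\Pi_w)_{\leq k}\in\Sub^{\mathbb{Z}}\Pi_w$ for every integer $k$.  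The cases $k<0$ and $k\geq m$ are immediate, since by Lemma \ref{iwi} the truncation then equals $0$ or $\Pi_w$ respectively.

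For $0\leq k\leq m-1$, Proposition \ref{pii} identifies $(\Pi_w)_{\leq k}=\Pi_v$ as graded $\Pi_w$-modules, where $v=c^{(0)}\cdots c^{(k)}$.  Concatenating reduced expressions of $c^{(0)},\ldots,c^{(m)}$ produces a reduced expression $w=s_{u_1}\cdots s_{u_l}$ whose prefix of length $\ell(v)$ is a reduced expression of $v$.  Applying Proposition \ref{birs}(e) to this reduced expression of $w$ exhibits $\Pi_v$ as an indecomposable summand of a cluster tilting object of $\underline{\Sub}\,\Pi_w$, and so $\Pi_v\in\Sub\,\Pi_w$ as an ungraded $\Pi_w$-module.

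It remains to transfer this ungraded statement to the graded category.  Since $\Pi_w$ is Iwanaga--Gorenstein of injective dimension at most one by Proposition \ref{birs}(b), a finitely generated (graded) $\Pi_w$-module $M$ lies in $\Sub\,\Pi_w$ (respectively $\Sub^{\mathbb{Z}}\Pi_w$) exactly when $\Ext^1_{\Pi_w}(M,\Pi_w)=0$ (respectively when $\Ext^1_{\mod^{\mathbb{Z}}\Pi_w}(M,\Pi_w(n))=0$ for every $n\in\mathbb{Z}$).  For finitely generated graded $M$ the standard decomposition $\Ext^1_{\Pi_w}(M,\Pi_w)\cong\bigoplus_{n\in\mathbb{Z}}\Ext^1_{\mod^{\mathbb{Z}}\Pi_w}(M,\Pi_w(n))$ makes these two vanishings equivalent, yielding $\Pi_v\in\Sub^{\mathbb{Z}}\Pi_w$.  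The most delicate point in this plan is this final ungraded-to-graded comparison; an alternative would be to exploit the factorisation $I_w=I_v\cdot I_{c^{(k+1)}\cdots c^{(m)}}$ to construct an explicit graded embedding $\Pi_v\hookrightarrow\bigoplus_u\Pi_w(n_u)$ by left multiplication by suitable homogeneous elements of $\Pi_w$ whose left annihilators coincide with $(\Pi_w)_{\geq k+1}e_u$.
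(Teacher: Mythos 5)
Your proof is correct and follows essentially the same route as the paper: $X_{\geq i}$ is immediate as a submodule, and $X_{\leq i}$ is handled by noting that truncation preserves injections and reducing to $(\Pi_w)_{\leq k}=\Pi_{c^{(0)}\cdots c^{(k)}}\in\Sub^{\mathbb{Z}}\Pi_w$ via Proposition \ref{pii} and the cluster tilting object of Proposition \ref{birs}. The only difference is that you spell out the ungraded-to-graded passage (via the $\Ext^1$-characterization of $\Sub\Pi_w$ for the $1$-Iwanaga--Gorenstein algebra $\Pi_w$ and the decomposition of ungraded $\Ext$ into graded pieces), a step the paper simply asserts; this is a legitimate and welcome extra justification, not a different argument.
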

\begin{proof}
Since $X_{\geq i}$ is a submodule of $X$, we have $X_{\geq i} \in \Sub^{\mathbb{Z}}\Pi_w$.

By Proposition \ref{birs} (e), we have $\Pi/I_{u_1\cdots u_j}\in \Sub^{\mathbb{Z}}\Pi_w$ for any $1 \leq j \leq l$.
Therefore, by Proposition \ref{pii}, we have $(\Pi_{w})_{\leq i}\in\Sub^{\mathbb{Z}}\Pi_w$ for any integer $i$.
Clearly, the functor $X \mapsto X_{\leq i}$ preserves injective morphisms.
Therefore we have $X_{\leq i} \in \Sub^{\mathbb{Z}}\Pi_w$.
Since $X_{\geq i} \in \Sub^{\mathbb{Z}}\Pi_w$, $X_{i}=(X_{\geq i})_{\leq i}\in \Sub^{\mathbb{Z}}\Pi_w$ holds.
\end{proof}
Next we recall the result of \cite{AIRT}.
For a reduced expression $w=s_{u_1}\cdots s_{u_l}$ and $1 \leq i\leq l$, we define a $\Pi_w$-module $L_w^{i}$ by $L_{w}^{1}:=\Pi/I_{u_{1}}$ and
	\begin{align*}
	L_w^{i}:=\frac{I_{u_1\cdots u_{i-1}}}{I_{u_1\cdots u_i}},
	\end{align*}
for $i\geq 2$.
\begin{proposition}\cite[Proposition 1.3]{AIRT}\label{airtprop}
We have equalities
	\begin{align*}
	L_w^i=L_w^ie_{u_i}=\frac{I_{u_1\cdots u_{j}}}{I_{u_1\cdots u_i}}e_{u_i},
	\end{align*}
where $j$ is the largest integer satisfying $j<i$ and $u_j=u_i$.
If such an integer $j$ does not appear in $u_1,\cdots,u_{i-1}$, then $L_w^i=(\Pi/I_{u_1\cdots u_i})e_{u_i}$.
\end{proposition}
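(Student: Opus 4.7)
The plan is to derive both equalities from two elementary observations about the ideals $I_u = \Pi(1-e_u)\Pi$: first, $1 - e_{u_i} \in I_{u_i}$; second, for any vertex $v \neq u_i$ one has $e_{u_i} = e_{u_i}(1-e_v) \in I_v$, since $e_{u_i} e_v = 0$. These two facts will drive everything.

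Using the first observation, I would begin by establishing $L_w^i = L_w^i e_{u_i}$. For any $x \in I_{u_1\cdots u_{i-1}}$, one has $x - x e_{u_i} = x(1-e_{u_i}) \in I_{u_1\cdots u_{i-1}} \cdot I_{u_i} = I_{u_1\cdots u_i}$, so the class of $x$ in $L_w^i$ equals that of $xe_{u_i}$. For the second equality, the inclusion $L_w^i \subseteq \frac{I_{u_1\cdots u_j}}{I_{u_1\cdots u_i}}\, e_{u_i}$ is immediate from $I_{u_1\cdots u_{i-1}} \subseteq I_{u_1\cdots u_j}$ combined with $L_w^i = L_w^i e_{u_i}$. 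For the reverse inclusion I would invoke the second observation: since $j$ is the largest index $<i$ with $u_j = u_i$, the vertices $u_{j+1}, \ldots, u_{i-1}$ are all distinct from $u_i$, so $e_{u_i} \in I_{u_k}$ for each $j < k < i$, and consequently
$$e_{u_i} = e_{u_i}\cdot e_{u_i}\cdots e_{u_i} \in I_{u_{j+1}}\cdots I_{u_{i-1}} = I_{u_{j+1}\cdots u_{i-1}}.$$
Given any $y \in I_{u_1\cdots u_j}$ it follows that $y e_{u_i} \in I_{u_1\cdots u_j}\cdot I_{u_{j+1}\cdots u_{i-1}} = I_{u_1\cdots u_{i-1}}$, which places the class of $y e_{u_i}$ inside $L_w^i$.

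The final statement is the boundary case, handled by taking $j=0$ under the convention that an empty product of ideals equals $\Pi$; since no $u_k$ with $k<i$ equals $u_i$, the argument above yields $e_{u_i}\in I_{u_1\cdots u_{i-1}}$, so the same computation gives $L_w^i = (\Pi/I_{u_1\cdots u_i})e_{u_i}$ verbatim. The only real obstacle is keeping the manipulation of products of two-sided ideals straight: the identity $e_{u_i} \in I_{u_{j+1}}\cdots I_{u_{i-1}}$ is the crux, and it genuinely relies both on the idempotence $e_{u_i}^2 = e_{u_i}$ and on the maximality of $j$, which ensures that $e_{u_i}$ lies in every factor.
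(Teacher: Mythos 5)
Your proof is correct: the two observations $1-e_{u_i}\in I_{u_i}$ and $e_{u_i}\in I_v$ for $v\neq u_i$ do indeed yield $L_w^i=L_w^ie_{u_i}$, the inclusion $e_{u_i}\in I_{u_{j+1}\cdots u_{i-1}}$ via maximality of $j$, and hence both equalities, with the boundary case handled by the empty-product convention. The paper itself gives no proof of this statement (it is quoted from \cite[Proposition 1.3]{AIRT}), and your elementary ideal-theoretic argument is essentially the standard one underlying the cited result, so there is nothing further to reconcile.
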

We use the following notation.
Let $w=s_{u_1}\cdots s_{u_l}$ be a reduced expression.
For any $u\in \supp(w)$, let 
\[
p_{u}=\Max\{1 \leq j \leq l \mid u_{j}=u\}.
\]
For $1\leq i \leq l$, let
\[
m_i=\sharp\{ 1\leq j \leq i-1\mid u_j=u_i\}.
\]
Note that, if $w=s_{u_1}\cdots s_{u_l}=c^{(0)}c^{(1)}\cdots c^{(m)}$ is a $c$-sortable element, 
then we have $m_{p_{u}}=\Max\{j \mid u \in \supp(c^{(j)})\}$ for any $u\in \supp(w)$.
Using $L_w^i$, we have the following information on $\Pi_we_u$.
\begin{lemma}\label{c}
Let $w=s_{u_1}\cdots s_{u_l}=c^{(0)}c^{(1)}\cdots c^{(m)}$ be a $c$-sortable element.
Then, for any $u\in \supp(w)$ and any integer $i\geq m_{p_{u}}$, we have 
	\begin{align}
	(\Pi_we_u)_{i} = \begin{cases}
				L_{w}^{p_{u}} & i = m_{p_{u}}, \\
				0 & m_{p_{u}}+1 \leq i.
			\end{cases}\notag
	\end{align}
\end{lemma}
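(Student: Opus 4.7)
The plan is to handle the vanishing and the identification separately.

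For the vanishing, the case $i > m$ is immediate from Lemma \ref{iwi} since then $(\Pi_w)_i = 0$. For $m_{p_u} < i \leq m$, observe that $u \notin \supp(c^{(i)})$ by definition of $m_{p_u}$, hence $e_u \in (I_{c^{(i)}})_0$ by Lemma \ref{iw}(c). Applying Lemma \ref{iwi} to the $c$-sortable element $c^{(0)}\cdots c^{(i-1)}$ (which has $i$ blocks) gives $(I_{c^{(0)}\cdots c^{(i-1)}})_i = \Pi_i$, so
\[
\Pi_i e_u = (I_{c^{(0)}\cdots c^{(i-1)}})_i \cdot e_u \subset (I_{c^{(0)}\cdots c^{(i-1)}})_i \cdot (I_{c^{(i)}})_0 \subset (I_{c^{(0)}\cdots c^{(i)}})_i = (I_w)_i,
\]
using Lemma \ref{iwi} for the final equality. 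Hence $(\Pi_w e_u)_i = 0$.

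For the identification, set $i = m_{p_u}$ and decompose the last block containing $u$ as $c^{(i)} = d \cdot s_u \cdot e$, where $d$ and $e$ are the (possibly empty) subsequences of $c^{(i)}$ before and after the occurrence of $u$. Since each simple reflection appears at most once in a subsequence of a Coxeter element, $u \notin \supp(d) \cup \supp(e)$, so $e_u \in (I_d)_0 \cap (I_e)_0$ by Lemma \ref{iw}(c). I plan to establish two equalities: (a) $(I_{u_1\cdots u_{p_u-1}})_i \cdot e_u = \Pi_i \cdot e_u$, and (b) $(I_{u_1\cdots u_{p_u}})_i \cdot e_u = (I_{c^{(0)}\cdots c^{(i)}})_i \cdot e_u$. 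For (a), the nontrivial inclusion uses that $\Pi_i = \Pi_1^i$ (since $\Pi$ is generated in degrees $0$ and $1$) and $(I_{c^{(j)}})_1 = \Pi_1$ from Lemma \ref{iw}(b), combined with $e_u \in (I_d)_0$, to yield
\[
\Pi_i \cdot e_u \subset (I_{c^{(0)}})_1 \cdots (I_{c^{(i-1)}})_1 \cdot (I_d)_0 \subset (I_{c^{(0)}\cdots c^{(i-1)} d})_i = (I_{u_1\cdots u_{p_u-1}})_i.
\]
For (b), $\supset$ is clear from $I_{c^{(0)}\cdots c^{(i)}} \subset I_{u_1\cdots u_{p_u}}$; for $\subset$, using $e_u \in (I_e)_0$, for $x \in (I_{u_1\cdots u_{p_u}})_i$ we have $x \cdot e_u \in I_{u_1\cdots u_{p_u}} \cdot I_e = I_{c^{(0)}\cdots c^{(i)}}$ in degree $i$.

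Combining (a) and (b) with Lemma \ref{iwi} (which gives $(I_w)_i = (I_{c^{(0)}\cdots c^{(i)}})_i$) and Proposition \ref{airtprop},
\[
(\Pi_w e_u)_i = \frac{\Pi_i e_u}{(I_w)_i e_u} = \frac{(I_{u_1\cdots u_{p_u-1}})_i \, e_u}{(I_{u_1\cdots u_{p_u}})_i \, e_u} = (L_w^{p_u})_i.
\]
Finally, $L_w^{p_u}$ is concentrated in degree $m_{p_u}$: for $k > m_{p_u}$, Lemma \ref{iwi} gives $(I_{u_1\cdots u_{p_u-1}})_k = (I_{u_1\cdots u_{p_u}})_k = \Pi_k$; for $k < m_{p_u}$, both coincide with $(I_{c^{(0)}\cdots c^{(k)}})_k$ by the same lemma. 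Hence $(\Pi_w e_u)_{m_{p_u}} = L_w^{p_u}$. The main bookkeeping obstacle is tracking the $c$-sortable block structures of $u_1\cdots u_{p_u-1}$ and $u_1\cdots u_{p_u}$ relative to that of $w$, especially in the degenerate cases where $d$ or $e$ is empty; but the arguments above go through in all cases.
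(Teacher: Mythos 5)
Your proof is correct, but it takes a noticeably different route from the paper's. The paper gets the vanishing from the ungraded identity $I_{w}e_{u}=I_{c^{(0)}\cdots c^{(m_{p_u})}}e_{u}$ together with Lemma \ref{iwi}, and then computes the degree-$m_{p_u}$ piece in one stroke as the kernel of the truncation map $(\Pi_we_u)_{\leq i}\to(\Pi_we_u)_{\leq i-1}$, using Proposition \ref{pii} to identify both truncations with $\Pi/I_{c^{(0)}\cdots c^{(i)}}e_u$ and $\Pi/I_{c^{(0)}\cdots c^{(i-1)}}e_u$, and then absorbing idempotents and invoking Proposition \ref{airtprop}; in particular it never needs to prove separately that $L_w^{p_u}$ sits in a single degree, since the computation produces the whole module at once. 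You bypass Proposition \ref{pii} entirely and argue degree by degree with the ideals themselves: the block decomposition $c^{(m_{p_u})}=d\,s_u\,e$, the memberships $e_u\in(I_d)_0$, $(I_e)_0$, $(I_{c^{(i)}})_0$ from Lemma \ref{iw}(c), and Lemma \ref{iwi} applied not only to $w$ but to the prefixes $c^{(0)}\cdots c^{(i-1)}$, $s_{u_1}\cdots s_{u_{p_u-1}}$ and $s_{u_1}\cdots s_{u_{p_u}}$ (which, as you note, are again reduced and $c$-sortable, including the degenerate cases where $d$ or $e$ is empty), plus the extra concentration step for $L_w^{p_u}$. Your identities (a) and (b) and the vanishing argument all check out, so there is no gap; what the paper's route buys is brevity, since Proposition \ref{pii} already packages the truncation data, while your route shows the lemma follows from Lemmas \ref{iw} and \ref{iwi} alone (with Proposition \ref{airtprop} only needed for $L_w^{p_u}=L_w^{p_u}e_u$), at the cost of more bookkeeping with graded pieces and prefixes.
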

\begin{proof}
Since $I_{w}e_{u}=(I_{c^{(0)}c^{(1)}\cdots c^{(m_{p_{u}})}})e_{u}$, we have $\Pi_we_u=(\Pi_{c^{(0)}c^{(1)}\cdots c^{(m_{p_{u}})}})e_{u}$.
Thus, by Lemma \ref{iwi}, we have $(\Pi_we_u)_i=0$ for $m_{p_{u}}+1 \leq i$.

If $i=m_{p_{u}}$, we have
\begin{align*}
(\Pi_{w}e_{u})_{i} & = \Ker \left( (\Pi_{w}e_{u})_{\leq i} \to (\Pi_{w}e_{u})_{\leq i-1} \right) \\
			& = \Ker \left(  \frac{\Pi}{I_{c^{(0)}c^{(1)}\cdots c^{(i)}}}e_u \to \frac{\Pi}{I_{c^{(0)}c^{(1)}\cdots c^{(i-1)}}}e_u \right) \\
			& = \frac{I_{c^{(0)}c^{(1)}\cdots c^{(i-1)}}}{I_{c^{(0)}c^{(1)}\cdots c^{(i)}}}e_u,
\end{align*}
where the second equality comes from Proposition \ref{pii}.
Since $I_{c^{(0)}c^{(1)}\cdots c^{(i)}}e_{u}=I_{u_1\cdots u_{p_{u}}}e_{u}$, we have the desired equality.
\end{proof}
The next theorem is one of the main results in \cite{AIRT}, and important in this paper.
We use Theorem \ref{airt} to prove Proposition \ref{M-generate}.
For an element $w$ in $W$,
let $Q^{(1)}$ be the full subquiver of $Q$ whose set of vertices is $\supp(w)$.
\begin{theorem}\label{airt}
Let $w=s_{u_1}\cdots s_{u_l}=c^{(0)}c^{(1)}\cdots c^{(m)}$ be a $c$-sortable element.
Then \[T=\bigoplus_{u\in Q_0^{(1)}}L_{w}^{p_{u}}=\bigoplus_{u\in Q_0^{(1)}}(\Pi_we_u(m_{p_{u}}))_{0}\] is a tilting $kQ^{(1)}$-module.
\end{theorem}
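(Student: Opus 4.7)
The plan is to verify, in order, the three conditions characterizing a tilting module over the hereditary algebra $kQ^{(1)}$: (i) $T$ is a $kQ^{(1)}$-module, (ii) $T$ has exactly $|Q_0^{(1)}|$ pairwise non-isomorphic indecomposable summands, and (iii) $\Ext^1_{kQ^{(1)}}(T,T)=0$. Projective dimension at most one is automatic from hereditarity.

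For (i), I would combine Lemma \ref{c} and Proposition \ref{pii}. Lemma \ref{c} identifies $L_w^{p_u}$ with the top nonzero graded component $(\Pi_w e_u)_{m_{p_u}}$ of $\Pi_w e_u$; positivity of the grading forces $(\Pi_w)_{\geq 1}$ to annihilate it, so the $\Pi_w$-action factors through $(\Pi_w)_0 = kQ^{(1)}$. For (ii), Proposition \ref{airtprop} gives $L_w^{p_u}=L_w^{p_u}e_u$, so the top of $L_w^{p_u}$ as a $kQ^{(1)}$-module is concentrated at the vertex $u$; in fact one checks this top is exactly the simple $S_u$, yielding $|Q_0^{(1)}|$ pairwise non-isomorphic indecomposable summands.

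The main obstacle is (iii). My approach is to reduce rigidity over $kQ^{(1)}$ to rigidity over $\Pi_w$, and then to exploit the cluster-tilting object of Proposition \ref{birs}(e). Since $(\Pi_w)_{\geq 1}$ is a two-sided ideal with quotient $(\Pi_w)_0 = kQ^{(1)}$, restriction along the projection embeds $\mod kQ^{(1)}$ as a full subcategory of $\mod \Pi_w$; any $\Pi_w$-splitting of a $kQ^{(1)}$-extension is automatically a $kQ^{(1)}$-splitting (the $\Pi_w$-action on $kQ^{(1)}$-modules factors through the quotient), so there is a natural injection $\Ext^1_{kQ^{(1)}}(T,T) \hookrightarrow \Ext^1_{\Pi_w}(T,T)$ and it suffices to prove $\Ext^1_{\Pi_w}(L_w^{p_u}, L_w^{p_v}) = 0$ for all $u,v\in\supp(w)$. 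For this I would use the short exact sequence
\[0 \longrightarrow L_w^{p_u} \longrightarrow \Pi_w e_u \longrightarrow (\Pi/I_{u_1\cdots u_{p_u - 1}})e_u \longrightarrow 0,\]
obtained by noting that $I_w e_u = I_{u_1\cdots u_{p_u}} e_u$ (since $I_{u_j} e_u = \Pi e_u$ whenever $u_j \neq u$), and then further filter the right-hand quotient by the modules $L_w^j$ attached to earlier occurrences of $u$. Propagating the rigidity of $\bigoplus_{i=1}^{l}\Pi/I_{u_1\cdots u_i}$ in $\Sub\Pi_w$ inductively down this filtration via the associated long exact sequences should deliver the desired vanishing. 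The delicate point, where $c$-sortability is genuinely used, is in controlling interactions across distinct vertices through the nested supports $\supp(c^{(m)})\subset\cdots\subset\supp(c^{(0)})$; these ensure the degree-zero parts of the relevant ideals stabilize (Lemma \ref{ic0}) so that the filtration argument closes without residual extensions.
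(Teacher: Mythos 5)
The paper itself gives no proof of Theorem \ref{airt}: it is imported from \cite{AIRT}, where it is obtained by an induction on the $c$-sortable word with reflection-functor type reductions (in the same spirit as Section \ref{theendomorphism} of this paper), so your proposal has to stand on its own. It does not, because step (ii) contains a genuine error. Proposition \ref{airtprop} asserts $L_w^{p_u}=L_w^{p_u}e_u$, which concerns the right action of $e_u$ --- it records that $L_w^{p_u}$ is a subquotient of the column $\Pi e_u$ --- and it gives no information about the top of $L_w^{p_u}$ as a left $kQ^{(1)}$-module; that top is not $S_u$ in general. For $Q\colon 1\to 2$ and $w=s_1s_2s_1$ (so $p_1=3$, $p_2=2$) one computes $I_{s_1s_2}=k\alpha^{\ast}$, hence $L_w^{3}=I_{s_1s_2}e_1\simeq S_2$, while $L_w^{2}\simeq kQe_2$; both last layers have top $S_2$ even though $u_3=1$. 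Consequently your argument establishes neither that the $L_w^{p_u}$ are indecomposable, nor that they are pairwise non-isomorphic, nor, strictly, that they are nonzero --- and this count is exactly where the substance of \cite[Theorem 3.11]{AIRT} lies (there it comes from identifying the layers with explicit exceptional $kQ^{(1)}$-modules by induction, not from a formal top computation).

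The rigidity step, by contrast, is essentially sound and can be closed without the extra filtration you gesture at. The embedding $\Ext^1_{kQ^{(1)}}(T,T)\hookrightarrow\Ext^1_{\Pi_w}(T,T)$ is the standard fact for modules over the quotient $\Pi_w/(\Pi_w)_{\geq 1}=kQ^{(1)}$, and the vanishing of $\Ext^1_{\Pi_w}(L_w^{p_u},L_w^{p_v})$ follows from the very sequence you wrote: its middle term is projective and its cokernel $N^{u}=(\Pi/I_{u_1\cdots u_{p_u-1}})e_u$ lies in $\add$ of the cluster-tilting object of Proposition \ref{birs} (e). Applying $\Hom_{\Pi_w}(-,L_w^{p_v})$ gives $\Ext^1_{\Pi_w}(L_w^{p_u},L_w^{p_v})\simeq\Ext^2_{\Pi_w}(N^u,L_w^{p_v})$, and applying $\Hom_{\Pi_w}(N^u,-)$ to the analogous sequence for $v$ kills this group, using $\Ext^1_{\Pi_w}(N^u,N^v)=0$ and $\Ext^{i}_{\Pi_w}(N^u,\Pi_w)=0$ for $i\geq 1$ (as $N^u\in\Sub\Pi_w$ and $\Pi_w$ is Iwanaga-Gorenstein of dimension at most one). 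So neither Lemma \ref{ic0} nor a filtration by earlier layers is needed there; $c$-sortability enters through Lemma \ref{c}, which makes $T$ a $kQ^{(1)}$-module in the first place. Without a correct argument for the summand count, however, the proposal does not prove the theorem.
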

\begin{proof}
$T=\bigoplus_{u\in Q_0^{(1)}}L_{w}^{p_{u}}$ is a tilting $kQ^{(1)}$-module by \cite[Theorem 3.11]{AIRT}.
Moreover $T=\bigoplus_{u\in Q_0^{(1)}}(\Pi_we_u(m_{p_{u}}))_{0}$ holds by Lemma \ref{c}.
\end{proof}
We give one example which illustrates the tilting module of Theorem \ref{airt}.
\begin{example}
Let $Q$ be a quiver
	\begin{xy} (0,0)="O",
	"O"+<0cm,0.35cm>="11"*{1},
	"11"+<-0.8cm,-0.7cm>="21"*{2},
	"21"+/r1.6cm/="22"*{3},
	
	\ar"11"+/dl/;"21"+/u/
	\ar"21"+/r/;"22"+/l/
	\ar"11"+/dr/;"22"+/u/
	\end{xy} and $w=s_1s_2s_3s_1s_2s_1$.
This is a $c$-sortable element.
Then we have a graded algebra $\Pi_w=\Pi_we_1\oplus\Pi_we_2\oplus\Pi_we_3$,
	\[
	\begin{xy} (0,0)="O",
	"O"+<0cm,1cm>="11"*{{\bf 1}},
	"11"+<-0.3cm,-0.5cm>="21"*{2},
	"21"+<-0.3cm,-0.5cm>="31"*{3},
	"31"+<-0.3cm,-0.5cm>="41"*{1},
	"21"+/r0.6cm/="22"*{3},
	"31"+/r0.6cm/="32"*{1},
	"32"+/r0.6cm/="33"*{2},
	"41"+/r0.6cm/="42",
	"42"+/r0.6cm/="43",
	"43"+/r0.6cm/="44"*{1},
	
	\ar@{-}"21"+/dr/;"32"+/u/<-2pt>
	\ar@{-}"22"+/dl/;"32"+/u/<2pt>
	\ar@{-}"22"+/dr/;"33"+/u/<-2pt>
	\ar@{-}"31"+/dl/;"41"+/u/<2pt>
	\ar@{-}"33"+/dr/;"44"+/u/<-2pt>
	\end{xy}
\qquad
	\begin{xy} (0,0)="O",
	"O"+<0cm,1cm>="11"*{{\bf 2}},
	"11"+<-0.3cm,-0.5cm>="21"*{3},
	"21"+<-0.3cm,-0.5cm>="31"*{1},
	"31"+<-0.3cm,-0.5cm>="41",
	"41"+<-0.3cm,-0.5cm>="51",
	"21"+/r0.6cm/="22"*{{\bf 1}},
	"31"+/r0.6cm/="32"*{2},
	"32"+/r0.6cm/="33"*{3},
	"41"+/r0.6cm/="42",
	"42"+/r0.6cm/="43"*{1},
	"43"+/r0.6cm/="44"*{2},
	"51"+/r0.6cm/="52",
	"52"+/r0.6cm/="53",
	"53"+/r0.6cm/="54",
	"54"+/r0.6cm/="55"*{1},
	
	\ar@{-}"11"+/dr/;"22"+/u/<-2pt>
	\ar@{-}"21"+/dl/;"31"+/u/<2pt>
	\ar@{-}"21"+/dr/;"32"+/u/<-2pt>
	\ar@{-}"32"+/dr/;"43"+/u/<-2pt>
	\ar@{-}"33"+/dl/;"43"+/u/<2pt>
	\ar@{-}"33"+/dr/;"44"+/u/<-2pt>
	\ar@{-}"44"+/dr/;"55"+/u/<-2pt>
	\end{xy}
\qquad
	\begin{xy} (0,0)="O",
	"O"+<0cm,0.7cm>="11"*{{\bf 3}},
	"11"+<-0.3cm,-0.5cm>="21"*{{\bf 1}},
	"21"+<-0.3cm,-0.5cm>="31",
	"21"+/r0.6cm/="22"*{{\bf 2}},
	"31"+/r0.6cm/="32",
	"32"+/r0.6cm/="33"*{{\bf 1}},
	
	\ar@{-}"11"+/dl/;"21"+/u/<2pt>
	\ar@{-}"11"+/dr/;"22"+/u/<-2pt>
	\ar@{-}"22"+/dr/;"33"+/u/<-2pt>
	\end{xy}.
	\]
We have 
\[L_{w}^{1}=1,
	\qquad 
L_{w}^{2}=
	\begin{xy}(0,0)="00",
	"00"+<-0.3cm,0.25cm>="11"*{2},
	"11"+<-0.3cm,-0.5cm>="21",
	"21"+/r0.6cm/="22"*{1},
	\ar@{-}"11"+/dr/;"22"+/u/<-2pt>
	\end{xy},
	\qquad
L_{w}^{3}=
	\begin{xy} (0,0)="O",
	"O"+<0cm,0.7cm>="11"*{3},
	"11"+<-0.3cm,-0.5cm>="21"*{1},
	"21"+<-0.3cm,-0.5cm>="31",
	"21"+/r0.6cm/="22"*{2},
	"31"+/r0.6cm/="32",
	"32"+/r0.6cm/="33"*{1},
	
	\ar@{-}"11"+/dl/;"21"+/u/<2pt>
	\ar@{-}"11"+/dr/;"22"+/u/<-2pt>
	\ar@{-}"22"+/dr/;"33"+/u/<-2pt>
	\end{xy},\]
\[L_{w}^{4}=
	\begin{xy} (0,0)="O",
	"O"+<0cm,1cm>="11",
	"11"+<-0.3cm,-0.5cm>="21"*{2},
	"21"+<-0.3cm,-0.5cm>="31",
	"31"+<-0.3cm,-0.5cm>="41",
	"21"+/r0.6cm/="22"*{3},
	"31"+/r0.6cm/="32"*{1},
	"32"+/r0.6cm/="33"*{2},
	"41"+/r0.6cm/="42",
	"42"+/r0.6cm/="43",
	"43"+/r0.6cm/="44"*{1},
	
	\ar@{-}"21"+/dr/;"32"+/u/<-2pt>
	\ar@{-}"22"+/dl/;"32"+/u/<2pt>
	\ar@{-}"22"+/dr/;"33"+/u/<-2pt>
	\ar@{-}"33"+/dr/;"44"+/u/<-2pt>
	\end{xy},
	\qquad
L_{w}^{5}=
	\begin{xy} (0,0)="O",
	"O"+<0cm,1cm>="11",
	"11"+<-0.3cm,-0.5cm>="21"*{3},
	"21"+<-0.3cm,-0.5cm>="31"*{1},
	"31"+<-0.3cm,-0.5cm>="41",
	"41"+<-0.3cm,-0.5cm>="51",
	"21"+/r0.6cm/="22",
	"31"+/r0.6cm/="32"*{2},
	"32"+/r0.6cm/="33"*{3},
	"41"+/r0.6cm/="42",
	"42"+/r0.6cm/="43"*{1},
	"43"+/r0.6cm/="44"*{2},
	"51"+/r0.6cm/="52",
	"52"+/r0.6cm/="53",
	"53"+/r0.6cm/="54",
	"54"+/r0.6cm/="55"*{1},
	
	\ar@{-}"21"+/dl/;"31"+/u/<2pt>
	\ar@{-}"21"+/dr/;"32"+/u/<-2pt>
	\ar@{-}"32"+/dr/;"43"+/u/<-2pt>
	\ar@{-}"33"+/dl/;"43"+/u/<2pt>
	\ar@{-}"33"+/dr/;"44"+/u/<-2pt>
	\ar@{-}"44"+/dr/;"55"+/u/<-2pt>
	\end{xy},
	\quad
L_{w}^{6}=
	\begin{xy} (0,0)="00",
	"00"+<0.3cm,0.25cm>="11"*{3},
	"11"+<-0.3cm,-0.5cm>="21"*{1},
	\ar@{-}"11"+/dl/;"21"+/u/
	\end{xy}.
\]
By Theorem \ref{airt}, $L_{w}^{3}\oplus L_{w}^{5}\oplus L_{w}^{6}$ is a tilting $kQ$-module.
\if()
Then we have a graded algebra $\Pi=\Pi e_1\oplus\Pi e_2\oplus\Pi e_3$, and these are represented by their radical filtrations
	$$
	\begin{xy} (0,0)="11"*{1},
	"11"+<-0.3cm,-0.5cm>="21"*{2},
	"21"+<-0.3cm,-0.5cm>="31"*{3},
	"31"+<-0.3cm,-0.5cm>="41"*{1},
	"41"+<-0.3cm,-0.5cm>="51"*{2},
	"51"+<-0.3cm,-0.5cm>="61"*{3},
	"61"+<-0.3cm,-0.5cm>="71",
	"21"+/r0.6cm/="22"*{3},
	"31"+/r0.6cm/="32"*{1},
	"32"+/r0.6cm/="33"*{2},
	"41"+/r0.6cm/="42"*{2},
	"42"+/r0.6cm/="43"*{3},
	"43"+/r0.6cm/="44"*{1},
	"51"+/r0.6cm/="52"*{3},
	"52"+/r0.6cm/="53"*{1},
	"53"+/r0.6cm/="54"*{2},
	"54"+/r0.6cm/="55"*{3},
	"61"+/r0.6cm/="62"*{1},
	"62"+/r0.6cm/="63"*{2},
	"63"+/r0.6cm/="64"*{3},
	"64"+/r0.6cm/="65"*{1},
	"65"+/r0.6cm/="66"*{2},
	"71"+/r0.6cm/="72",
	"72"+/r0.6cm/="73",
	"73"+/r0.6cm/="74",
	"74"+/r0.6cm/="75",
	"75"+/r0.6cm/="76",
	"76"+/r0.6cm/="77",
	
	\ar@{-}"11"+/dl/;"21"+/u/<2pt>
	\ar@{-}"11"+/dr/;"22"+/u/<-2pt>
	\ar@{-}"21"+/dl/;"31"+/u/<2pt>
	\ar@{-}"32"+/dl/;"42"+/u/<2pt>
	\ar@{-}"32"+/dr/;"43"+/u/<-2pt>
	\ar@{-}"33"+/dl/;"43"+/u/<2pt>
	\ar@{-}"41"+/dl/;"51"+/u/<2pt>
	\ar@{-}"41"+/dr/;"52"+/u/<-2pt>
	\ar@{-}"42"+/dl/;"52"+/u/<2pt>
	\ar@{-}"44"+/dl/;"54"+/u/<2pt>
	\ar@{-}"44"+/dr/;"55"+/u/<-2pt>
	\ar@{-}"51"+/dl/;"61"+/u/<2pt>
	\ar@{-}"53"+/dl/;"63"+/u/<2pt>
	\ar@{-}"53"+/dr/;"64"+/u/<-2pt>
	\ar@{-}"54"+/dl/;"64"+/u/<2pt>
	\ar@{-}"62"+/dl/;"72"+/u/<2pt>
	\ar@{-}"62"+/dr/;"73"+/u/<-2pt>
	\ar@{-}"63"+/dl/;"73"+/u/<2pt>
	\ar@{-}"65"+/dl/;"75"+/u/<2pt>
	\ar@{-}"65"+/dr/;"76"+/u/<-2pt>
	\ar@{-}"66"+/dl/;"76"+/u/<2pt>
	\end{xy}
\qquad
	\begin{xy} (0,0)="11"*{2},
	"11"+<-0.3cm,-0.5cm>="21"*{3},
	"21"+<-0.3cm,-0.5cm>="31"*{1},
	"31"+<-0.3cm,-0.5cm>="41"*{2},
	"41"+<-0.3cm,-0.5cm>="51"*{3},
	"51"+<-0.3cm,-0.5cm>="61"*{1},
	"61"+<-0.3cm,-0.5cm>="71",
	"21"+/r0.6cm/="22"*{1},
	"31"+/r0.6cm/="32"*{2},
	"32"+/r0.6cm/="33"*{3},
	"41"+/r0.6cm/="42"*{3},
	"42"+/r0.6cm/="43"*{1},
	"43"+/r0.6cm/="44"*{2},
	"51"+/r0.6cm/="52"*{1},
	"52"+/r0.6cm/="53"*{2},
	"53"+/r0.6cm/="54"*{3},
	"54"+/r0.6cm/="55"*{1},
	"61"+/r0.6cm/="62"*{2},
	"62"+/r0.6cm/="63"*{3},
	"63"+/r0.6cm/="64"*{1},
	"64"+/r0.6cm/="65"*{2},
	"65"+/r0.6cm/="66"*{3},
	"71"+/r0.6cm/="72",
	"72"+/r0.6cm/="73",
	"73"+/r0.6cm/="74",
	"74"+/r0.6cm/="75",
	"75"+/r0.6cm/="76",
	"76"+/r0.6cm/="77",
	
	\ar@{-}"11"+/dl/;"21"+/u/<2pt>
	\ar@{-}"22"+/dl/;"32"+/u/<2pt>
	\ar@{-}"22"+/dr/;"33"+/u/<-2pt>
	\ar@{-}"31"+/dl/;"41"+/u/<2pt>
	\ar@{-}"31"+/dr/;"42"+/u/<-2pt>
	\ar@{-}"32"+/dl/;"42"+/u/<2pt>
	\ar@{-}"41"+/dl/;"51"+/u/<2pt>
	\ar@{-}"43"+/dl/;"53"+/u/<2pt>
	\ar@{-}"43"+/dr/;"54"+/u/<-2pt>
	\ar@{-}"44"+/dl/;"54"+/u/<2pt>
	\ar@{-}"52"+/dl/;"62"+/u/<2pt>
	\ar@{-}"52"+/dr/;"63"+/u/<-2pt>
	\ar@{-}"53"+/dl/;"63"+/u/<2pt>
	\ar@{-}"55"+/dl/;"65"+/u/<2pt>
	\ar@{-}"55"+/dr/;"66"+/u/<-2pt>
	\ar@{-}"61"+/dl/;"71"+/u/<2pt>
	\ar@{-}"61"+/dr/;"72"+/u/<-2pt>
	\ar@{-}"62"+/dl/;"72"+/u/<2pt>
	\ar@{-}"64"+/dl/;"74"+/u/<2pt>
	\ar@{-}"64"+/dr/;"75"+/u/<-2pt>
	\ar@{-}"65"+/dl/;"75"+/u/<2pt>
	\end{xy}
\qquad
	\begin{xy} (0,0)="11"*{3},
	"11"+<-0.3cm,-0.4cm>="21"*{1},
	"21"+<-0.3cm,-0.4cm>="31"*{2},
	"31"+<-0.3cm,-0.4cm>="41"*{3},
	"41"+<-0.3cm,-0.4cm>="51"*{1},
	"51"+<-0.3cm,-0.4cm>="61"*{2},
	"61"+<-0.3cm,-0.4cm>="71",
	"21"+/r0.6cm/="22"*{2},
	"31"+/r0.6cm/="32"*{3},
	"32"+/r0.6cm/="33"*{1},
	"41"+/r0.6cm/="42"*{1},
	"42"+/r0.6cm/="43"*{2},
	"43"+/r0.6cm/="44"*{3},
	"51"+/r0.6cm/="52"*{2},
	"52"+/r0.6cm/="53"*{3},
	"53"+/r0.6cm/="54"*{1},
	"54"+/r0.6cm/="55"*{2},
	"61"+/r0.6cm/="62"*{3},
	"62"+/r0.6cm/="63"*{1},
	"63"+/r0.6cm/="64"*{2},
	"64"+/r0.6cm/="65"*{3},
	"65"+/r0.6cm/="66"*{1},
	"71"+/r0.6cm/="72",
	"72"+/r0.6cm/="73",
	"73"+/r0.6cm/="74",
	"74"+/r0.6cm/="75",
	"75"+/r0.6cm/="76",
	"76"+/r0.6cm/="77",
	
	\ar@{-}"21"+/dl/;"31"+<0cm,0.15cm>
	\ar@{-}"21"+/dr/;"32"+<0cm,0.15cm>
	\ar@{-}"22"+/dl/;"32"+<0cm,0.15cm>
	\ar@{-}"31"+/dl/;"41"+<0cm,0.15cm>
	\ar@{-}"33"+/dl/;"43"+<0cm,0.15cm>
	\ar@{-}"33"+/dr/;"44"+<0cm,0.15cm>
	\ar@{-}"42"+/dl/;"52"+<0cm,0.15cm>
	\ar@{-}"42"+/dr/;"53"+<0cm,0.15cm>
	\ar@{-}"43"+/dl/;"53"+<0cm,0.15cm>
	\ar@{-}"51"+/dl/;"61"+<0cm,0.15cm>
	\ar@{-}"51"+/dr/;"62"+<0cm,0.15cm>
	\ar@{-}"52"+/dl/;"62"+<0cm,0.15cm>
	\ar@{-}"54"+/dl/;"64"+<0cm,0.15cm>
	\ar@{-}"54"+/dr/;"65"+<0cm,0.15cm>
	\ar@{-}"55"+/dl/;"65"+<0cm,0.15cm>
	\ar@{-}"61"+/dl/;"71"+<0cm,0.15cm>
	\ar@{-}"63"+/dl/;"73"+<0cm,0.15cm>
	\ar@{-}"63"+/dr/;"74"+<0cm,0.15cm>
	\ar@{-}"64"+/dl/;"74"+<0cm,0.15cm>
	\ar@{-}"66"+/dl/;"76"+<0cm,0.15cm>
	\ar@{-}"66"+/dr/;"77"+<0cm,0.15cm>
	\end{xy}
	$$
where numbers connected by solid lines are in the same degree, 
and the tops of the $\Pi e_i$ are concentrated in degree $0$.

This is a $c$-sortable element.
Then we have a graded algebra $\Pi_w=\Pi_we_1\oplus\Pi_we_2\oplus\Pi_we_3$,
	\[
	\begin{xy} (0,0)="11"*{1},
	"11"+<-0.3cm,-0.5cm>="21"*{2},
	"21"+<-0.3cm,-0.5cm>="31"*{3},
	"21"+/r0.6cm/="22"*{3},
	
	\ar@{-}"11"+/dl/;"21"+/u/<2pt>
	\ar@{-}"11"+/dr/;"22"+/u/<-2pt>
	\ar@{-}"21"+/dl/;"31"+/u/<2pt>
	\end{xy}
\qquad
	\begin{xy} (0,0)="11"*{2},
	"11"+<-0.3cm,-0.5cm>="21"*{3},
	"21"+<-0.3cm,-0.5cm>="31"*{1},
	"31"+<-0.3cm,-0.5cm>="41"*{2},
	"41"+<-0.3cm,-0.5cm>="51"*{3},
	"21"+/r0.6cm/="22"*{1},
	"31"+/r0.6cm/="32"*{2},
	"32"+/r0.6cm/="33"*{3},
	"41"+/r0.6cm/="42"*{3},
	
	\ar@{-}"11"+/dl/;"21"+/u/<2pt>
	\ar@{-}"22"+/dl/;"32"+/u/<2pt>
	\ar@{-}"22"+/dr/;"33"+/u/<-2pt>
	\ar@{-}"31"+/dl/;"41"+/u/<2pt>
	\ar@{-}"31"+/dr/;"42"+/u/<-2pt>
	\ar@{-}"32"+/dl/;"42"+/u/<2pt>
	\ar@{-}"41"+/dl/;"51"+/u/<2pt>
	\end{xy}
\qquad
	\begin{xy} (0,0)="11"*{3},
	"11"+<-0.3cm,-0.5cm>="21"*{1},
	"21"+<-0.3cm,-0.5cm>="31"*{2},
	"31"+<-0.3cm,-0.5cm>="41"*{3},
	"21"+/r0.6cm/="22"*{2},
	"31"+/r0.6cm/="32"*{3},
	"32"+/r0.6cm/="33"*{1},
	"41"+/r0.6cm/="42",
	"42"+/r0.6cm/="43",
	"43"+/r0.6cm/="44"*{3},
	
	\ar@{-}"21"+/dl/;"31"+/u/<2pt>
	\ar@{-}"21"+/dr/;"32"+/u/<-2pt>
	\ar@{-}"22"+/dl/;"32"+/u/<2pt>
	\ar@{-}"31"+/dl/;"41"+/u/<2pt>
	\ar@{-}"33"+/dr/;"44"+/u/<-2pt>
	\end{xy}.
	\]\fi
\end{example}
\section{A tilting object in $\underline{\Sub}^{\mathbb{Z}}\Pi_w$}\label{tilting}
In this section, we construct a tilting object in $\underline{\Sub}^{\mathbb{Z}}\Pi_w$ when $w$ is a $c$-sortable element.
A triangle equivalence induced from tilting objects is given in Section \ref{theendomorphism}.
We first recall the definition of tilting objects in triangulated categories (e.g. \cite{IT, Y}).
\begin{definition}\label{deftilting}
Let $\mathcal{T}$ be a triangulated category.
An object $U$ in $\mathcal{T}$ is called a {\it tilting object} if the following holds.
\begin{itemize}
\item $\Hom_{\mathcal{T}}(U,U[j])=0$ for any $j\neq 0$.
\item $\thick U=\mathcal{T}$, where $\thick U$ is the smallest triangulated full subcategory of $\mathcal{T}$ containing $U$ and closed under direct summands.
\end{itemize}
\end{definition}
\begin{definition}\label{def-tilting-c-sort}
For a $c$-sortable elemnt $w=c^{(0)}c^{(1)}\cdots c^{(m)}$, put \[M:=\bigoplus _{i=0}^m(\Pi_{c^{(0)}\cdots c^{(i)}})(i). \]
\end{definition}
Throughout this section, let $w=c^{(0)}c^{(1)}\cdots c^{(m)}$ be a $c$-sortable element and let $M$ as in Definition \ref{def-tilting-c-sort}.
This $M$ belongs to $\Sub^{\mathbb{Z}}\Pi_w$ by Proposition \ref{birs} (e) and (\ref{subz=sub}).
\begin{example}\label{exampleA2tilting}
Let $Q$ be a quiver
	\begin{xy} (0,0)="O",
	"O"+<0cm,0.35cm>="11"*{1},
	"11"+<-0.8cm,-0.7cm>="21"*{2},
	"21"+/r1.6cm/="22"*{3},
	
	\ar"11"+/dl/;"21"+/u/
	\ar"21"+/r/;"22"+/l/
	\ar"11"+/dr/;"22"+/u/
	\end{xy}.
Let $w=s_1s_2s_3s_1s_2s_1$.
This is a $c$-sortable element.
Then we have a graded algebra $\Pi_w=\Pi_we_1\oplus\Pi_we_2\oplus\Pi_we_3$,
	\[
	\begin{xy} (0,0)="O",
	"O"+<0cm,1cm>="11"*{{\bf 1}},
	"11"+<-0.3cm,-0.5cm>="21"*{2},
	"21"+<-0.3cm,-0.5cm>="31"*{3},
	"31"+<-0.3cm,-0.5cm>="41"*{1},
	"21"+/r0.6cm/="22"*{3},
	"31"+/r0.6cm/="32"*{1},
	"32"+/r0.6cm/="33"*{2},
	"41"+/r0.6cm/="42",
	"42"+/r0.6cm/="43",
	"43"+/r0.6cm/="44"*{1},
	
	\ar@{-}"21"+/dr/;"32"+/u/<-2pt>
	\ar@{-}"22"+/dl/;"32"+/u/<2pt>
	\ar@{-}"22"+/dr/;"33"+/u/<-2pt>
	\ar@{-}"31"+/dl/;"41"+/u/<2pt>
	\ar@{-}"33"+/dr/;"44"+/u/<-2pt>
	\end{xy}
\qquad
	\begin{xy} (0,0)="O",
	"O"+<0cm,1cm>="11"*{{\bf 2}},
	"11"+<-0.3cm,-0.5cm>="21"*{3},
	"21"+<-0.3cm,-0.5cm>="31"*{1},
	"31"+<-0.3cm,-0.5cm>="41",
	"41"+<-0.3cm,-0.5cm>="51",
	"21"+/r0.6cm/="22"*{{\bf 1}},
	"31"+/r0.6cm/="32"*{2},
	"32"+/r0.6cm/="33"*{3},
	"41"+/r0.6cm/="42",
	"42"+/r0.6cm/="43"*{1},
	"43"+/r0.6cm/="44"*{2},
	"51"+/r0.6cm/="52",
	"52"+/r0.6cm/="53",
	"53"+/r0.6cm/="54",
	"54"+/r0.6cm/="55"*{1},
	
	\ar@{-}"11"+/dr/;"22"+/u/<-2pt>
	\ar@{-}"21"+/dl/;"31"+/u/<2pt>
	\ar@{-}"21"+/dr/;"32"+/u/<-2pt>
	\ar@{-}"32"+/dr/;"43"+/u/<-2pt>
	\ar@{-}"33"+/dl/;"43"+/u/<2pt>
	\ar@{-}"33"+/dr/;"44"+/u/<-2pt>
	\ar@{-}"44"+/dr/;"55"+/u/<-2pt>
	\end{xy}
\qquad
	\begin{xy} (0,0)="O",
	"O"+<0cm,0.7cm>="11"*{{\bf 3}},
	"11"+<-0.3cm,-0.5cm>="21"*{{\bf 1}},
	"21"+<-0.3cm,-0.5cm>="31",
	"21"+/r0.6cm/="22"*{{\bf 2}},
	"31"+/r0.6cm/="32",
	"32"+/r0.6cm/="33"*{{\bf 1}},
	
	\ar@{-}"11"+/dl/;"21"+/u/<2pt>
	\ar@{-}"11"+/dr/;"22"+/u/<-2pt>
	\ar@{-}"22"+/dr/;"33"+/u/<-2pt>
	\end{xy}.
	\]
and
\[
M={\bf 1} 
\oplus 
	\begin{xy} (0,0)="O",
	"O"+<0.3cm,0.25cm>="11"*{{\bf 2}},
	"11"+<-0.3cm,-0.5cm>="21",
	"21"+/r0.6cm/="22"*{{\bf 1}},
	
	\ar@{-}"11"+/dr/;"22"+/u/<-2pt>
	\end{xy}
\oplus \left(
	\begin{xy} (0,0)="O",
	"O"+<0cm,0.7cm>="11"*{1},
	"11"+<-0.3cm,-0.5cm>="21"*{\bf 2},
	"21"+<-0.3cm,-0.5cm>="31",
	"31"+<-0.3cm,-0.5cm>="41",
	"21"+/r0.6cm/="22"*{\bf 3},
	"31"+/r0.6cm/="32"*{\bf 1},
	"32"+/r0.6cm/="33"*{\bf 2},
	"41"+/r0.6cm/="42",
	"42"+/r0.6cm/="43",
	"43"+/r0.6cm/="44"*{\bf 1},
	
	\ar@{-}"21"+/dr/;"32"+/u/<-2pt>
	\ar@{-}"22"+/dl/;"32"+/u/<2pt>
	\ar@{-}"22"+/dr/;"33"+/u/<-2pt>
	\ar@{-}"33"+/dr/;"44"+/u/<-2pt>
	\end{xy}
	\right)
\]
in $\underline{\Sub}^{\mathbb{Z}}\Pi_w$,
where the graded projective $\Pi_w$-modules are removed, 
and the degree zero parts are denoted by bold numbers.
\end{example}
The following proposition follows from Proposition \ref{pii}.
\begin{proposition}\label{Mleq}
$M=M_{\leq 0}$.
\end{proposition}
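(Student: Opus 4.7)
The plan is straightforward: unpack the definition of the grading shift and reduce to a direct consequence of Lemma \ref{iwi} (equivalently Proposition \ref{pii}). By definition of the shift, $((\Pi_{c^{(0)}\cdots c^{(i)}})(i))_k = (\Pi_{c^{(0)}\cdots c^{(i)}})_{k+i}$, so
\[
M_k = \bigoplus_{i=0}^{m} (\Pi_{c^{(0)}\cdots c^{(i)}})_{k+i},
\]
and showing $M = M_{\leq 0}$ amounts to showing $M_k = 0$ for every $k \geq 1$, i.e.\ that $(\Pi_{c^{(0)}\cdots c^{(i)}})_{\ell} = 0$ whenever $\ell \geq i+1$.

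For each fixed $i$, observe that $w^{(i)} := c^{(0)}c^{(1)}\cdots c^{(i)}$ is itself a $c$-sortable element, since its expression $c^{(0)}\cdots c^{(i)}$ is a prefix of the $c$-sortable decomposition of $w$ and hence inherits the support containment $\supp(c^{(i)})\subset\cdots\subset\supp(c^{(0)})$. Applying Lemma \ref{iwi} to $w^{(i)}$ (with $i$ playing the role of $m$) gives $(\Pi/I_{w^{(i)}})_{\geq i+1}=0$, i.e.\ $(\Pi_{c^{(0)}\cdots c^{(i)}})_\ell = 0$ for every $\ell \geq i+1$, which is exactly what we need.

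Combining these two observations yields $M_k = 0$ for all $k \geq 1$, hence $M = M_{\leq 0}$. There is no real obstacle here: the whole point of the preceding sequence of lemmas (culminating in Proposition \ref{pii}) is precisely to control the top graded degrees of $\Pi_{c^{(0)}\cdots c^{(i)}}$, and once those are in hand the statement follows by inspection of the shift. The only thing worth being careful about is the bookkeeping between ``shift by $(i)$'' and ``degree $\leq 0$''; the sign convention in the definition of $X(j)$ is what makes each summand $(\Pi_{c^{(0)}\cdots c^{(i)}})(i)$ sit in degrees $\leq 0$ after shifting, rather than in degrees $\geq 0$.
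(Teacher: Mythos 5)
Your proof is correct and follows essentially the same route as the paper: the paper's one-line argument rewrites $M=\bigoplus_i(\Pi_{c^{(0)}\cdots c^{(i)}})_{\leq i}(i)=\bigoplus_i\bigl((\Pi_{c^{(0)}\cdots c^{(i)}})(i)\bigr)_{\leq 0}$ using Proposition \ref{pii}, whose second equality is exactly your observation (via Lemma \ref{iwi} applied to the $c$-sortable prefix $c^{(0)}\cdots c^{(i)}$) that $(\Pi_{c^{(0)}\cdots c^{(i)}})_{\geq i+1}=0$. You have merely unpacked the same degree bookkeeping explicitly, so there is nothing to add.
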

\begin{proof}
We have $M=\bigoplus _{i=0}^m(\Pi_{c^{(0)}\cdots c^{(i)}})_{\leq i}(i)=\bigoplus _{i=0}^m(\Pi_{c^{(0)}\cdots c^{(i)}})(i)_{\leq 0}=M_{\leq 0}$.
\end{proof}
By the following two propositions, we show that this $M$ satisfies the axioms of tilting objects.
Note that, by Lemma \ref{iwi}, $(\Pi_{w})_{\leq i}=\Pi_{w}$ holds for $i\geq m$, and therefore, we have
\[M=\bigoplus _{i=0}^m(\Pi_{c^{(0)}\cdots c^{(i)}})_{\leq i}(i)=\bigoplus _{i\geq 0}(\Pi_{w})_{\leq i}(i)=\bigoplus _{i\geq 0}(\Pi_{w}(i))_{\leq 0}\]
in $\underline{\Sub}^{\mathbb{Z}}\Pi_w$ by Proposition \ref{pii}.
\begin{proposition}\label{M-orthogonal}
We have $\underline{\Hom}_{\Pi_{w}}^{\mathbb{Z}}(M,M[j])=0$ for any $j\neq 0$.
\end{proposition}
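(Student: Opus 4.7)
The plan is to combine the ungraded cluster tilting property with an analysis of graded support via the Frobenius structure on $\Sub^{\mathbb{Z}}\Pi_w$. A key starting observation is that $\Sub^{\mathbb{Z}}\Pi_w$ is closed under graded submodules: a graded submodule of a graded submodule of a graded free $\Pi_w$-module is itself a graded submodule of a graded free $\Pi_w$-module. Consequently, syzygies in the Frobenius sense remain in $\Sub^{\mathbb{Z}}\Pi_w$, and for $X, Y \in \Sub^{\mathbb{Z}}\Pi_w$ the group $\underline{\Hom}_{\Pi_w}^{\mathbb{Z}}(X, Y[1])$ injects into the graded Ext $\Ext^1_{\Pi_w}^{\mathbb{Z}}(X, Y)$.

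For $j = 1$, I would use the graded/ungraded decomposition
\[\Ext^1_{\Pi_w}(T, T) = \bigoplus_{k \in \mathbb{Z}} \Ext^1_{\Pi_w}^{\mathbb{Z}}(M, M(k)),\]
where $T$ denotes the ungraded module underlying $M$. By the observation just preceding the proposition, $T$ agrees, up to projective-injective summands, with the cluster tilting object of Proposition~\ref{birs}(e), so $\Ext^1_{\Pi_w}(T, T) = 0$. Hence every graded component vanishes; in particular $\Ext^1_{\Pi_w}^{\mathbb{Z}}(M, M) = 0$, giving $\underline{\Hom}_{\Pi_w}^{\mathbb{Z}}(M, M[1]) = 0$.

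For $|j| \geq 2$, the analogous ungraded vanishing fails (in the $2$-Calabi--Yau stable category $\underline{\Sub}\,\Pi_w$, $\Ext^2_{\Pi_w}(T, T) \simeq \kD\,\underline{\End}(T)$ is typically nonzero), so the grading must be used essentially. I would exploit the short exact sequences
\[0 \to (\Pi_w(i))_{\geq 1} \to \Pi_w(i) \to (\Pi_w(i))_{\leq 0} \to 0,\]
whose middle term is projective-injective, yielding stable-category isomorphisms $L_i \simeq (\Pi_w(i))_{\geq 1}[1]$ for the summands $L_i := (\Pi_w(i))_{\leq 0}$ of $M$. Iterating this syzygy construction produces modules whose graded support lies in strictly higher degrees than $M$, which by Proposition~\ref{pii} is concentrated in degrees between $-m$ and $0$. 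The resulting grading asymmetry, combined with Proposition~\ref{tran} (closure of $\Sub^{\mathbb{Z}}\Pi_w$ under truncations), should force the desired vanishing, and the negative-$j$ case is handled dually via cosyzygies.

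The hardest step will be making this grading-based argument fully rigorous for all $|j| \geq 2$. The graded stable category $\underline{\Sub}^{\mathbb{Z}}\Pi_w$ is not $2$-Calabi--Yau in the same form as the ungraded one; any analog of Serre duality carries a grading-shift twist, and we must confirm that the ``diagonal'' graded component at shift $(0)$ is exactly the one where the cohomology vanishes. Careful bookkeeping of graded supports through iterated (co)syzygies, together with the bounded degree range of $M$, appears to be the technically delicate step.
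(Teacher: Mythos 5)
Your proposal does not close the case that actually carries the content of the proposition. The $j=1$ step is fine (though note the paper never needs the ungraded cluster-tilting property here), but for $|j|\ge 2$ and for negative $j$ you only describe a strategy and explicitly defer the hard part (``should force the desired vanishing'', ``the hardest step will be making this rigorous''), so there is a genuine gap exactly where the statement is nontrivial. Moreover the two things you flag as the technical obstacles are misdirections: no graded analogue of the $2$-Calabi--Yau/Serre duality is needed anywhere, and the negative shifts do not require cosyzygies at all.

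The missing step is elementary and is how the paper argues. From the exact sequences $0\to(\Pi_w(i))_{\geq 1}\to\Pi_w(i)\to(\Pi_w(i))_{\leq 0}\to 0$ you already wrote down, one gets $(\Omega M)_{\leq 0}=0$; and since $\Pi_w$ is positively graded, a module concentrated in degrees $\geq 1$ has its projective cover, hence its syzygy, concentrated in degrees $\geq 1$, so by induction $(\Omega^j M)_{\leq 0}=0$ for all $j\geq 1$. Combined with $M=M_{\leq 0}$ (Proposition \ref{Mleq}), a degree-zero morphism between a module concentrated in degrees $\leq 0$ and one concentrated in degrees $\geq 1$ vanishes in either direction, so
\[
\Hom_{\Pi_w}^{\mathbb{Z}}(M,\Omega^j M)=0 \quad\text{and}\quad \Hom_{\Pi_w}^{\mathbb{Z}}(\Omega^j M,M)=0 \qquad (j\geq 1).
\]
Since $\Omega$ is the inverse shift on the stable category, the first vanishing gives $\underline{\Hom}_{\Pi_w}^{\mathbb{Z}}(M,M[-j])=0$ and the second gives $\underline{\Hom}_{\Pi_w}^{\mathbb{Z}}(M,M[j])\simeq\underline{\Hom}_{\Pi_w}^{\mathbb{Z}}(\Omega^j M,M)=0$ for all $j\geq 1$. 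In particular the positive and negative shifts are handled simultaneously by the same syzygy computation, which also makes your separate $j=1$ argument unnecessary; what your sketch lacks is precisely the observation that positivity of the grading propagates $(\Omega M)_{\leq 0}=0$ to all higher syzygies and that both Hom directions then die against $M=M_{\leq 0}$.
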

\begin{proof}
For any $0\leq i$, we have a short exact sequence,
	\begin{center}
	$0\to (\Pi_w)(i)_{\geq 1}\to (\Pi_w)(i)\to (\Pi_w)(i)_{\leq 0}\to 0$.
	\end{center}
Since $((\Pi_w)(i)_{\geq 1})_{\leq 0}=0$,
we have \[(\Omega M)_{\leq 0}=\bigoplus_{i\geq 0}\bigl(\Omega\left( \Pi_{w}(i)_{\leq 0} \right)\bigr)_{\leq 0}=\bigoplus_{i\geq 0}\bigl(\Pi_{w}(i)_{\geq 1}\bigr)_{\leq 0}=0.\]
Since $\Pi_w$ is positively graded,
we have $\bigl(\Omega^j (M)\bigr)_{\leq 0}=0$ for $j\geq 1$.
Therefore
	\begin{center}
	$\Hom^{\mathbb{Z}}_{\Pi_w}(M,\Omega^j (M))=0$ and $\Hom^{\mathbb{Z}}_{\Pi_w}(\Omega^j (M),M)=0$
	\end{center}
hold for any $j\geq 1$ by Proposition \ref{Mleq}.
The first equality implies $\underline{\Hom}_{\Pi_{w}}^{\mathbb{Z}}(M,M[-j])=0$ for $j\geq 1$,
and the second equality implies $\underline{\Hom}_{\Pi_{w}}^{\mathbb{Z}}(M,M[j])=0$ for $j\geq 1$.
\end{proof}
Next we prove that $M$ satisfies the second axiom of tilting objects.
Since $(\Pi_w)_0=kQ^{(1)}$ by Proposition \ref{pii}, we regard a  $kQ^{(1)}$-module $X$ as a graded $\Pi_w$-module concentrated in degree $0$.
For an integer $i$, let $\mod^{\leq i}\Pi_{w}$ be the full subcategory of $\mod^{\mathbb{Z}}\Pi_{w}$ of modules $X$ satisfying $X=X_{\leq i}$.
\begin{proposition}\label{M-generate}
We have $\underline{\Sub}^{\mathbb{Z}}\Pi_w=\thick M$.
\end{proposition}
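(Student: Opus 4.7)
The plan is a two-stage reduction. First I use the truncation triangles of Proposition \ref{tran} to cut down to single-degree modules, then I apply the tilting $kQ^{(1)}$-module from Corollary \ref{cortilt} to reduce further to the $L_w^{p_u}$, and finally I identify these inside $\thick M$ by a filtration of the summands of $M$.

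For any $X\in\Sub^{\mathbb{Z}}\Pi_w$ the grading support is bounded, because it is constrained by the degree range of $\Pi_w$, which is $[0,m]$ by Lemma \ref{iwi}. The short exact sequence
\[
0\to X_{\geq i+1}\to X\to X_{\leq i}\to 0
\]
is a conflation in the Frobenius category $\Sub^{\mathbb{Z}}\Pi_w$ by Proposition \ref{tran}, and therefore a triangle in $\underline{\Sub}^{\mathbb{Z}}\Pi_w$. An induction on the length of the grading support of $X$ reduces the proof to the case $X=X_{[i,i]}$ concentrated in a single degree $i$. Such an $X$ is then a $kQ^{(1)}$-module placed in degree $i$, since $(\Pi_w)_0=kQ^{(1)}$ by Proposition \ref{pii}.

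For the single-degree case, the tilting $kQ^{(1)}$-module $T=\bigoplus_u L_w^{p_u}$ of Corollary \ref{cortilt} induces a torsion pair on $\mod kQ^{(1)}$; decomposing $X$ accordingly gives two short sequences (one an $\add T$-resolution, one an $\add T$-coresolution) that, placed in degree $i$, stay inside $\Sub^{\mathbb{Z}}\Pi_w$ and thus yield triangles in $\underline{\Sub}^{\mathbb{Z}}\Pi_w$. Thus the claim further reduces to showing that each $L_w^{p_u}$ placed in any admissible degree lies in $\thick M$. For this last step I would analyze the summands $(\Pi_w e_u)(i)_{\leq 0}$ of $M$: Lemma \ref{c} describes the single-degree pieces $(\Pi_w e_u)_\ell$ explicitly, and iterated application of Proposition \ref{tran} realises each such summand (and, via the conflation $0 \to (\Pi_w e_u)(i)_{\geq 1} \to (\Pi_w e_u)(i) \to (\Pi_w e_u)(i)_{\leq 0} \to 0$ with projective middle term, also its grading-shifted sibling) as a finite tower of cones of these single-degree pieces, including the required $L_w^{p_u}$.

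The main obstacle is circularity: the tilting reduction already assumes the single-degree pieces $L_w^{p_u}$ belong to $\thick M$, while the peeling argument produces other single-degree pieces (the lower graded parts of $\Pi_w e_u$) whose containment in $\thick M$ needs the same reduction. I would resolve this by a simultaneous induction on the pair (grading length, $\dim_k X$), with base case provided by the top pieces identified explicitly by Lemma \ref{c}, so that each peeling step strictly decreases the induction parameter and the whole argument terminates.
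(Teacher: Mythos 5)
Your opening reduction (truncation conflations from Proposition \ref{tran}, induction on the grading length, so that it suffices to treat modules concentrated in one degree) is exactly how the paper begins. The gap is in what you do with a single-degree module $X$. A tilting module $T$ over a hereditary algebra does \emph{not} give an arbitrary module a finite $\add T$-resolution or coresolution, and the canonical torsion sequence does not help: the torsion-free quotient of $X$ is not shown to lie in $\Sub^{\mathbb{Z}}\Pi_w$, and every short exact sequence you use must have all three terms in $\Sub^{\mathbb{Z}}\Pi_w$ before it yields a triangle in the stable category. What is actually available, and what the paper uses, is much smaller: since $kQ^{(1)}$ is hereditary, $X$ has a two-term resolution by free modules, so everything in degree $i$ reduces to $kQ^{(1)}(\mp i)$; and only the free module $kQ^{(1)}$ needs $T$, via the coresolution $0\to kQ^{(1)}\to T_0\to T_1\to 0$ (tilting $=$ cotilting over a hereditary algebra). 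So the correct target of the reduction is $kQ^{(1)}$ in each degree, not the modules $L_w^{p_u}$ in each degree.

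Your closing induction does not actually resolve the circularity you flag: the parameter $(\text{grading length},\dim_k X)$ need not decrease, because the error term created when you peel a shifted projective can be larger than what you started with. The paper's induction is on the shift, run separately in the two directions, and this is what makes it terminate. For $kQ^{(1)}(i)$ with $i\geq 0$ one uses $0\to(\Pi_w)_{[1,i]}(i)\to(\Pi_w)_{\leq i}(i)\to kQ^{(1)}(i)\to 0$, whose middle term is a direct summand of $M$ (or graded projective) and whose left term is filtered by degrees already handled; the base case is that $kQ^{(1)}=(\Pi_w)_0$ is itself a summand of $M$ -- your ``base case from Lemma \ref{c}'' plays no such role. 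For the opposite direction one uses the coresolution of $kQ^{(1)}$ by $\add T$ and then, for each $u$, the sequence $0\to Te_u(-i)\to\Pi_we_u(m_{p_u})(-i)\to(\Pi_we_u(m_{p_u}))_{\leq -1}(-i)\to 0$: the middle term is graded projective and the third term is concentrated in degrees $\leq i-1$, so it is covered by the inductive hypothesis that everything in $\mod^{\leq i-1}\Pi_w\cap\Sub^{\mathbb{Z}}\Pi_w$ already lies in $\thick M$. Your conflation with projective middle term is the right germ of this second step, but without the degreewise induction and the pivotal role of the free module $kQ^{(1)}$ the argument as proposed does not close.
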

\begin{proof}
Let$X\in\Sub^{\mathbb{Z}}\Pi_w$.
We show that $X\in\thick M$.
By Proposition \ref{tran}, we have $X_{i}\in\Sub^{\mathbb{Z}}\Pi_w$ for any $i\in\mathbb{Z}$.
Since $X$ has a finite filtration $\{ X_{\geq j} \mid j\in\mathbb{Z} \}$, 
it is enough to show that $X_{i}\in\thick M$ for any $i\in\mathbb{Z}$.
Since each $X_{i}$ is a $kQ^{(1)}$-module and the global dimension of $kQ^{(1)}$ is at most one,
it is enough to show that $kQ^{(1)}(i)\in\thick M$ for any $i\in\mathbb{Z}$.

Firstly, we show $kQ^{(1)}(i)\in \thick M$ for any $i\geq 0$ by induction on $i$.
Since $M$ has a direct summand $(\Pi_w)_0=kQ^{(1)}$, we have $kQ^{(1)}\in\thick M$.
Assume $kQ^{(1)}(j)\in \thick M$ for $0\leq j\leq i-1$.
Consider a short exact sequence
	\begin{align}\label{prf1}
	0\to (\Pi_w)_{[1,i]}(i)\to (\Pi_w)_{\leq i}(i)\to (\Pi_w)_0(i)\to 0.
	\end{align}
By taking a finite filtration of $(\Pi_w)_{[1,i]}(i)$ and the inductive hypothesis, 
we conclude that $(\Pi_w)_{[1,i]}(i)\in\thick M$.
Since $(\Pi_w)_{\leq i}(i)$ is a direct summand of M or a graded projective $\Pi_{w}$-module,
we have $kQ^{(1)}(i)=(\Pi_w)_0(i)\in\thick M$ by (\ref{prf1}).
Consequently, we have that $X\in\thick M$ for any $X\in\mod^{\leq 0}\Pi_{w}\cap\Sub^{\mathbb{Z}}\Pi_w$.

Secondly, we show that $kQ^{(1)}(-i)\in \thick M$ for any $i \geq 0$ by induction on $i$.
Assume $kQ^{(1)}(-j)\in \thick M$ for $0 \leq j \leq i-1$.
Thus we have $X\in\thick M$ for any $X\in\mod^{\leq i-1}\Pi_{w}\cap\Sub^{\mathbb{Z}}\Pi_w$. 
By Theorem \ref{airt}, $T=\bigoplus_{u\in Q_{0}^{(1)}}(\Pi_{w}e_{u}(m_{p_{u}}))_{0}$ is a tilting $kQ^{(1)}$-module.
There exists a short exact sequence
	\begin{align*}
	0\to kQ^{(1)}\to T_0\to T_1\to 0,
	\end{align*}
where $T_0,T_1\in \add T$.
Therefore it is enough to show that $T(-i)\in \thick M$.
For each $u\in Q_{0}^{(1)}$, take a short exact sequence 
	\begin{align}\label{prf2}
	0\to Te_{u}(-i)\to \Pi_{w}e_{u}(m_{p_{u}})(-i) \to \Pi_{w}e_{u}(m_{p_{u}})_{\leq -1}(-i)\to 0.
	\end{align}
The second term is a graded projective $\Pi_w$-module.
The third term belongs to $\thick M$ since $\Pi_{w}e_{u}(m_{p_{u}})_{\leq -1}(-i)$ is in $\mod^{\leq i-1}\Pi_{w}$.
Consequently, we have $T(-i)\in \thick M$ by (\ref{prf2}).
\end{proof}
Then we have the main theorem of this section.
\begin{theorem}\label{thm}
For a $c$-sortable element $w=c^{(0)}c^{(1)}\cdots c^{(m)}$,
let $M=\bigoplus _{i=0}^m(\Pi_{c^{(0)}\cdots c^{(i)}})(i)$.
Then $M$ is a tilting object in $\underline{\Sub}^{\mathbb{Z}}\Pi_w$.
\end{theorem}
\begin{proof}
By Propositions \ref{M-orthogonal}, and \ref{M-generate}, $M$ is a tilting object in $\underline{\Sub}^{\mathbb{Z}}\Pi_w$.
\end{proof}
\begin{remark}
It was shown by Yamaura \cite{Y} that, for a finite dimensional self-injective positively graded algebra $A$, the stable category $\underline{\mod}^{\mathbb{Z}}A$ has a tilting object $\bigoplus_{i\geq 0}(A(i))_{\leq 0}$ if $A_0$ has finite global dimension.
Our tilting object $M$ in $\underline{\Sub}^{\mathbb{Z}}\Pi_w$ is an analog of this since $M=\bigoplus _{i\geq 0}(\Pi_{w})_{\leq i}(i)=\bigoplus _{i\geq 0}(\Pi_{w}(i))_{\leq 0}$ holds.
\end{remark}
\section{The endomorphism algebra of the tilting object}\label{theendomorphism}
In this section, we calculate the endomorphism algebra of the tilting object which was constructed in Definition \ref{def-tilting-c-sort}.
The aim of this section is to prove Theorems \ref{usubz=dend} and \ref{endalg}.
Throughout this section, let $Q$ be a finite acyclic quiver.
\subsection{A morphism from $\End_{\Pi_w}^{\mathbb{Z}}(M)$ to $\End_{kQ^{(1)}}(M_0)$}\label{welldef}
Firstly, we give another description of the tilting object which was constructed in Definition \ref{def-tilting-c-sort}.
Throughout this section, we use the following notation.

\begin{definition}\label{def-p-m-M}
Let $w=s_{u_1}s_{u_2}\cdots s_{u_l}$ be a reduced expression of an element $w$ in the Coxeter group of $Q$.
We use the same notation as after Proposition \ref{airtprop}, that is,
\begin{center}
$p_{u}=\Max\{1 \leq j \leq l \mid u_{j}=u\},$ \hspace{0.4cm} for $u \in \supp(w)$,\\
$m_{i} = \sharp\{ 1\leq j \leq i-1\mid u_j=u_i\},$ \hspace{0.4cm} for $1\leq i \leq l$.
\end{center}
Moreover, for $1 \leq i \leq l$, put
\begin{align*}
M^{i}=(\Pi/I_{u_1\cdots u_i})e_{u_i}(m_i), \hspace{0.5cm} M=\bigoplus_{i=1}^{l}M^{i},\\
P=\bigoplus_{u \in \supp(w)}M^{p_{u}}, \hspace{0.5cm} \hspace{0.5cm} T=P_{0}.
\end{align*}
\end{definition}
Note that $P\in \proj^{\mathbb{Z}}\Pi_{w}$ holds since $\Pi_{w}=\bigoplus_{u \in \supp(w)}M^{p_{u}}(-m_{p_{u}})$.
If $w=s_{u_1}s_{u_2}\cdots s_{u_l}=c^{(0)}c^{(1)}\cdots c^{(m)}$ is a $c$-sortable element, then we have an isomorphism
\begin{align}\label{M-in-usubz}
\bigoplus_{i=1}^{l}M^{i}\simeq \bigoplus _{i=0}^m(\Pi_{c^{(0)}\cdots c^{(i)}})(i)
\end{align}
in $\underline{\Sub}^{\mathbb{Z}}\Pi_{w}$.
In fact, for any $1\leq i\leq l$, $M^{i}=(\Pi/I_{c^{(0)}\cdots c^{(m_{i})}})e_{u_{i}}(m_{i})$ holds by Proposition \ref{airtprop}, and for any $0\leq j\leq m$, if $u\in Q_{0}\setminus\supp(c^{(j)})$, then $(\Pi/I_{c^{(0)}\cdots c^{(j)}})e_{u}=\Pi_{w}e_{u}$ holds, which is projective.
Therefore we have an isomorphism (\ref{M-in-usubz}).
As we have shown in Theorem \ref{thm},  $M=\bigoplus_{i=1}^{l}M^{i}$ is a tilting object in $\underline{\Sub}^{\mathbb{Z}}\Pi_w$.

Before starting the calculating of the endomorphism algebra $\underline{\End}_{\Pi_{w}}^{\mathbb{Z}}(M)$, we state a triangle equivalence induced from a tilting object.
Let $\mathcal{T}$ be the stable category of a Frobenius category, 
and assume that $\mathcal{T}$ is Krull-Schmidt.
If there exists a tilting object $U$ in $\mathcal{T}$, then it follows from \cite[(4.3)]{K} that there exists a triangle equivalence
\begin{align}\label{eq-tilting}
\mathcal{T} \simeq {\mathsf K}^{{\rm b}}(\proj \End_{\mathcal{T}}(U)),
\end{align}
where ${\mathsf K}^{{\rm b}}(\proj \End _{\mathcal{T}}(U))$ is the homotopy category of bounded complexes of projective $\End _{\mathcal{T}}(U)$-modules.
In this subsection, we show that the global dimension of $\underline{\End}_{\Pi_{w}}^{\mathbb{Z}}(M)$ is finite and we have the following theorem.
\begin{theorem}\label{usubz=dend}
Let $w=s_{u_1}s_{u_2}\cdots s_{u_l}=c^{(0)}c^{(1)}\cdots c^{(m)}$ be a $c$-sortable element and $M=\bigoplus_{i=1}^{l}M^{i}$ be a tilting object in $\underline{\Sub}^{\mathbb{Z}}\Pi_{w}$.
Then the global dimension of $\underline{\End}_{\Pi_{w}}^{\mathbb{Z}}(M)$ is finite and we have a triangle equivalence \[\underline{\Sub}^{\mathbb{Z}}\Pi_{w}\simeq{\mathsf D}^{{\rm b}}(\underline{\End}_{\Pi_{w}}^{\mathbb{Z}}(M)).\]
\end{theorem}
\begin{proof}
By Proposition \ref{prop-gldim-fin}, the global dimension of $\underline{\End}_{\Pi_{w}}^{\mathbb{Z}}(M)$ is finite.
By Theorem \ref{thm} and a triangle equivalence (\ref{eq-tilting}), we have the assertion.
\end{proof}
We state another theorem of this section.
Looking at the degree zero part of graded modules, we have the following functor
\[\mathbb{F}:=(-)_{0}: \mod^{\mathbb{Z}}\Pi \to \mod kQ.\]
The functor $\mathbb{F}$ induces the following morphism of algebras
	\[
	F:=\mathbb{F}_{M,M} : \End_{\Pi_w}^{\mathbb{Z}}(M) \to \End_{kQ}(M_0)
	\]
given by $F(f)=f|_{M_{0}}$.
Then we claim the following.
\begin{theorem}\label{endalg}
Let $w$ be a $c$-sortable element. The morphism $F$ induces an isomorphism of algebras $\underline{F}:\underline{\End}_{\Pi_w}^{\mathbb{Z}}(M) \xto{\sim} \End_{kQ}(M_0)/[T]$, which makes the following diagram commutative
\[
\xymatrix{
	\End_{\Pi_w}^{\mathbb{Z}}(M) \ar[r]^{F} \ar[d] & \End_{kQ}(M_0) \ar[d] \\
	\underline{\End}_{\Pi_w}^{\mathbb{Z}}(M) \ar[r]^(0.4){\underline{F}} & \End_{kQ}(M_0)/[T],
	}
\]
where $[T]$ is an ideal of $\End_{kQ}(M_0)$ consisting of morphisms factoring through objects in $\add T$, 
and vertical morphisms are canonical surjections.
\end{theorem}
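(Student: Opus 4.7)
My plan is to prove Theorem \ref{endalg} by verifying that $F$ descends to a well-defined algebra homomorphism $\underline F$ and then checking that $\underline F$ is bijective. The underlying object is the exact functor $\mathbb F = (-)_0 : \mod^{\mathbb Z}\Pi_w \to \mod kQ^{(1)}$. For well-definedness, I first observe that $\mathbb F$ sends the graded projective direct summand $P = \bigoplus_{u\in\supp(w)} M^{p_u}$ of $M$ to $T = \bigoplus_u L_w^{p_u}$ by Lemma \ref{c}, so any morphism $M \to M$ factoring through an object of $\add P$ restricts under $\mathbb F$ to a morphism in $[T]$. To deal with morphisms factoring through arbitrary graded projective $\Pi_w$-modules (including shifts $\Pi_w e_v(d)$ with $d \neq m_{p_v}$), I would use $M = M_{\leq 0}$ (Proposition \ref{Mleq}) together with the positive grading of $\Pi_w$ to bound the degrees of the relevant factorization and thereby reduce to factorings through $\add P$.

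For injectivity of $\underline F$, suppose $f \in \End^{\mathbb Z}_{\Pi_w}(M)$ satisfies $f|_{M_0} = \beta\alpha$ for morphisms $\alpha : M_0 \to T'$ and $\beta : T' \to M_0$ with $T' \in \add T$. Since $T = P_0$, pick $P' \in \add P$ with $P'_0 = T'$. A graded lift $\tilde\alpha : M \to P'$ exists because $M_0$ is a graded submodule of $M$ in $\Sub^{\mathbb Z}\Pi_w$ (by Proposition \ref{tran}) and $P'$ is injective there, while a graded lift $\tilde\beta : P' \to M$ is constructed using the projectivity of $P'$. Then $g := f - \tilde\beta\tilde\alpha$ vanishes on $M_0$, and since $M = M_{\leq 0}$ such a $g$ factors through $M_{\leq -1}$, which lies in $\Sub^{\mathbb Z}\Pi_w$ by Proposition \ref{tran}; embedding $M_{\leq -1}$ into a graded projective then exhibits $g$ as factoring through a graded projective, which completes the injectivity.

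For surjectivity of $\underline F$, given $g \in \End_{kQ^{(1)}}(M_0)$ I would construct a graded lift $\tilde g \in \End^{\mathbb Z}_{\Pi_w}(M)$ modulo $[T]$. Decomposing $g = (g_{ij})$ along $M_0 = \bigoplus_{i=1}^l L_w^i$, one lifts each component $g_{ij} : L_w^i \to L_w^j$ to a graded morphism $M^i \to M^j$ modulo $[T]$ using the defining short exact sequence $0 \to L_w^i \to (\Pi/I_{u_1\cdots u_i})e_{u_i} \to (\Pi/I_{u_1\cdots u_{i-1}})e_{u_i} \to 0$ together with the graded projective $\Pi_w e_{u_i}$ as a bridge; the remaining obstruction is measured by $\Ext^1$ and vanishes modulo $[T]$ by the tilting property of $T$ from Theorem \ref{airt}. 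This surjectivity step is the main obstacle, being nontrivial because $M^i$ is generated in degree $-m_i$ rather than in degree zero, so the $kQ^{(1)}$-morphism $g_{ij}$ between degree-zero layers only partially determines a graded $\Pi_w$-morphism $M^i \to M^j$; resolving the ambiguity requires careful use of the $c$-sortable structure of $w$ and the tilting property of $T$ to ensure the obstruction lies in $[T]$.
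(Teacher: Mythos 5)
Your outline breaks the proof into well-definedness, injectivity and surjectivity of $\underline F$, but in each of the first two parts there is a step that does not work as stated, and the third is left open. For well-definedness, the problematic case is a morphism factoring through a negatively shifted projective $M^{p_u}(a)$ with $a<0$: its degree-zero restriction factors through $(M^{p_u})_a$, which is not in $\add T$, and no amount of ``bounding degrees'' using $M=M_{\leq 0}$ and positivity of the grading reduces it to $\add P$. What is actually needed is Lemma \ref{facthr} (every morphism $M\to M(a)$ with $a<0$ factors through $\add P$), whose proof goes through the graded quiver presentation $Q_w$ of $\End_{\Pi_w}(M)$ and the combinatorial Lemma \ref{arrownega}, and this is precisely where $c$-sortability is used; your sketch contains no substitute for it. In the injectivity argument, the asserted ``graded lift $\tilde\beta:P'\to M$ by projectivity of $P'$'' does not exist for formal reasons: $P'$ is generated in degrees $-m_{p_u}\leq 0$, so prescribing a morphism only on the degree-zero part $P'_0=T'$ is a condition at the bottom of $P'$, not at its generators, and the existence of a graded morphism $P'\to M$ restricting to $\beta$ is exactly an instance of the surjectivity of $F$ (Proposition \ref{propsur}), which you cannot invoke at that point. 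Similarly, the concluding step ``$g$ vanishes on $M_0$, hence factors through $M_{\leq -1}$, hence through a graded projective'' is a non sequitur: $M_{\leq -1}$ embeds into a graded projective, but the induced map $M_{\leq -1}\to M$ need not extend along that embedding, so no factorization of $g$ through a projective follows. In fact the statement ``$F(g)=0$ implies $g$ is stably trivial'' is essentially the injectivity assertion itself and is what the paper proves by induction in Section 5.4 (via Lemma \ref{f1f2f3}).

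For surjectivity you acknowledge the gap yourself; the proposed $\Ext^1$-obstruction computation is only a plan and does not engage with the real difficulty that $M^i$ is generated in degree $-m_i$. In the paper all three points are resolved simultaneously by induction on the length of the reduced word: one passes from $w$ to $w'=s_{u_2}\cdots s_{u_l}$, which is $(s_{u_1}cs_{u_1})$-sortable (Lemma \ref{sur}), using the functor $\mathbb{G}=\Hom_{\Pi}(U,-)$, the graded Iyama--Reiten isomorphism (Theorem \ref{endred}), the compatibility of $\mathbb{G}$ with the APR reflection $R^{+}_{u_1}$ in degree zero (Lemma \ref{G0R0}), and the vanishing $\Hom_{kQ}(M^j_0,M^i_0)=0$ for $i<j$ from [AIRT]. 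Without this reduction, or an equally substantive replacement for Lemma \ref{facthr} and for the surjectivity and injectivity inductions, the proposal does not prove the theorem.
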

\begin{proof}
In Proposition \ref{induced}, we show that $F$ actually induces a morphism $\underline{F}$.
$\underline{F}$ is surjective by Proposition \ref{propsur}.
In Proposition \ref{prop-uF-inj}, we show that $\underline{F}$ is injective.
\end{proof}
In Subsection \ref{subendalg}, we show one theorem which we will use to prove Proposition \ref{propsur}.
\begin{example}
Let $Q$ be a quiver
	\begin{xy} (0,0)="O",
	"O"+<0cm,0.35cm>="11"*{1},
	"11"+<-0.8cm,-0.7cm>="21"*{2},
	"21"+/r1.6cm/="22"*{3},
	
	\ar"11"+/dl/;"21"+/u/
	\ar"21"+/r/;"22"+/l/
	\ar"11"+/dr/;"22"+/u/
	\end{xy}.
Let $w=s_1s_2s_3s_1s_2s_1$.
This is a $c$-sortable element.
In Example \ref{exampleA2tilting}, we have
\begin{align*}
M&=M^{1}\oplus M^{2}\oplus M^{3}\oplus M^{4}\oplus M^{5}\oplus M^{6} \\
&={\bf 1}
\oplus 
	\begin{xy} (0,0)="O",
	"O"+<0.3cm,0.25cm>="11"*{{\bf 2}},
	"11"+<-0.3cm,-0.5cm>="21",
	"21"+/r0.6cm/="22"*{{\bf 1}},
	\ar@{-}"11"+/dr/;"22"+/u/<-2pt>
	\end{xy}
\oplus \begin{xy} (0,0)="O",
	"O"+<0cm,0.5cm>="11"*{{\bf 3}},
	"11"+<-0.3cm,-0.5cm>="21"*{{\bf 1}},
	"21"+<-0.3cm,-0.5cm>="31",
	"21"+/r0.6cm/="22"*{{\bf 2}},
	"31"+/r0.6cm/="32",
	"32"+/r0.6cm/="33"*{{\bf 1}},
	\ar@{-}"11"+/dl/;"21"+/u/<2pt>
	\ar@{-}"11"+/dr/;"22"+/u/<-2pt> 
	\ar@{-}"22"+/dr/;"33"+/u/<-2pt>
	\end{xy}
\oplus \left(
	\begin{xy} (0,0)="O",
	"O"+<0cm,0.7cm>="11"*{1},
	"11"+<-0.3cm,-0.5cm>="21"*{\bf 2},
	"21"+<-0.3cm,-0.5cm>="31",
	"31"+<-0.3cm,-0.5cm>="41",
	"21"+/r0.6cm/="22"*{\bf 3},
	"31"+/r0.6cm/="32"*{\bf 1},
	"32"+/r0.6cm/="33"*{\bf 2},
	"41"+/r0.6cm/="42",
	"42"+/r0.6cm/="43",
	"43"+/r0.6cm/="44"*{\bf 1},
	\ar@{-}"21"+/dr/;"32"+/u/<-2pt>
	\ar@{-}"22"+/dl/;"32"+/u/<2pt>
	\ar@{-}"22"+/dr/;"33"+/u/<-2pt> 
	\ar@{-}"33"+/dr/;"44"+/u/<-2pt> 
	\end{xy}
	\right)
\oplus \left(
	\begin{xy} (0,0)="O",
	"O"+<0cm,1cm>="11"*{2},
	"11"+<-0.3cm,-0.5cm>="21"*{\bf 3},
	"21"+<-0.3cm,-0.5cm>="31"*{\bf 1},
	"31"+<-0.3cm,-0.5cm>="41",
	"41"+<-0.3cm,-0.5cm>="51",
	"21"+/r0.6cm/="22"*{1},
	"31"+/r0.6cm/="32"*{\bf 2},
	"32"+/r0.6cm/="33"*{\bf 3},
	"41"+/r0.6cm/="42",
	"42"+/r0.6cm/="43"*{\bf 1},
	"43"+/r0.6cm/="44"*{\bf 2},
	"51"+/r0.6cm/="52",
	"52"+/r0.6cm/="53",
	"53"+/r0.6cm/="54",
	"54"+/r0.6cm/="55"*{\bf 1},
	\ar@{-}"11"+/dr/;"22"+/u/<-2pt> 
	\ar@{-}"21"+/dl/;"31"+/u/<2pt>
	\ar@{-}"21"+/dr/;"32"+/u/<-2pt>
	\ar@{-}"32"+/dr/;"43"+/u/<-2pt>
	\ar@{-}"33"+/dl/;"43"+/u/<2pt>
	\ar@{-}"33"+/dr/;"44"+/u/<-2pt>
	\ar@{-}"44"+/dr/;"55"+/u/<-2pt>
	\end{xy}\right)
\oplus \left(
	\begin{xy} (0,0)="O",
	"O"+<0cm,0.7cm>="11"*{1},
	"11"+<-0.3cm,-0.5cm>="21"*{2},
	"21"+<-0.3cm,-0.5cm>="31"*{\bf 3},
	"31"+<-0.3cm,-0.5cm>="41"*{\bf 1},
	"21"+/r0.6cm/="22"*{3},
	"31"+/r0.6cm/="32"*{1},
	"32"+/r0.6cm/="33"*{2},
	"41"+/r0.6cm/="42",
	"42"+/r0.6cm/="43",
	"43"+/r0.6cm/="44"*{1},
	\ar@{-}"21"+/dr/;"32"+/u/<-2pt>
	\ar@{-}"22"+/dl/;"32"+/u/<2pt>
	\ar@{-}"22"+/dr/;"33"+/u/<-2pt> 
	\ar@{-}"31"+/dl/;"41"+/u/<2pt>
	\ar@{-}"33"+/dr/;"44"+/u/<-2pt> 
	\end{xy}\right)
\end{align*}
in $\Sub^{\mathbb{Z}}\Pi_{w}$, where the degree zero parts are denoted by bold numbers.
Therefore, we have $P=M^{3}\oplus M^{5}\oplus M^{6}$ and
\begin{align*}
T=P_{0}=M^{3}_{0}\oplus M^{5}_{0}\oplus M^{6}_{0}
	=\begin{xy} (0,0)="O",
	"O"+<0cm,0.5cm>="11"*{3},
	"11"+<-0.3cm,-0.5cm>="21"*{1},
	"21"+<-0.3cm,-0.5cm>="31",
	"21"+/r0.6cm/="22"*{2},
	"31"+/r0.6cm/="32",
	"32"+/r0.6cm/="33"*{1},
	\ar@{-}"11"+/dl/;"21"+/u/<2pt>
	\ar@{-}"11"+/dr/;"22"+/u/<-2pt> 
	\ar@{-}"22"+/dr/;"33"+/u/<-2pt>
	\end{xy}
\oplus
	\begin{xy} (0,0)="O",
	"O"+<0cm,1.25cm>="11",
	"11"+<-0.3cm,-0.5cm>="21"*{3},
	"21"+<-0.3cm,-0.5cm>="31"*{1},
	"31"+<-0.3cm,-0.5cm>="41",
	"41"+<-0.3cm,-0.5cm>="51",
	"21"+/r0.6cm/="22",
	"31"+/r0.6cm/="32"*{2},
	"32"+/r0.6cm/="33"*{3},
	"41"+/r0.6cm/="42",
	"42"+/r0.6cm/="43"*{1},
	"43"+/r0.6cm/="44"*{2},
	"51"+/r0.6cm/="52",
	"52"+/r0.6cm/="53",
	"53"+/r0.6cm/="54",
	"54"+/r0.6cm/="55"*{1},
	"33"+/r0.6cm/="",
	\ar@{-}"21"+/dl/;"31"+/u/<2pt>
	\ar@{-}"21"+/dr/;"32"+/u/<-2pt>
	\ar@{-}"32"+/dr/;"43"+/u/<-2pt>
	\ar@{-}"33"+/dl/;"43"+/u/<2pt>
	\ar@{-}"33"+/dr/;"44"+/u/<-2pt>
	\ar@{-}"44"+/dr/;"55"+/u/<-2pt>
	\end{xy}
\oplus
	\begin{xy} (0,0)="00",
	"00"+<0.3cm,0.25cm>="11"*{3},
	"11"+<-0.3cm,-0.5cm>="21"*{1},
	\ar@{-}"11"+/dl/;"21"+/u/
	\end{xy},
\end{align*}
\begin{align*}
M_{0}=M^{1}_{0}\oplus M^{2}_{0}\oplus M^{4}_{0} \oplus T
	=1
\oplus
	\begin{xy} (0,0)="O",
	"O"+<0.3cm,0.25cm>="11"*{2},
	"11"+<-0.3cm,-0.5cm>="21",
	"21"+/r0.6cm/="22"*{1},
	\ar@{-}"11"+/dr/;"22"+/u/<-2pt>
	\end{xy}
\oplus
	\begin{xy} (0,0)="O",
	"O"+<0cm,1cm>="11",
	"11"+<-0.3cm,-0.5cm>="21"*{2},
	"21"+<-0.3cm,-0.5cm>="31",
	"31"+<-0.3cm,-0.5cm>="41",
	"21"+/r0.6cm/="22"*{3},
	"31"+/r0.6cm/="32"*{1},
	"32"+/r0.6cm/="33"*{2},
	"41"+/r0.6cm/="42",
	"42"+/r0.6cm/="43",
	"43"+/r0.6cm/="44"*{1},
	\ar@{-}"21"+/dr/;"32"+/u/<-2pt>
	\ar@{-}"22"+/dl/;"32"+/u/<2pt>
	\ar@{-}"22"+/dr/;"33"+/u/<-2pt> 
	\ar@{-}"33"+/dr/;"44"+/u/<-2pt> 
	\end{xy}
\oplus
	T.
\end{align*}
It is easy to see that the algebra $\End_{kQ}(M_{0})/[T]$ is given by the following quiver with relations
\[
\Delta=\left[\xymatrix{ \bullet \ar[r]^{a} & \bullet \ar[r]^{b} & \bullet}\right], \quad ab=0.
\]
By Theorem \ref{endalg} or a direct calculation, we can see that the algebra $\underline{\End}_{\Pi_{w}}^{\mathbb{Z}}(M)$ is also given by the same quiver with relations.

We can describe the Auslander-Reiten quiver of $\underline{\Sub}^{\mathbb{Z}}\Pi_w$.
Let $K$ be the kernel of the canonical epimorphism $\Pi_{w}e_{2} \to S_{2}$, where $S_{2}$ is a simple module associated with the vertex $2$, and let $N$ be the cokernel of an inclusion $(\Pi_{w}e_{1})_{1} \to \Pi_{w}e_{2}$:
\[
K=
	\begin{xy} (0,0)="O",
	"O"+<0cm,1cm>="11",
	"11"+<-0.3cm,-0.5cm>="21"*{3},
	"21"+<-0.3cm,-0.5cm>="31"*{1},
	"31"+<-0.3cm,-0.5cm>="41",
	"41"+<-0.3cm,-0.5cm>="51",
	"21"+/r0.6cm/="22"*{{\bf1}},
	"31"+/r0.6cm/="32"*{2},
	"32"+/r0.6cm/="33"*{3},
	"41"+/r0.6cm/="42",
	"42"+/r0.6cm/="43"*{1},
	"43"+/r0.6cm/="44"*{2},
	"51"+/r0.6cm/="52",
	"52"+/r0.6cm/="53",
	"53"+/r0.6cm/="54",
	"54"+/r0.6cm/="55"*{1},
	
	\ar@{-}"21"+/dl/;"31"+/u/<2pt>
	\ar@{-}"21"+/dr/;"32"+/u/<-2pt>
	\ar@{-}"32"+/dr/;"43"+/u/<-2pt>
	\ar@{-}"33"+/dl/;"43"+/u/<2pt>
	\ar@{-}"33"+/dr/;"44"+/u/<-2pt>
	\ar@{-}"44"+/dr/;"55"+/u/<-2pt>
	\end{xy}
,\qquad
N=
	\begin{xy} (0,0)="O",
	"O"+<0cm,0.5cm>="11"*{2},
	"11"+<-0.3cm,-0.5cm>="21"*{3},
	"21"+<-0.3cm,-0.5cm>="31"*{1},
	"31"+<-0.3cm,-0.5cm>="41",
	"41"+<-0.3cm,-0.5cm>="51",
	"21"+/r0.6cm/="22"*{1},
	
	\ar@{-}"11"+/dr/;"22"+/u/<-2pt>
	\ar@{-}"21"+/dl/;"31"+/u/<2pt>
	\end{xy}.
\]
Then the Auslander-Reiten quiver of $\underline{\Sub}^{\mathbb{Z}}\Pi_w$ is the following one:
\[
\begin{xy} (0,0)="O",
	"O"+<1cm,0cm>="00",
	"00"+<3.2cm,0cm>="10"*{(\Pi_{w}e_{1})_{1}},
	"10"+<3.2cm,0cm>="20"*{K},
	"20"+<2.8cm,0cm>="30"*{N},
	"00"+<1.6cm,1cm>="11"*{\cdots},
	"00"+<1.6cm,-1cm>="1-1"*{\cdots},
	"10"+<1.6cm,1cm>="21"*{(\Pi_{w}e_{2})_{1}},
	"10"+<1.6cm,-1cm>="2-1"*{(\Pi_{w}e_{1})_{[0,1]}},
	"20"+<1.5cm,1cm>="31"*+[F]{(\Pi_{w}e_{1})_{0}},
	"20"+<1.5cm,-1cm>="3-1"*{(\Pi_{w}e_{1})_{2}(1)},
	"30"+<1.5cm,1cm>="41"*{(\Pi_{w}e_{1})_{[1,2]}(1)},
	"30"+<1.5cm,-1cm>="4-1"*+[F]{N_{0}},
	"30"+<3cm,0cm>="40"*{(\Pi_{w}e_{1})(1)},
	"41"+<3cm,0cm>="51"*+[F]{(\Pi_{w}e_{1})_{[0,1]}(1)},
	"4-1"+<3cm,0cm>="5-1"*{(\Pi_{w}e_{2})_{1}(1)},
	"40"+<3cm,0cm>="50"*{\cdots},
	
	
	\ar"11"+<0.5cm,-0.3cm>;"10"+<-0.5cm,0.3cm>
	\ar"1-1"+<0.5cm,0.3cm>;"10"+<-0.5cm,-0.3cm>
	
	\ar"10"+<0.5cm,0.3cm>;"21"+<-0.5cm,-0.3cm>
	\ar"10"+<0.5cm,-0.3cm>;"2-1"+<-0.5cm,0.3cm>
	
	\ar"21"+<0.5cm,-0.3cm>;"20"+<-0.2cm,0.3cm>
	\ar"2-1"+<0.5cm,0.3cm>;"20"+<-0.2cm,-0.3cm>
	
	\ar"20"+<0.2cm,0.3cm>;"31"+<-0.5cm,-0.3cm>
	\ar"20"+<0.2cm,-0.3cm>;"3-1"+<-0.5cm,0.3cm>	
	
	\ar"31"+<0.5cm,-0.3cm>;"30"+<-0.2cm,0.3cm>
	\ar"3-1"+<0.5cm,0.3cm>;"30"+<-0.2cm,-0.3cm>
	
	\ar"30"+<0.2cm,0.3cm>;"41"+<-0.5cm,-0.3cm>
	\ar"30"+<0.2cm,-0.3cm>;"4-1"+<-0.5cm,0.3cm>
	
	\ar"41"+<0.5cm,-0.3cm>;"40"+/ul/
	\ar"4-1"+<0.5cm,0.3cm>;"40"+/dl/
		
	\ar"40"+<0.2cm,0.3cm>;"51"+<-0.5cm,-0.3cm>
	\ar"40"+<0.2cm,-0.3cm>;"5-1"+<-0.5cm,0.3cm>
	
	\ar"51"+<0.5cm,-0.3cm>;"50"+/ul/
	\ar"5-1"+<0.5cm,0.3cm>;"50"+/dl/
\end{xy}
,
\]
where $M=(\Pi_{w}e_{1})_{0}\oplus N_{0}\oplus (\Pi_{w}e_{1})_{[0,1]}(1)$.
We see that the shape of the Auslander-Reiten quiver of $\underline{\Sub}^{\mathbb{Z}}\Pi_{w}$ is actually the same as that of $\mathsf{D}^{\rm b}(\underline{\End}_{\Pi_{w}}^{\mathbb{Z}}(M))$.
\end{example}
We first describe the quiver of $\End_{\Pi_w}(M)$.
We recall the following definition of a quiver $Q_{w}$ associated with a reduced expression $w=s_{u_1}s_{u_2}\cdots s_{u_l}$. 
This $Q_{w}$ was denoted by $Q(u_{1},\ldots,u_{l})$ in \cite[Subsection I\hspace{-.1em}I\hspace{-.1em}I. 4]{BIRSc}.
\begin{definition}\cite{BIRSc}
We define a quiver $Q_{w}$ associated with a reduced expression $w=s_{u_1}s_{u_2}\cdots s_{u_l}$ as follows:
\begin{itemize}
	\item vertices: $(Q_{w})_{0}=\{ 1, 2, \ldots, l \}$.
	
	A vertex $1\leq i \leq l$ in $Q_{w}$ is said to be {\it type $u \in Q_{0}$} if $u_{i}=u$.
	\item arrows: 
		\begin{itemize}
		
		\item[(a1)]
		For each $u\in \supp(w)$, draw an arrow from $j$ to $i$, where $i,j$ are vertices of type $u$, $i<j$, and there is no vertex of type $u$ between $i$ and $j$ {\rm(}we call these arrows {\it going to the left} {\rm)}.
				
		\item[(a2)]
		For each arrow $\alpha : u \to v \in Q_{1}$, draw an arrow $\alpha_{i}$ from $i$ to $j$, where $i<j$, $i$ is a vertex of type $u$, $j$ is a vertex of type $v$, there is no vertex of type $u$ between $i$ and $j$, and $j$ is the biggest vertex of type $v$ before the next vertex of type $u$ (we call these arrows {\it $Q$-arrows}).
		\item[(a3)]
		For each arrow $\alpha : u \to v \in Q_{1}$, draw an arrow $\alpha_{i}^{\ast}$ from $i$ to $j$, where $i<j$, $i$ is a vertex of type $v$, $j$ is a vertex of type $u$, there is no vertex of type $v$ between $i$ and $j$, and $j$ is the biggest vertex of type $u$ before the next vertex of type $v$ (we call these arrows {\it $Q^{\ast}$-arrows}).
		\end{itemize}
	\end{itemize}
\end{definition}
Note that the quiver $Q_{w}$ depends on the choice of a reduced expression of $w$.
\begin{example}\label{exquiver}
(a) Let $Q$ be the quiver 
	\begin{xy} (0,0)+<0cm,0.2cm>="O",
	"O"+<0cm,0.35cm>="11"*{1},
	"11"+<-0.6cm,-0.7cm>="21"*{2},
	"21"+/r1.2cm/="22"*{3},
	
	\ar_{\alpha}"11"+/dl/;"21"+/u/
	\ar_{\beta}"21"+/r/;"22"+/l/
	\ar^{\gamma}"11"+/dr/;"22"+/u/
	\end{xy}, 
and $w=s_{u_{1}}s_{u_{2}}s_{u_{3}}s_{u_{4}}s_{u_{5}}s_{u_{6}}=s_{1}s_{2}s_{3}s_{1}s_{2}s_{1}$.
Then we have the quiver $Q_{w}$ as follows:
$$
\begin{xy}
	(0,0)+<0cm,0.2cm>="O",
	"O"+<0cm,1cm>="3"*{3},
	"O"+<-1cm,0cm>="2"*{2},
	"O"+<-2cm,-1cm>="1"*{1},
	"1"+<3cm,0cm>="4"*{4},
	"2"+<3cm,0cm>="5"*{5},
	"4"+<2cm,0cm>="6"*{6},
	
	\ar"1"+/ur/;"2"+/dl/
	\ar"2"+/ur/;"3"+/dl/
	\ar@(u,l)"1"+/u/;"3"+/l/
	\ar"2"+/dr/;"4"+/ul/
	\ar"3"+/dr/;"5"+/ul/
	\ar"4"+/ur/;"5"+/dl/
	\ar"5"+/dr/;"6"+/ul/
	\ar@(r,u)"3"+/r/;"6"+/u/
	
	\ar"4"+/l/;"1"+/r/
	\ar"5"+/l/;"2"+/r/
	\ar"6"+/l/;"4"+/r/
\end{xy}
$$

(b) Let $Q$ be the same quiver in (a), and $w^{\prime}=s_{u_{1}}s_{u_{2}}s_{u_{3}}s_{u_{4}}s_{u_{5}}s_{u_{6}}=s_{1}s_{2}s_{3}s_{2}s_{1}s_{2}$. This is another reduced expression of $w$ in (a).
Then we have the quiver $Q_{w^{\prime}}$ as follows:
$$
\begin{xy}
	(0,0)+<0cm,0.2cm>="O",
	"O"+<0cm,1cm>="3"*{3},
	"O"+<-1cm,0cm>="2"*{2},
	"O"+<-2cm,-1cm>="1"*{1},
	"2"+<2cm,0cm>="4"*{4},
	"1"+<4cm,0cm>="5"*{5},
	"4"+<2cm,0cm>="6"*{6},
	
	\ar"2"+/ur/;"3"+/dl/
	\ar@(u,l)"1"+/u/;"3"+/l/
	\ar"1"+/ur/;"4"+/dl/
	\ar@/^0.4cm/"3"+/dr/;"5"+/u/
	\ar"4"+/dr/;"5"+/ul/
	\ar"5"+/ur/;"6"+/dl/
	\ar"3"+/r/;"6"+/u/
	
	\ar"4"+/l/;"2"+/r/
	\ar"6"+/l/;"4"+/r/
	\ar"5"+/l/;"1"+/r/
\end{xy}
$$
\end{example}
It is shown that $Q_{w}$ gives a quiver of $\End_{\Pi_{w}}(M)$ as we see in Theorem \ref{quiver}.
We define a morphism of  algebras $\phi : kQ_{w}\to \End_{\Pi_{w}}(M)$ by
\begin{itemize}
	\item[(a1)] For an arrow $\beta : j \to i$ going to the left, $\phi(\beta)$ is the canonical surjection $M^{j} \to M^{i}$.
	\item[(a2)] For a $Q$-arrow $\alpha_{i}: i \to j$ of the arrow $\alpha\in Q_{1}$, $\phi(\alpha_{i})$ is a morphism of $\Pi_{w}$-modules from $M^{i}$ to $M^{j}$ given by multiplying $\alpha$ from the right.
	\item[(a3)] For a $Q^{\ast}$-arrow $\alpha_{i}^{\ast}: i \to j$ of the arrow $\alpha\in Q_{1}$, $\phi(\alpha_{i}^{\ast})$ is a morphism of $\Pi_{w}$-modules from $M^{i}$ to $M^{j}$ given by multiplying $\alpha^{\ast}$ from the right.
	\end{itemize}
In Theorem \ref{quiver}, we do not consider gradings of $\Pi_{w}$ and $M^{i}$.
\begin{theorem} \cite[Theorem I\hspace{-.1em}I\hspace{-.1em}I. 4.1]{BIRSc} \label{quiver}
Let $w=s_{u_1}s_{u_2}\cdots s_{u_l}$ be a reduced expression.
Then the morphism of  algebras $\phi : kQ_{w}\to \End_{\Pi_{w}}(M)$ induces an isomorphism of algebras \[ \underline{\phi} : kQ_{w}/I \simeq \End_{\Pi_{w}}(M) \] for an ideal $I$ of $kQ$.
\end{theorem}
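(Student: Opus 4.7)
The plan is to deduce the theorem in two steps: first, verify that the formulas in (a1)--(a3) genuinely extend to a well-defined algebra morphism $\phi : kQ_w \to \End_{\Pi_w}(M)$; second, prove that $\phi$ is surjective, so that taking $I := \ker\phi$ yields the asserted isomorphism $kQ_w/I \simeq \End_{\Pi_w}(M)$.

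For well-definedness on generators, each of the three arrow types requires a separate check. A going-left arrow $\beta : j \to i$ has $u_i = u_j$ and $i<j$, hence $I_{u_1\cdots u_j} \subseteq I_{u_1\cdots u_i}$, so the canonical projection $\Pi e_{u_i}/I_{u_1\cdots u_j}e_{u_i} \twoheadrightarrow \Pi e_{u_i}/I_{u_1\cdots u_i}e_{u_i}$ furnishes a $\Pi_w$-linear map $M^j \to M^i$. For a $Q$-arrow $\alpha_i : i \to j$ coming from $\alpha : u_i \to u_j$ in $Q_1$, the formula $\phi(\alpha_i)(\overline{x}) := \overline{x\alpha}$ is well defined precisely when $I_{u_1\cdots u_i}\alpha \subseteq I_{u_1\cdots u_j}$; the hypothesis that $j$ is the largest vertex of type $u_j$ before the next vertex of type $u_i$ is exactly what makes this containment hold, given the definition $I_{u_k}=\Pi(1-e_{u_k})\Pi$. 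The $Q^{\ast}$-arrow case is parallel, with $\alpha^{\ast}$ in place of $\alpha$.

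For surjectivity, use that $M^i$ is cyclic with generator $\overline{e_{u_i}}$, so any $f : M^i \to M^j$ is determined by $y := f(\overline{e_{u_i}}) \in e_{u_i} M^j$ satisfying $I_{u_1\cdots u_i}\cdot y = 0$. Since $e_{u_i} M^j = e_{u_i}(\Pi/I_{u_1\cdots u_j})e_{u_j}$ is spanned by residue classes of paths in $\overline{Q}$ from $u_j$ to $u_i$, the task reduces to showing that every such residue class annihilated by $I_{u_1\cdots u_i}$ lies in the image of $\phi$. I would proceed by induction on the length $\ell$ of $w$: writing $w' = s_{u_1}\cdots s_{u_{\ell-1}}$, the quiver $Q_{w'}$ embeds into $Q_w$ and the inductive hypothesis handles morphisms among the $M^i$ with $i<\ell$; the remaining arrows, namely the (a1)--(a3) arrows incident to the new vertex $\ell$, must then be shown to span all morphisms involving $M^{\ell}$.

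The main obstacle is the combinatorial matching of paths in $\overline{Q}$ with paths in $Q_w$: many parallel paths in $\overline{Q}$ collapse to the same class in $\Pi/I_{u_1\cdots u_j}$, and the rules in (a2) and (a3) that force the target vertex to be the largest of its type before the next reset are precisely what select a canonical representative of each class. One way to make this precise is to use the Loewy filtration of $\Pi e_u$, whose layers are governed by the Weyl group action, to write down an explicit basis of each $\Hom_{\Pi_w}(M^i, M^j)$ and match these basis vectors with residue classes of paths in $Q_w$. Alternatively, one can combine a lower bound on $\dim \End_{\Pi_w}(M)$ obtained from the cluster tilting property (Proposition \ref{birs}(e)) with an upper bound from counting paths in $Q_w$, and show the two agree, forcing $\phi$ to be surjective and identifying $I$ with the resulting relation ideal.
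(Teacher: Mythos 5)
This statement is not proved in the paper at all: it is quoted verbatim from \cite[Theorem III.4.1]{BIRSc}, so there is no internal argument to compare against, and any proof you offer has to stand on its own. Measured by that standard, your proposal has a genuine gap. The well-definedness part is plausible but already glossed: for a $Q$-arrow $\alpha_i\colon i\to j$ you need the containment $I_{u_1\cdots u_i}e_{u_i}\alpha\subseteq I_{u_1\cdots u_j}e_{u_j}$, and you simply assert that the condition ``$j$ is the biggest vertex of type $u_j$ before the next vertex of type $u_i$'' makes it hold, with no argument; this needs the structural lemmas on the ideals $I_w$ (of the kind Proposition \ref{birslem1.14} encodes), not just the definition $I_u=\Pi(1-e_u)\Pi$.

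The more serious problem is surjectivity, which is exactly the content of the cited theorem, and your proposal never actually carries it out. The induction on the length of $w$ is announced but the inductive step (``the remaining arrows incident to the new vertex $\ell$ must then be shown to span all morphisms involving $M^{\ell}$'') is precisely the hard point and is left unproved; likewise the ``main obstacle'' paragraph only lists two possible strategies (an explicit basis of $\Hom_{\Pi_w}(M^i,M^j)$ via Loewy layers, or a dimension comparison) without executing either. The dimension-count alternative is moreover not obviously viable as stated: counting paths in $Q_w$ gives no finite upper bound on $\dim\End_{\Pi_w}(M)$ until the relation ideal $I$ is known, and identifying $I$ is part of what has to be proved. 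In \cite{BIRSc} the surjectivity (equivalently, that $Q_w$ is the Gabriel quiver of $\End_{\Pi_w}(M)$) is obtained from the $2$-Calabi--Yau cluster-tilting structure of $\underline{\Sub}\,\Pi_w$ — using Proposition \ref{birs}, exchange sequences, and the computation of the relevant $\Ext^1$- and radical spaces — rather than by elementary matching of paths in $\overline{Q}$ with paths in $Q_w$. If you want a self-contained argument, you should either import that machinery explicitly or genuinely construct the basis of each $\Hom_{\Pi_w}(M^i,M^j)$ you allude to; as written, the proposal is a plan rather than a proof.
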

Since $\End_{\Pi_{w}}(M)=\bigoplus_{n\in\mathbb{Z}}\Hom_{\Pi_{w}}^{\mathbb{Z}}(M,M(n))$,
we regard $\End_{\Pi_{w}}(M)$ as a graded algebra by $\End_{\Pi_{w}}(M)_{n}=\Hom_{\Pi_{w}}^{\mathbb{Z}}(M,M(n))$.
In particular, we have $\End_{\Pi_{w}}^{\mathbb{Z}}(M)=\End_{\Pi_{w}}(M)_{0}$.
We introduce a grading on $Q_{w}$.
\begin{definition} \label{gradeofquiver}
Assume that $w=s_{u_1}s_{u_2}\cdots s_{u_l}$ is a reduced expression.
Let $Q_{w}$ be the quiver of $\End_{\Pi_w}(M)$ and $(Q_{w})_{0}=\{1, \ldots, l\}$.
We define a grading on $Q_{w}$ as follows:
		\begin{itemize}
		\item[(1)]
		All arrows going to the left are of degree one.
		\item[(2)]
		Let $\beta : i \to j$ be a $Q$-arrow in $Q_{w}$. Then the degree of $\beta$ is $m_{i}-m_{j}$. 
		\item[(3)]
		Let $\beta : i \to j$ be a $Q^{\ast}$-arrow in $Q_{w}$. Then the degree of $\beta$ is $m_{i}-m_{j}+1$. 
		\end{itemize}
\end{definition}
\begin{example}
(a) In the quiver of Example \ref{exquiver} (a), we have the grading of $Q_{w}$ as follows:
$$
\begin{xy}
	(0,0)+<0cm,0.2cm>="O",
	"O"+<0cm,1cm>="3"*{3},
	"O"+<-1cm,0cm>="2"*{2},
	"O"+<-2cm,-1cm>="1"*{1},
	"1"+<3cm,0cm>="4"*{4},
	"2"+<3cm,0cm>="5"*{5},
	"4"+<2cm,0cm>="6"*{6},
	
	\ar"1"+/ur/;"2"+/dl/
	\ar"2"+/ur/;"3"+/dl/
	\ar@(u,l)"1"+/u/;"3"+/l/
	\ar"2"+/dr/;"4"+/ul/
	\ar"3"+/dr/;"5"+/ul/
	\ar"4"+/ur/;"5"+/dl/
	\ar"5"+/dr/;"6"+/ul/
	\ar@(r,u)^{-1}"3"+/r/;"6"+/u/
	
	\ar^{1}"4"+/l/;"1"+/r/
	\ar^{1}"5"+/l/;"2"+/r/
	\ar^{1}"6"+/l/;"4"+/r/
\end{xy}
$$
where non numbered arrows have degree zero.

(b) In the quiver of Example \ref{exquiver} (b), we have the grading of $Q_{w}$ as follows:
$$
\begin{xy}
	(0,0)+<0cm,0.2cm>="O",
	"O"+<0cm,1cm>="3"*{3},
	"O"+<-1cm,0cm>="2"*{2},
	"O"+<-2cm,-1cm>="1"*{1},
	"2"+<2cm,0cm>="4"*{4},
	"1"+<4cm,0cm>="5"*{5},
	"4"+<2cm,0cm>="6"*{6},
	
	\ar"2"+/ur/;"3"+/dl/
	\ar@(u,l)"1"+/u/;"3"+/l/
	\ar_(0.7){-1}"1"+/ur/;"4"+/dl/
	\ar@/^0.4cm/"3"+/dr/;"5"+/u/
	\ar_(0.3){1}"4"+/dr/;"5"+/ul/
	\ar_{-1}"5"+/ur/;"6"+/dl/
	\ar^{-1}"3"+/r/;"6"+/u/
	
	\ar_{1}"4"+/l/;"2"+/r/
	\ar_(0.3){1}"6"+/l/;"4"+/r/
	\ar^{1}"5"+/l/;"1"+/r/
\end{xy}
$$
where non numbered arrows have degree zero.
\end{example}
We regard $kQ_{w}$ as a graded algebra by the grading of Definition \ref{gradeofquiver}.
Then the isomorphism in Theorem \ref{quiver} holds as graded algebras.
\begin{proposition}\label{gradedhom}
The morphism of algebras \[\phi : kQ_{w} \to \End_{\Pi_{w}}(M) \] is a surjective morphism of graded algebras.
\end{proposition}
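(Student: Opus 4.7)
The plan is to prove Proposition \ref{gradedhom} in two pieces: first, surjectivity will be inherited for free from Theorem \ref{quiver}, and second, the substantive content is that $\phi$ respects the grading on $kQ_w$ defined in Definition \ref{gradeofquiver}. Since Theorem \ref{quiver} gives $\underline{\phi}\colon kQ_w/I \xrightarrow{\sim} \End_{\Pi_w}(M)$ as ungraded algebras, the only thing left to verify is that $\phi(\beta)$, viewed as an element of $\Hom_{\Pi_w}^{\mathbb{Z}}(M^i, M^j(d))$, lives in degree $d$ equal to the degree of $\beta$ in $Q_w$, for each of the three types of arrows in Definition \ref{gradeofquiver}.

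The key numerical input is the formula
\[
(M^i)_k = \bigl((\Pi/I_{u_1\cdots u_i})e_{u_i}\bigr)_{k+m_i},
\]
so that a $\Pi$-linear map $M^i \to M^j$ which is homogeneous of degree $e$ with respect to the ambient grading of $\Pi/I_{u_1\cdots u_j}$ gives rise to a graded morphism $M^i \to M^j(d)$ of degree zero precisely when $d = e + m_i - m_j$. I will then check the three cases.

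For a left-going arrow $\beta\colon j\to i$ (so $u_i=u_j$ with $i<j$ consecutive of that type), $\phi(\beta)$ is the canonical surjection $(\Pi/I_{u_1\cdots u_j})e_{u_j}\to(\Pi/I_{u_1\cdots u_i})e_{u_i}$, which is degree $0$ in the underlying $\Pi$-grading; thus $d = m_j - m_i = 1$, matching Definition \ref{gradeofquiver}(1). For a $Q$-arrow $\alpha_i\colon i\to j$ with $\alpha\in Q_1$ of underlying $\Pi$-degree $0$, right multiplication by $\alpha$ has $e=0$, so $d = m_i - m_j$, matching Definition \ref{gradeofquiver}(2). For a $Q^{\ast}$-arrow $\alpha_i^{\ast}\colon i\to j$, right multiplication by $\alpha^{\ast}$ has underlying degree $e=1$ in $\Pi$, hence $d = m_i - m_j + 1$, matching Definition \ref{gradeofquiver}(3). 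This verifies $\phi$ is a morphism of graded algebras, and combined with surjectivity from Theorem \ref{quiver} it completes the proof.

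There is essentially no obstacle here: the argument is a direct degree count. The only mildly delicate point is to be careful with the convention that the twist $(m_i)$ sits outside the quotient, so that the homogeneous shift contributed by $m_i$ enters with the correct sign in each of the three cases above.
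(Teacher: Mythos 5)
Your proposal is correct and follows essentially the same route as the paper: surjectivity is quoted from Theorem \ref{quiver}, and grading-preservation is checked arrow by arrow via the shift bookkeeping $d=e+m_i-m_j$, recovering degree $1$ for left-going arrows (using $m_j=m_i+1$), $m_i-m_j$ for $Q$-arrows, and $m_i-m_j+1$ for $Q^{\ast}$-arrows. The only cosmetic difference is that the paper justifies the left-going case by noting the tops of $M^j$ and $M^i$ sit in degrees $-m_j$ and $-m_i$, whereas you apply the same uniform shift formula to all three cases.
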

\begin{proof}
It is enough to show that the morphism $\phi : kQ_{w} \to \End_{\Pi_{w}}(M)$ preserves gradings.
Since $kQ_{w}$ is generated by arrows, it is enough to show that $\phi$ preserves gradings of arrows.

(a1)
Let $\beta : j \to i$ be an arrow going to the left.
Then $\phi(\beta)$ is given by a surjection \[(\Pi/I_{u_1u_2\cdots u_j})e_{u_{j}}(m_{j}) \to (\Pi/I_{u_1u_2\cdots u_i})e_{u_{i}}(m_{i}).\]
Since there exists no vertex of type $u_{i}=u_{j}$ between $i$ and $j$, we have $m_{i}+1=m_{j}$.
Since $\top\left( M^{j} \right)$ is concentrated in $-m_{j}$ and $\top \left( M^{i} \right)$ is concentrated in $-m_{i}$, 
this surjection is degree one.

(a2)
Let $\beta=\alpha_{i} : i \to j$ be a $Q$-arrow in $Q_{w}$, where $\alpha\in Q_{1}$.
Then $\phi(\beta)$ is a morphism multiplying $\alpha$ from the right:
\[\phi(\beta)=(\cdot \alpha) : (\Pi/I_{u_1u_2\cdots u_i})e_{u_{i}}(m_{i}) \to (\Pi/I_{u_1u_2\cdots u_j})e_{u_{j}}(m_{j}).\]
This means $\phi(\beta)$ is degree $m_{i}-m_{j}$.

(a3)
Let $\beta=\alpha_{i}^{\ast} : i \to j$ be a $Q^{\ast}$-arrow in $Q_{w}$, where $\alpha\in Q_{1}$.
Then $\phi(\beta)$ is a morphism multiplying $\alpha^{\ast}$ from the right:
\[\phi(\beta)=(\cdot \alpha^{\ast}) : (\Pi/I_{u_1u_2\cdots u_i})e_{u_{i}}(m_{i}) \to (\Pi/I_{u_1u_2\cdots u_j})e_{u_{j}}(m_{j}).\]
This means $\phi(\beta)$ is degree $m_{i}-m_{j}+1$.
\end{proof}
The following lemma is important to show Propositions \ref{prop-gldim-fin} and \ref{induced}.
\begin{lemma}\label{arrownega}
Assume that $w=s_{u_1}s_{u_2}\cdots s_{u_l}$ is a $c$-sortable element.
Let $\beta : i \to j$ be an arrow in $Q_{w}$ which is a $Q$-arrow or a $Q^{\ast}$-arrow.
Then the following holds.
\begin{itemize}
\item[(a)]
If $\beta$ has a negative degree, then we have $i=p_{u_{i}}$ and $j=p_{u_{j}}$.

\item[(b)]
If $\beta$ satisfies $i\not=p_{u_{i}}$ or $j\not=p_{u_{j}}$, then $\beta$ has degree zero.
\end{itemize}
\end{lemma}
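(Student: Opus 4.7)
The plan is to deduce both (a) and (b) from a single case analysis, since (a) is simply the contrapositive of (b). First I set up combinatorial bookkeeping: the nesting $\supp(c^{(m)})\subset\cdots\subset\supp(c^{(0)})$ means that each vertex $u\in\supp(w)$ appears exactly once in each of $c^{(0)},c^{(1)},\ldots,c^{(m_{p_u})}$ and never afterwards, so position $i$ lies in the subword $c^{(m_i)}$, and within any fixed $c^{(k)}$ the letters appear in the same relative order as in $c$. I will also use the convention encoded in $e_{u_j}(kQ)e_{u_i}=0$ for $i<j$: in contrapositive form, whenever $\alpha\colon a\to b$ is an arrow of $Q$, the vertex $a$ precedes $b$ in $c$.

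The argument then splits according to whether $i=p_{u_i}$. Suppose first $i\neq p_{u_i}$, so the next occurrence $i^+$ of type $u_i$ exists and lies in $c^{(m_i+1)}$. The vertex $j$, being strictly between $i$ and $i^+$, lies either in $c^{(m_i)}$ at a position whose type follows $u_i$ in the $c$-order, or in $c^{(m_i+1)}$ at a position whose type precedes $u_i$. For a $Q$-arrow $\alpha_i$ associated with $\alpha\colon u_i\to u_j$, the vertex $u_i$ precedes $u_j$ in $c$, which rules out the second option and forces $m_j=m_i$; the degree $m_i-m_j$ is then $0$. For a $Q^*$-arrow $\alpha_i^*$ associated with $\alpha\colon u_j\to u_i$, the vertex $u_j$ precedes $u_i$, ruling out the first option and forcing $m_j=m_i+1$; the degree $m_i-m_j+1$ is again $0$.

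Suppose now $i=p_{u_i}$. Then $j$ is the largest vertex of type $u_j$ strictly after $i$. Using the nested supports together with the $c$-order of $u_i$ and $u_j$ (dictated by whether $\beta$ is a $Q$- or $Q^*$-arrow), one checks directly that whenever the arrow $\beta$ exists the last occurrence $p_{u_j}$ of $u_j$ in $w$ must lie after $p_{u_i}=i$, so $j=p_{u_j}$; here the degree can indeed be negative. Statement (b) follows at once: the hypothesis $i\neq p_{u_i}$ puts us in the first case and gives degree $0$, while $i=p_{u_i}$ forces $j=p_{u_j}$ so the other disjunct of the hypothesis is never met. Statement (a) is then the contrapositive of (b). The only subtle step --- and the main obstacle --- is the positional bookkeeping of $u_i$ and $u_j$ inside a single $c^{(k)}$, which ultimately hinges on the compatibility between the orientation of $Q$ and the chosen Coxeter element $c$.
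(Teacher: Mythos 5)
Your proof is correct. It uses the same two ingredients as the paper --- the nested supports of a $c$-sortable word and the admissibility condition $e_{u_j}(kQ)e_{u_i}=0$ for $i<j$ (arrows of $Q$ point forward in $c$) --- but organizes the case analysis differently. The paper fixes the arrow $\alpha\colon u\to v$ of $Q$, writes out the full interleaving pattern of all occurrences of $u$ and $v$ in $w$ (two global cases, according to which letter runs out first), draws the subquiver $Q_{w}(i,j,\alpha)$, and reads off every $Q$- and $Q^{\ast}$-arrow together with its degree. You instead argue locally at $i$: if $i\neq p_{u_i}$, then $j$ lies strictly between $i$ and the next occurrence of $u_i$, hence in $c^{(m_i)}$ or in $c^{(m_i+1)}$, and the $c$-order of $u_i$ and $u_j$ (determined by whether $\beta$ is a $Q$- or a $Q^{\ast}$-arrow) pins down which one, giving $m_j=m_i$, respectively $m_j=m_i+1$, and degree $0$; if $i=p_{u_i}$, then $j$ is the last occurrence of $u_j$. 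Two small remarks: your second case needs less than you invoke, since $\beta$ exists one has $p_{u_j}\geq j>i$, so the largest occurrence of $u_j$ after $i$ is $p_{u_j}$ outright, with no appeal to nested supports or the $c$-order; and (a) is not literally the contrapositive of (b), but it does follow from that contrapositive because a negative degree is in particular nonzero, so the deduction stands.
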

\begin{proof}
Assume that $\beta$ is a $Q$-arrow and $i$ is a vertex of type $u$ and $j$ is a vertex of type $v$.
Then, by the definition of $Q_{w}$, there exists an arrow $\alpha: u \to v$ in $Q$ which satisfies $\alpha_{i}=\beta$.
Pick up vertices of type $u$ and $v$ from $(Q_{w})_{0}=\{1,2,\ldots,l\}$, then we have the following two cases:
\begin{align} 1\leq a_{1}< b_{1} < a_{2}< b_{2} < \cdots < a_{s} < b_{s} < b_{s+1} < \cdots < b_{t}\leq l, \label{abb} \\ 
1\leq a_{1}< b_{1} < a_{2}< b_{2} < \cdots < b_{t} < a_{t+1} < a_{t+2} < \cdots < a_{s}\leq l,\label{abaa} \end{align}
where $a_{\bullet}$ are vertices of type $u$ and $b_{\bullet}$ are vertices of type $v$.
By the definition of $m_{j}$, we have $m_{a_{k}}=k-1$ and $m_{b_{k}}=k-1$.
Moreover, by the definition of $p_{u}$ and $p_{v}$, we have $p_{u}=a_{s}$ and $p_{v}=b_{t}$.
Let $Q_{w}(i,j,\alpha)$ be a subquiver of $Q_{w}$ such that $\left( Q_{w}(i,j,\alpha) \right)_{0}=\{ a_{1}, \cdots, a_{s}, b_{1}, \cdots, b_{t} \}$ and $\left( Q_{w}(i,j,\alpha) \right)_{1}$ is the set of all arrows of the form $\alpha_{k}$ or $\alpha_{k}^{\ast}$ for some $1\leq k\leq l$ or arrows going to the left.

In the case (\ref{abb}), the quiver $Q_{w}(i,j,\alpha)$ is the following:
$$\begin{xy}
	(0,0)+<0cm,0.2cm>="O",
	"O"+<0cm,-0.5cm>="a1"*{a_{1}},
	"O"+<1cm,0.5cm>="b1"*{b_{1}},
	"a1"+<2cm,0cm>="a2"*{a_{2}},
	"b1"+<2cm,0cm>="b2"*{b_{2}},
	"a2"+<4cm,0cm>="as"*{a_{s}},
	"b2"+<4cm,0cm>="bs"*{b_{s}},
	"bs"+<2cm,0cm>="bs+1"*{b_{s+1}},
	"bs+1"+<3cm,0cm>="bt"*{b_{t}},
	"b2"+<2cm,0cm>="b3"*{\cdots},
	"a2"+<2cm,0cm>="a3"*{\cdots},
	"bs+1"+<1.5cm,0cm>="bs+2"*{\cdots},
	
	\ar"a1"+/ur/;"b1"+/dl/
	\ar"b1"+/dr/;"a2"+/ul/
	\ar"a2"+/ur/;"b2"+/dl/
	\ar@(r,dl)"as"+/r/;"bt"+/d/
	
	\ar"a2"+/l/;"a1"+/r/
	\ar"b2"+/l/;"b1"+/r/
	\ar"b2"+/dr/;"a3"+/ul/
	\ar"b3"+/dr/;"as"+/ul/
	\ar"as"+/l/;"a3"+<0.5cm,0cm>
	\ar"a3"+<-0.5cm,0cm>;"a2"+/r/
	\ar"b3"+<-0.5cm,0cm>;"b2"+/r/
	\ar"bs+1"+<-0.5cm,0cm>;"bs"+/r/
	\ar"bt"+/l/;"bs+2"+<0.5cm,0cm>
	\ar"bs+2"+<-0.5cm,0cm>;"bs+1"+<0.5cm,0cm>
	\ar"bs"+<-0.5cm,0cm>;"b3"+<0.5cm,0cm>
\end{xy}.$$
Since $\beta$ is a $Q$-arrow, $\beta$ is one of the arrows of $a_{k}\to b_{k}$ for $1\leq k \leq s-1$ or $a_{s}\to b_{t}$.
For $1\leq k \leq s-1$, we have $m_{a_{k}}-m_{b_{k}}=0$.
Therefore, in the case (\ref{abb}), $({\rm a})$ and $({\rm b})$ hold.

In the case (\ref{abaa}), the quiver $Q_{w}(i,j,\alpha)$ is the following:
$$\begin{xy}
	(0,0)+<0cm,0.2cm>="O",
	"O"+<0cm,-0.5cm>="a1"*{a_{1}},
	"O"+<1cm,0.5cm>="b1"*{b_{1}},
	"a1"+<2cm,0cm>="a2"*{a_{2}},
	"b1"+<2cm,0cm>="b2"*{b_{2}},
	"a2"+<4cm,0cm>="at"*{a_{t}},
	"b2"+<4cm,0cm>="bt"*{b_{t}},
	"at"+<2cm,0cm>="at+1"*{a_{t+1}},
	"at+1"+<3cm,0cm>="as"*{a_{s}},
	"b2"+<2cm,0cm>="b3"*{\cdots},
	"a2"+<2cm,0cm>="a3"*{\cdots},
	"at+1"+<1.5cm,0cm>="at+2"*{\cdots},
	
	\ar"a1"+/ur/;"b1"+/dl/
	\ar"b1"+/dr/;"a2"+/ul/
	\ar"a2"+/ur/;"b2"+/dl/
	\ar"at"+/ur/;"bt"+/dl/
	\ar@(r,ul)"bt"+/r/;"as"+/u/
	
	\ar"a2"+/l/;"a1"+/r/
	\ar"b2"+/l/;"b1"+/r/
	\ar"b2"+/dr/;"a3"+/ul/
	\ar"b3"+/dr/;"at"+/ul/
	\ar"at"+/l/;"a3"+<0.5cm,0cm>
	\ar"a3"+<-0.5cm,0cm>;"a2"+/r/
	\ar"b3"+<-0.5cm,0cm>;"b2"+/r/
	\ar"at+1"+<-0.5cm,0cm>;"at"+/r/
	\ar"bt"+/l/;"b3"+<0.5cm,0cm>
	\ar"at+2"+<-0.5cm,0cm>;"at+1"+<0.5cm,0cm>
	\ar"as"+<-0.5cm,0cm>;"at+2"+<0.5cm,0cm>
\end{xy}.$$
Since $\beta$ is a $Q$-arrow, $\beta$ is one of the arrows of $a_{k}\to b_{k}$ for $1\leq k \leq t$.
For $1\leq k \leq t$, we have $m_{a_{k}}-m_{b_{k}}=0$.
Therefore, in the case (\ref{abaa}), $({\rm a})$ and $({\rm b})$ hold.

By the same argument, we can show in the case when $\beta$ is a $Q^{\ast}$-arrow.
\end{proof}
\begin{lemma}\label{facthr}
Assume that $w=s_{u_1}s_{u_2}\cdots s_{u_l}$ is a $c$-sortable element,
then any $f \in \Hom_{\Pi_{w}}^{\mathbb{Z}}(M,M(a))$ with $a<0$ factors through $\add P=\add (\bigoplus_{u \in \supp(w)}M^{p_{u}})$.
\end{lemma}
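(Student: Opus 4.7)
The plan is to reduce the statement to checking it for images of individual paths in $Q_w$, and then apply Lemma \ref{arrownega} to split such paths through summands of $P$. First, by Proposition \ref{gradedhom} the map $\phi: kQ_w \twoheadrightarrow \End_{\Pi_w}(M)$ is a surjective morphism of graded algebras, so any $f \in \Hom_{\Pi_w}^{\mathbb{Z}}(M, M(a))$ is a $k$-linear combination of images $\phi(p)$ where $p$ runs over paths of total degree $a$ in $Q_w$. Since the class of morphisms factoring through $\add P$ is closed under finite sums, it suffices to establish the conclusion for each single $\phi(p)$ with $\deg(p) = a < 0$.

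Next, writing such a path as $p = \alpha_1 \alpha_2 \cdots \alpha_k$ with $\alpha_s: i_{s-1} \to i_s$, I observe that every arrow going to the left has degree $+1$ by Definition \ref{gradeofquiver}. Hence the inequality $\sum_s \deg(\alpha_s) = a < 0$ forces at least one arrow $\alpha_{j_0}$ in $p$ to be a $Q$-arrow or a $Q^*$-arrow of strictly negative degree. Lemma \ref{arrownega}(a) then applies to $\alpha_{j_0}: i_{j_0-1} \to i_{j_0}$ and gives $i_{j_0-1} = p_{u_{i_{j_0-1}}}$ and $i_{j_0} = p_{u_{i_{j_0}}}$, so that $M^{i_{j_0}}$ is a summand of $P = \bigoplus_{u \in \supp(w)} M^{p_u}$.

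Finally, I would split the path at the vertex $i_{j_0}$ to obtain the factorization
\[
\phi(p) \;=\; \phi(\alpha_{j_0+1}\cdots\alpha_k)\circ \phi(\alpha_1\cdots\alpha_{j_0})\colon M^{i_0} \longrightarrow M^{i_{j_0}} \longrightarrow M^{i_k},
\]
which goes through $M^{i_{j_0}} \in \add P$; the edge cases $j_0 = k$ (empty second factor equal to $\operatorname{id}_{M^{i_k}}$) and $j_0 = 1$ (empty first factor) cause no difficulty since $M^{i_{j_0}}$ is in $\add P$ in either case. Taking the direct sum of the intermediate objects over all paths in the decomposition of $f$ produces a single object of $\add P$ through which $f$ factors.

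The entire combinatorial heart of the argument has been absorbed into Lemma \ref{arrownega}: once one knows that a negative-degree arrow must connect two rightmost-copy vertices, the rest is a routine path-splitting argument, so I expect no serious obstacle beyond carefully tracking the composition convention $fg = g \circ f$ when writing the factorization.
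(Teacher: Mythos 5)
Your proposal is correct and follows essentially the same route as the paper: identify $\End_{\Pi_w}(M)$ with the graded quotient of $kQ_w$ via Proposition \ref{gradedhom}, reduce to a single path of negative degree, note that such a path must contain a $Q$- or $Q^{\ast}$-arrow of negative degree (left-going arrows having degree $+1$), and invoke Lemma \ref{arrownega} to split the path at a vertex of the form $p_u$, hence through $\add P$. Your extra care with the linear-combination reduction and the composition convention only makes explicit what the paper leaves implicit.
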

\begin{proof}
We identify $\End_{\Pi_{w}}(M)$ with $kQ_{w}/I$ as graded algebras by Theorem \ref{quiver} and Proposition \ref{gradedhom}.
Since $f$ is written as a liner combination of paths in $Q_{w}$, 
we can assume that $f=p$ for some path $p$ in $Q_{w}$.
Since $f$ has a negative degree, the degree of $p$ is negative.
Thus $p$ contains an arrow of negative degree.
By Lemma \ref{arrownega}, $p$ factors through a vertex $p_{u}$ for some $u\in\supp(w)$.
Therefore, $f$ factors through $\add P=\add (\bigoplus_{u \in \supp(w)}M^{p_{u}})$.
\end{proof}
Now we are ready to show the finiteness of the global dimension of $\underline{\End}_{\Pi_{w}}^{\mathbb{Z}}(M)$.
\begin{proposition}\label{prop-gldim-fin}
Let $w=s_{u_1}s_{u_2}\cdots s_{u_l}=c^{(0)}c^{(1)}\cdots c^{(m)}$ be a $c$-sortable element and $M=\bigoplus_{i=1}^{l}M^{i}$ be a tilting object in $\underline{\Sub}^{\mathbb{Z}}\Pi_{w}$.
Then the global dimension of $\underline{\End}_{\Pi_{w}}^{\mathbb{Z}}(M)$ is finite.
\end{proposition}
\begin{proof}
We denote by $\underline{Q}_{w}$ the full subquiver of $Q_{w}$ such that $(\underline{Q}_{w})_{0}=(Q_{w})_{0}\setminus\{p_{u}\mid u\in\supp(w)\}$.
Then $\phi$ induces a surjective morphism of graded algebras $\widetilde{\phi} : k\underline{Q}_{w}\to \underline{\End}_{\Pi_{w}}(M)$ by \cite[Theorem 6.6]{BIRSm}.
By Lemma \ref{arrownega} (a), $k\underline{Q}_{w}$ is positively graded and therefore $\underline{\End}_{\Pi_{w}}(M)$ is also positively graded.
By taking degree zero part of these algebras, we have the following commutative diagram
$$
\xymatrix{
k\underline{Q}_{w}\ar[r]^{\widetilde{\phi}}\ar[d] & \underline{\End}_{\Pi_{w}}(M)\ar[d]\\
k\underline{Q}_{w, 0}\ar[r]^{\overline{\phi}} & \underline{\End}_{\Pi_{w}}^{\mathbb{Z}}(M),
}
$$
where we denote by $\underline{Q}_{w, 0}$ a subquiver of $\underline{Q}_{w}$ such that vertices are same as $\underline{Q}_{w}$ and arrows are all degree zero arrows of $\underline{Q}_{w}$.
We have a surjection $\overline{\phi}$, since $\widetilde{\phi}$ and vertical morphisms are surjections.
Because $\underline{Q}_{w, 0}$ does not contains arrows going to the left, $\underline{Q}_{w, 0}$ is acyclic.
Therefore the global dimension of $\underline{\End}_{\Pi_{w}}^{\mathbb{Z}}(M)$ is finite.
\end{proof}
In Section \ref{gldimofend}, we show that the global dimension of $\underline{\End}_{\Pi_{w}}^{\mathbb{Z}}(M)$ is at most two.
We show that the morphism $F$ actually induces a morphism $\underline{F}$.
\begin{proposition}\label{induced}
The morphism $F$ induces a morphism of algebras:
\[ \underline{F} : \underline{\End}_{\Pi_w}^{\mathbb{Z}}(M) \to \End_{kQ}(M_0)/[T] .\]
\end{proposition}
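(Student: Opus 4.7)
The plan is to verify that for any $f\in\End_{\Pi_w}^{\mathbb Z}(M)$ factoring through a graded projective $\Pi_w$-module $Q$, the image $F(f)=f|_{M_0}$ lies in $[T]\subset\End_{kQ}(M_0)$; the existence of $\underline F$ will then follow from the universal property of the quotient, and multiplicativity/additivity will be inherited from $F$. I would begin by writing $f=h\circ g\colon M\to Q\to M$ and decomposing $Q=\bigoplus_k \Pi_w e_{u_k}(j_k)$ into indecomposable graded projectives, so that $F(f)$ factors through $Q_0=\bigoplus_k(\Pi_w e_{u_k})_{j_k}$ as a morphism in $\mod kQ$.

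The heart of the argument is to reduce to the case where each indecomposable summand of $Q$ lies in $\add P$, where $P=\bigoplus_{u\in\supp(w)}M^{p_u}$ as in (\ref{m}); equivalently, to show that the contribution of any summand $\Pi_w e_u(j)$ with $j\neq m_{p_u}$ can be absorbed into a factorization through $M^{p_u}=\Pi_w e_u(m_{p_u})$. For this I would use the identification $\End_{\Pi_w}(M)\simeq kQ_w/I$ as graded algebras (Theorem \ref{quiver} and Proposition \ref{gradedhom}) and argue in the spirit of Lemma \ref{facthr}: writing $h\circ g$ as a linear combination of degree-zero paths in $Q_w$, the constraint that the whole composite is degree $0$ together with the vanishing results for $(\Pi_w e_u)_j$ in Lemma \ref{c} and the graded structure from Proposition \ref{pii} force any genuine contribution through $\Pi_w e_u(j)$ with $j<m_{p_u}$ to be rewritable as a path passing through the vertex $p_u$, by combining it with suitable arrows of nonpositive degree as classified by Lemma \ref{arrownega}.

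Once such a replacement $f=h'\circ g'$ with $g'\colon M\to P'$, $h'\colon P'\to M$ and $P'\in\add P$ is secured, taking degree zero gives $F(f)=h'|_{P'_0}\circ g'|_{M_0}$, and since $P_0=T$ by definition (see (\ref{m})), $F(f)$ visibly factors through an object of $\add T$, hence lies in $[T]$. This yields the induced ring homomorphism $\underline F$ making the diagram in Theorem \ref{endalg} commute.

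The principal technical obstacle is the reduction to $\add P$ above. The degree-zero case is genuinely more delicate than the negative-degree situation of Lemma \ref{facthr}: one must separate the cases $j=m_{p_u}$, $j>m_{p_u}$ (where $(\Pi_w e_u)_j=0$, so no contribution), and $j<m_{p_u}$, and in the last case exploit that the only way for both $g$ and $h$ to be nonzero in degree zero at the summand $\Pi_w e_u(j)$ is through a compatibility with a $p$-vertex in $Q_w$. This is precisely where the $c$-sortability of $w$ enters, via the containment $\supp(c^{(i+1)})\subset\supp(c^{(i)})$ which was used to analyze the graded structure of $\Pi_w$ in Section \ref{gradedstrof}.
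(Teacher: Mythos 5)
Your overall reduction is the same as the paper's: it suffices to show that whenever $f\in\End_{\Pi_w}^{\mathbb{Z}}(M)$ factors through an indecomposable graded projective $\Pi_we_u(j)$, the restriction $F(f)=f|_{M_0}$ lies in $[T]$, and your cases $j>m_{p_u}$ (where $(\Pi_we_u)_j=0$, so the contribution vanishes) and $j=m_{p_u}$ (where $f$ factors through $M^{p_u}\in\add P$, hence $F(f)$ factors through $P_0=T$) are precisely the paper's first two cases.

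The gap is in the case $j<m_{p_u}$, which you single out as the principal obstacle and propose to settle by writing the composite $h\circ g$ as a linear combination of degree-zero paths in $Q_w$ and ``rerouting'' them through the vertex $p_u$. As described, this step does not go through: a degree-zero path need not contain any arrow of negative degree, so Lemma \ref{arrownega} gives no leverage on it, and the expression of the composite in $kQ_w/I$ retains no memory of the factorization through $\Pi_we_u(j)$ --- indeed a general degree-zero endomorphism of $M$ (the identity, say) does not factor through $\add P$, so no degree bookkeeping on the composite alone can force a passage through $p_u$. The conclusion you want in this case is true, but the justification you sketch is not the right mechanism. The missing idea, which is the paper's one-line argument, is to apply Lemma \ref{facthr} to the factor $g$ rather than to $f$: writing $\Pi_we_u(j)=M^{p_u}(a)$ with $a=j-m_{p_u}<0$, the morphism $g$, composed with the split inclusion $M^{p_u}(a)\hookrightarrow M(a)$, is an element of $\Hom_{\Pi_w}^{\mathbb{Z}}(M,M(a))$ with $a<0$, hence factors through $\add P$ by Lemma \ref{facthr}; consequently $f=h\circ g$ factors through $\add P$ and $F(f)$ through $P_0=T$. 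In particular no new path analysis is needed, and $c$-sortability enters only through Lemma \ref{facthr} (via Lemma \ref{arrownega}), not through a separate use of $\supp(c^{(i+1)})\subset\supp(c^{(i)})$ at this point.
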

\begin{proof}
We show that if a morphism $f:M \to M$ in $\mod^{\mathbb{Z}}\Pi_{w}$ factors through graded projective $\Pi_{w}$-modules, 
then $f$ factors through $\add P=\add (\bigoplus_{u \in \supp(w)}M^{p_{u}})$.
Without loss of generality, we may assume that $f=h\circ g$ for $g:M \to M^{p_{u}}(a)$ and $h:M^{p_{u}}(a) \to M$, where $u\in \supp(w)$ and $a \in \mathbb{Z}$.
We divide into three cases:
	\begin{itemize}
		\item If $a>0$, then $M^{p_{u}}(a)_{0}=M^{p_{u}}_{a}=0$, since $M^{p_{u}}=M^{p_{u}}_{\leq 0}$ by Proposition \ref{pii}. Thus we have $f|_{0}=0$.
		\item If $a=0$, then $f$ actually factors through $M^{p_{u}} \in \add P$.
		\item If $a<0$, then $g$ factors through $\add P$ by Lemma \ref{facthr}. Thus $f$ also factors through $\add P$.
		\end{itemize}
\vspace{-0.5cm}\end{proof}
In the rest of this subsection, we give some examples of tilting objects $M$ and its endomorphism algebras.
\begin{example}
If $Q$ is not Dynkin, then $w=c^2=s_{u_1}s_{u_2}\cdots s_{u_n}s_{u_1}s_{u_2}\cdots s_{u_n}$ is a reduced expression by \cite[Proposition I\hspace{-.1em}I\hspace{-.1em}I. 3.1]{BIRSc}.
Thus we have $\Pi_w=\Pi_{\leq 1}$ by Proposition \ref{pii}.
Since $M=\Pi_{c}\oplus\Pi_{c^{2}}(1)\simeq kQ$ in $\underline{\Sub}^{\mathbb{Z}}\Pi_w$ and $kQ$ is concentrated in degree $0$, we have
\[
\underline{\End}_{\Pi_w}^{\mathbb{Z}}(M)=\underline{\End}_{\Pi_w}(kQ).
\]
By \cite[Proposition I\hspace{-.1em}I\hspace{-.1em}I. 3.2]{BIRSc}, we have an isomorphism $\underline{\End}_{\Pi_w}(kQ)\simeq kQ$.
Therefore, we have $\underline{\End}_{\Pi_w}^{\mathbb{Z}}(M)\simeq kQ$, and a triangulated equivalence
\[
 \underline{\Sub}^{\mathbb{Z}}\Pi_w \simeq {\mathsf K}^{{\rm b}}(\proj kQ) \simeq {\mathsf D}^{{\rm b}}(kQ).
 \]
\end{example}
\begin{example}
Let $Q$ be a quiver \xymatrix{1 \ar[r]<1.5pt> \ar[r]<-1.5pt> & 2}.
Then we have a graded algebra $\Pi=\Pi e_1\oplus\Pi e_2$, and these are represented by their radical filtrations as follows:
\[
	\begin{xy} (0,0)="00",
	"00"+<0cm,1.6cm>="11"*{{\bf 1}},
	"11"+<-0.3cm,-0.5cm>="21"*{2},
	"21"+<-0.3cm,-0.5cm>="31"*{1},
	"31"+<-0.3cm,-0.5cm>="41"*{2},
	"41"+<-0.3cm,-0.5cm>="51"*{1},
	"51"+<-0.3cm,-0.5cm>="61"*{2},
	"61"+<-0.3cm,-0.5cm>="71",
	"21"+/r0.6cm/="22"*{2},
	"31"+/r0.6cm/="32"*{1},
	"32"+/r0.6cm/="33"*{1},
	"41"+/r0.6cm/="42"*{2},
	"42"+/r0.6cm/="43"*{2},
	"43"+/r0.6cm/="44"*{2},
	"51"+/r0.6cm/="52"*{1},
	"52"+/r0.6cm/="53"*{1},
	"53"+/r0.6cm/="54"*{1},
	"54"+/r0.6cm/="55"*{1},
	"61"+/r0.6cm/="62"*{2},
	"62"+/r0.6cm/="63"*{2},
	"63"+/r0.6cm/="64"*{2},
	"64"+/r0.6cm/="65"*{2},
	"65"+/r0.6cm/="66"*{2},
	"71"+/r0.6cm/="72",
	"72"+/r0.6cm/="73",
	"73"+/r0.6cm/="74",
	"74"+/r0.6cm/="75",
	"75"+/r0.6cm/="76",
	"76"+/r0.6cm/="77",
	
	\ar@{-}"21"+/dl/;"31"+/u/<2pt>
	\ar@{-}"21"+/dr/;"32"+/u/<-2pt>
	\ar@{-}"22"+/dl/;"32"+/u/<2pt>
	\ar@{-}"22"+/dr/;"33"+/u/<-2pt>
	\ar@{-}"41"+/dl/;"51"+/u/<2pt>
	\ar@{-}"41"+/dr/;"52"+/u/<-2pt>
	\ar@{-}"42"+/dl/;"52"+/u/<2pt>
	\ar@{-}"42"+/dr/;"53"+/u/<-2pt>
	\ar@{-}"43"+/dl/;"53"+/u/<2pt>
	\ar@{-}"43"+/dr/;"54"+/u/<-2pt>
	\ar@{-}"44"+/dl/;"54"+/u/<2pt>
	\ar@{-}"44"+/dr/;"55"+/u/<-2pt> 
	\ar@{-}"61"+/dl/;"71"+/u/<2pt>
	\ar@{-}"61"+/dr/;"72"+/u/<-2pt>
	\ar@{-}"62"+/dl/;"72"+/u/<2pt>
	\ar@{-}"62"+/dr/;"73"+/u/<-2pt>
	\ar@{-}"63"+/dl/;"73"+/u/<2pt>
	\ar@{-}"63"+/dr/;"74"+/u/<-2pt>
	\ar@{-}"64"+/dl/;"74"+/u/<2pt>
	\ar@{-}"64"+/dr/;"75"+/u/<-2pt>
	\ar@{-}"65"+/dl/;"75"+/u/<2pt>
	\ar@{-}"65"+/dr/;"76"+/u/<-2pt>
	\ar@{-}"66"+/dl/;"76"+/u/<2pt>
	\ar@{-}"66"+/dr/;"77"+/u/<-2pt>
	\end{xy}
\qquad
	\begin{xy} (0,0)="00",
	"00"+<0cm,1.6cm>="11"*{{\bf 2}},
	"11"+<-0.3cm,-0.5cm>="21"*{{\bf 1}},
	"21"+<-0.3cm,-0.5cm>="31"*{2},
	"31"+<-0.3cm,-0.5cm>="41"*{1},
	"41"+<-0.3cm,-0.5cm>="51"*{2},
	"51"+<-0.3cm,-0.5cm>="61"*{1},
	"61"+<-0.3cm,-0.5cm>="71",
	"21"+/r0.6cm/="22"*{{\bf 1}},
	"31"+/r0.6cm/="32"*{2},
	"32"+/r0.6cm/="33"*{2},
	"41"+/r0.6cm/="42"*{1},
	"42"+/r0.6cm/="43"*{1},
	"43"+/r0.6cm/="44"*{1},
	"51"+/r0.6cm/="52"*{2},
	"52"+/r0.6cm/="53"*{2},
	"53"+/r0.6cm/="54"*{2},
	"54"+/r0.6cm/="55"*{2},
	"61"+/r0.6cm/="62"*{1},
	"62"+/r0.6cm/="63"*{1},
	"63"+/r0.6cm/="64"*{1},
	"64"+/r0.6cm/="65"*{1},
	"65"+/r0.6cm/="66"*{1},
	"71"+/r0.6cm/="72",
	"72"+/r0.6cm/="73",
	"73"+/r0.6cm/="74",
	"74"+/r0.6cm/="75",
	"75"+/r0.6cm/="76",
	"76"+/r0.6cm/="77",
	
	\ar@{-}"11"+/dl/;"21"+/u/<2pt>
	\ar@{-}"11"+/dr/;"22"+/u/<-2pt>
	\ar@{-}"31"+/dl/;"41"+/u/<2pt>
	\ar@{-}"31"+/dr/;"42"+/u/<-2pt>
	\ar@{-}"32"+/dl/;"42"+/u/<2pt>
	\ar@{-}"32"+/dr/;"43"+/u/<-2pt>
	\ar@{-}"33"+/dl/;"43"+/u/<2pt>
	\ar@{-}"33"+/dr/;"44"+/u/<-2pt>
	\ar@{-}"51"+/dl/;"61"+/u/<2pt>
	\ar@{-}"51"+/dr/;"62"+/u/<-2pt>
	\ar@{-}"52"+/dl/;"62"+/u/<2pt>
	\ar@{-}"52"+/dr/;"63"+/u/<-2pt>
	\ar@{-}"53"+/dl/;"63"+/u/<2pt>
	\ar@{-}"53"+/dr/;"64"+/u/<-2pt>
	\ar@{-}"54"+/dl/;"64"+/u/<2pt>
	\ar@{-}"54"+/dr/;"65"+/u/<-2pt>
	\ar@{-}"55"+/dl/;"65"+/u/<2pt>
	\ar@{-}"55"+/dr/;"66"+/u/<-2pt>
	\end{xy},
\]
where the degree zero parts are denoted by bold numbers.
Let $c=s_{1}s_{2}$.
This is a Coxeter element.
Let $w=c^{n+1}=s_{1}s_{2}s_{1}\cdots s_{1}s_{2}$.
This is a $c$-sortable element.
We have $(\Pi/I_{c^{i}})e_{1}=(\Pi/J^{2i-1})e_{1}$, and $(\Pi/I_{c^{i}})e_{2}=(\Pi/J^{2i})e_{2}$, where $J$ is the Jacobson radical of $\Pi$.
By Theorem \ref{thm}, $M=\bigoplus_{i=1}^{n}(\Pi/I_{c^{i}})(i-1)$ is a tilting object in $\underline{\Sub}^{\mathbb{Z}}\Pi_w$, where graded projective $\Pi_{w}$-modules are removed.
The endomorphism algebra $\underline{\End}_{\Pi_w}^{\mathbb{Z}}(M)\simeq\End_{kQ}(M_{0})/[T]$ is given by the following quiver with relations
\[\Delta = 
\left[\xymatrix{1 \ar^{a}[r]<1.5pt> \ar_{b}[r]<-1.5pt> & 2 \ar^{a}[r]<1.5pt> \ar_{b}[r]<-1.5pt> & 3 \ar^(0.3){a}[r]<1.5pt> \ar_(0.3){b}[r]<-1.5pt> & \quad \cdots \quad \ar^(0.6){a}[r]<1.5pt> \ar_(0.6){b}[r]<-1.5pt> & 2n-1 \ar^(0.6){a}[r]<1.5pt> \ar_(0.6){b}[r]<-1.5pt> & 2n}\right], \quad aa=bb.
\]
The algebra $k\Delta/\langle aa-bb \rangle$ has global dimension two.
\end{example}
\subsection{Relationship between endomorphism algebras associated with $w$ and $w^{\prime}$}\label{subendalg}
In this subsection, we prove Theorem \ref{endred} which is used to prove Proposition \ref{propsur}.
Throughout this subsection, we use the notation in Definition \ref{def-p-m-M}.

Assume that $v$ is a source in $Q$.
Let $Q^{\prime}=\mu_{v} (Q)$ be the quiver obtained by reversing all arrows starting at $v$.
Although the preprojective algebras $\Pi$ and $\Pi^{\prime}$ of $Q$ and $Q^{\prime}$, respectively, are the same as ungraded algebras, they have different gradings.

We first construct a functor from $\mod^{\mathbb{Z}}\Pi$ to $\mod^{\mathbb{Z}}\Pi^{\prime}$.
Let $\beta_{1}, \beta_{2}, \ldots, \beta_{r}$ be the arrows in $Q$ starting at $v$, and
	\[
	Q_{1}^{\prime}=\big(Q_{1}\setminus \{\beta_{1}, \beta_{2}, \ldots, \beta_{r}\}\big) \sqcup \{ \gamma_{1}, \ldots, \gamma_{r} \},
	\]
where $t(\gamma_{i})=v$, $t(\beta_{i})=s(\gamma_{i})$.
We have an isomorphism of algebras $\rho : k\overline{Q} \to k\overline{Q^{\prime}}$ given by $\rho(\beta_{i})=\gamma_{i}^{\ast}, \rho(\beta_{i}^{\ast})=-\gamma_{i}$, and $\rho(\alpha)=\alpha$ for other arrows.
Then $\rho$ induces an isomorphism of the preprojective algebras, we also denote it by $\rho$:
\begin{align}\label{pipi}
	\rho : \Pi \xto{\sim} \Pi^{\prime}.
\end{align}
By calculating the grading of paths of $k\overline{Q}$ and $k\overline{Q^{\prime}}$,
we have the following lemma, where $\delta_{u,v}=1$ if $u=v$ and $0$ otherwise for $u,v \in Q_{0}$.
\begin{lemma}\label{gradeofpath}
For $u,u^{\prime} \in Q_{0}$ and $i\in\mathbb{Z}$, by identifying $kQ$ with $kQ^{\prime}$ by $\rho$,
we have
	\begin{align*}
	e_{u}(k\overline{Q})_{i}e_{u^{\prime}} =e_{u}(k\overline{Q^{\prime}})_{i+\delta_{u,v}-\delta_{u^{\prime},v}}e_{u^{\prime}}.
	\end{align*}
Moreover, the equation also holds for $\Pi$ and $\Pi^{\prime}$, that is,
	\begin{align*}
	e_{u}\Pi_{i}e_{u^{\prime}} = e_{u}\Pi^{\prime}_{i+\delta_{u,v}-\delta_{u^{\prime},v}}e_{u^{\prime}}.
	\end{align*}
\end{lemma}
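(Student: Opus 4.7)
The plan is to reduce the statement to a path-by-path computation, carry out a combinatorial count at the vertex $v$, and then descend to the preprojective algebras via the homogeneity of the mesh relation.

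First I would work at the level of path algebras. Since $\rho \colon k\overline{Q} \to k\overline{Q'}$ is already an isomorphism of (ungraded) algebras fixing each idempotent $e_u$, it suffices to compute, for each monomial path $p \in e_u(k\overline{Q})e_{u'}$, the difference between $\deg(\rho(p))$ (computed in $k\overline{Q'}$) and $\deg(p)$ (computed in $k\overline{Q}$). Let $a$ (resp.\ $b$) be the number of occurrences in $p$ of an arrow of the form $\beta_i$ (resp.\ $\beta_i^*$); the remaining arrows appearing in $p$ are fixed by $\rho$ and retain their degrees. Since $\beta_i$ has degree $0$ while $\rho(\beta_i) = \gamma_i^*$ has degree $1$, and $\beta_i^*$ has degree $1$ while $\rho(\beta_i^*) = -\gamma_i$ has degree $0$, one obtains
\[ \deg(\rho(p)) - \deg(p) = a - b. \]

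The next step is to compute $a - b$ combinatorially, using crucially that $v$ is a source of $Q$: this forces the only arrows in $\overline{Q}$ incident to $v$ to be the $\beta_i$ (all with source $v$) and the $\beta_i^*$ (all with target $v$). Hence $a$ is the number of exits of the path $p$ from $v$ and $b$ is the number of entries of $p$ into $v$. With the paper's convention that an element of $e_u(k\overline{Q})e_{u'}$ is a path starting at $u$ and ending at $u'$, if $n_v$ denotes the total number of visits of $p$ to $v$, then each visit is followed by an exit except possibly the terminal one when $u' = v$, and each visit is preceded by an entry except possibly the initial one when $u = v$. Therefore $a = n_v - \delta_{u',v}$ and $b = n_v - \delta_{u,v}$, giving $a - b = \delta_{u,v} - \delta_{u',v}$, as required for the first assertion.

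Finally, to pass from $k\overline{Q}$ to the preprojective algebra $\Pi$, one uses that the mesh element $\sum_{\alpha \in Q_1}(\alpha\alpha^* - \alpha^*\alpha)$ is homogeneous of degree $1$ with respect to both gradings (which is immediate from the degree computation above specialized to mesh elements), so the defining ideal of $\Pi$ is graded; since $\rho$ is already known to descend to an isomorphism $\Pi \xrightarrow{\sim} \Pi'$, the path-algebra identity passes componentwise to the quotient to yield $e_u \Pi_i e_{u'} = e_u \Pi'_{i + \delta_{u,v} - \delta_{u',v}} e_{u'}$. No step is expected to present any real difficulty, since the whole argument is degree bookkeeping; the only subtlety is keeping track of the convention identifying $e_u p e_{u'}$ with a path from $u$ to $u'$, which fixes the correct signs of the $\delta$-terms in the shift.
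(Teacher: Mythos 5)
Your proof is correct and follows exactly the route the paper intends: the paper leaves the lemma as a direct "calculation of the grading of paths," and your argument (counting exits $a$ and entries $b$ at the source $v$ along each monomial path, getting the uniform shift $a-b=\delta_{u,v}-\delta_{u',v}$, then descending to $\Pi$ and $\Pi'$ via the homogeneity of the mesh relation and the isomorphism $\rho$) is precisely that calculation carried out in detail.
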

For a finitely generated graded $\Pi^{\prime}$-module $N$, we regard $\End_{\Pi}(N)$ as a graded algebra by $\End_{\Pi}(N)_{i}=\Hom_{\Pi}^{\mathbb{Z}}(N,N(i))$.
The graded preprojective algebras $\Pi$ and $\Pi^{\prime}$ are related as follows.
\begin{lemma}
We have an isomorphism of graded algebras
	\begin{align*}
	\Pi^{\prime} \to \End_{\Pi}(\Pi e_{v}(1)\oplus\Pi(1-e_{v})), \hspace{0.5cm}x\mapsto(\cdot \rho^{-1}(x)).
	\end{align*}
\end{lemma}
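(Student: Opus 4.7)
My plan is to factor the claimed morphism as
\[\Pi' \xrightarrow{\rho^{-1}} \Pi \xrightarrow{\sim} \End_\Pi(N),\]
where $N:=\Pi e_v(1)\oplus \Pi(1-e_v)$ and the second arrow sends $a\in\Pi$ to right multiplication $(\cdot a)$. As ungraded left $\Pi$-modules, $N$ coincides with $\Pi$, so the second arrow is just the classical identification $\Pi\xrightarrow{\sim}\End_\Pi(\Pi)$; in the paper's convention $fg=g\circ f$ one computes $(\cdot a)(\cdot b)=(\cdot b)\circ(\cdot a)=(\cdot ab)$, so this is a $k$-algebra isomorphism. Since $\rho$ is an isomorphism of ungraded algebras by~(\ref{pipi}), the composite is a bijective homomorphism of ungraded $k$-algebras, and it visibly coincides with the formula $x\mapsto(\cdot\rho^{-1}(x))$ in the statement.

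It then remains only to verify that this map is homogeneous, i.e.\ that it sends $\Pi'_i$ into $\End_\Pi^{\mathbb{Z}}(N)_i$. For this I would fix a homogeneous element $x\in e_u\Pi'_i e_{u'}$. By Lemma~\ref{gradeofpath}, $\rho^{-1}(x)$ lies in $e_u\Pi_k e_{u'}$ with $k=i-\delta_{u,v}+\delta_{u',v}$. Right multiplication by an element of $\Pi$ of ungraded degree $k$ is a $\Pi$-linear map $\Pi e_u\to\Pi e_{u'}$ that shifts degree by $k$; after passing to the shifted gradings that define the summands of $N$, this becomes a map $\Pi e_u(\delta_{u,v})\to\Pi e_{u'}(\delta_{u',v})$ of degree $k+\delta_{u,v}-\delta_{u',v}=i$, which matches $\deg x$ exactly.

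The main, and essentially the only, obstacle is the bookkeeping of the two signs $\delta_{u,v}$ and $\delta_{u',v}$: they must appear on the correct sides so that the shifts built into $N$ precisely cancel the degree change induced by $\rho$. Once Lemma~\ref{gradeofpath} is in hand, this verification is forced, and no further ingredient is needed to conclude that $x\mapsto(\cdot\rho^{-1}(x))$ is an isomorphism of graded algebras.
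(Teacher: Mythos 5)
Your proposal is correct and takes essentially the same route as the paper: the paper's proof also treats the ungraded identification $\Pi\simeq\End_{\Pi}(\Pi)$ composed with $\rho^{-1}$ as immediate and reduces everything to checking that the map preserves degrees, which it deduces from Lemma \ref{gradeofpath}. Your explicit bookkeeping with the shifts $\delta_{u,v}$ and $\delta_{u^{\prime},v}$ is precisely the verification the paper leaves implicit, and it is carried out with the correct signs.
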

\begin{proof}
It is enough to show that the morphism preserves gradings.
This follows from Lemma \ref{gradeofpath}.
\end{proof}
Then we construct a functor $\mathbb{G}$ from $\mod^{\mathbb{Z}}\Pi$ to $\mod^{\mathbb{Z}}\Pi^{\prime}$.
We need the following Lemma.
\begin{lemma}\label{idealend}
We have a surjective morphism of algebras $\Pi \to \End_{\Pi}(I_{v}), x\mapsto(\cdot x)$.
\end{lemma}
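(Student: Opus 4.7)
The map $x \mapsto (\cdot x)$ sending $x \in \Pi$ to right multiplication by $x$ is well-defined since $I_v$ is a two-sided ideal of $\Pi$, and under the paper's convention $fg = g \circ f$ one has $(\cdot x_1)(\cdot x_2) = (\cdot x_1 x_2)$, making it an algebra morphism. My plan for surjectivity is to combine two long exact sequences derived from the short exact sequence
\[
0 \to I_v \to \Pi \to S_v \to 0
\]
of left $\Pi$-modules, where $S_v$ denotes the simple module at $v$. The identification $\Pi/I_v \simeq S_v$ follows from Lemma \ref{iw}(a) applied to $w = s_v$: the full subquiver of $Q$ on $\supp(s_v) = \{v\}$ has no arrows by acyclicity, so $\Pi/I_v \simeq kQ^{(1)} = k$.

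Applying $\Hom_\Pi(I_v,-)$ yields the exact sequence
\[
0 \to \End_\Pi(I_v) \to \Hom_\Pi(I_v, \Pi) \to \Hom_\Pi(I_v, S_v),
\]
and I plan to argue $\Hom_\Pi(I_v, S_v) = 0$, so that the canonical inclusion $\End_\Pi(I_v) \hookrightarrow \Hom_\Pi(I_v, \Pi)$ becomes an equality. For this I decompose $I_v = \Pi(1-e_v) \oplus \rad(\Pi e_v)$ (using $I_v e_u = \Pi e_u$ for $u \ne v$ and $I_v e_v = \rad(\Pi e_v)$). The first summand has top $\bigoplus_{u \ne v} S_u$, which contains no $S_v$. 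The top of $\rad(\Pi e_v)$ is spanned by the classes of the arrows $\beta_i^* \in e_{t(\beta_i)} \Pi e_v$, where the $\beta_i$ are the outgoing arrows at $v$ in $Q$, giving $\bigoplus_i S_{t(\beta_i)}$; since $v$ is a source of the acyclic quiver $Q$, each $t(\beta_i) \ne v$, so no copy of $S_v$ appears in the top of $I_v$.

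Dually, applying $\Hom_\Pi(-, \Pi)$ to the same short exact sequence produces
\[
\Pi = \Hom_\Pi(\Pi, \Pi) \to \Hom_\Pi(I_v, \Pi) \to \Ext^1_\Pi(S_v, \Pi) \to 0,
\]
so the surjectivity of $\Pi \to \End_\Pi(I_v)$ reduces to the vanishing $\Ext^1_\Pi(S_v, \Pi) = 0$. I will handle this with a small case distinction: if $Q$ is non-Dynkin, Proposition \ref{birs}(a) already supplies an isomorphism $\Pi \xrightarrow{\sim} \End_\Pi(I_v)$, hence the surjection is immediate; if $Q$ is Dynkin, $\Pi$ is finite-dimensional and self-injective, so $\Ext^i_\Pi(-, \Pi) = 0$ for all $i \ge 1$. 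The main obstacle to a uniform proof would be showing middle-exactness of the induced two-term complex $e_v \Pi \xrightarrow{(\beta_i^* \cdot)} \bigoplus_i e_{t(\beta_i)} \Pi \xrightarrow{(\sum \beta_i \cdot)} e_v \Pi$ (coming from the first two steps of the projective resolution of $S_v$), but the case split above circumvents it using only previously established results.
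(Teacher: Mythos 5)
Your proof is correct, and for the non-Dynkin case it is literally the paper's argument: both simply invoke Proposition \ref{birs} (a) (applied with $w=s_v$). Where you genuinely diverge is the Dynkin case. The paper settles it by citing \cite[Lemma 2.7]{M}, whereas you give a direct homological argument: from $0\to I_v\to\Pi\to S_v\to 0$ you reduce surjectivity of $x\mapsto(\cdot x)$ to $\Ext^1_\Pi(S_v,\Pi)=0$, which you deduce from the classical fact that preprojective algebras of Dynkin type are finite dimensional and self-injective. That fact is neither stated nor proved in the paper, so your route trades one external input (Mizuno's lemma) for another (self-injectivity), but the latter is standard and your reduction is sound; it has the advantage of making the mechanism transparent, at the cost of a slightly longer argument. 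Two small remarks. First, the $\Ext^1$-vanishing alone already yields surjectivity: once $\Hom_\Pi(\Pi,\Pi)\to\Hom_\Pi(I_v,\Pi)$ is onto, any $g\in\End_\Pi(I_v)$, composed with the inclusion $I_v\hookrightarrow\Pi$, lifts to some $(\cdot x)$, and then $(\cdot x)=g$ as endomorphisms of $I_v$; so the preliminary step $\Hom_\Pi(I_v,S_v)=0$ is a convenience rather than a necessity. Second, you phrase that step via the top of $\rad(\Pi e_v)$, which is slightly delicate when $\Pi$ is infinite dimensional (non-Dynkin, where the Jacobson radical need not be the arrow ideal); this is harmless here, since in that case the lemma is already settled by Proposition \ref{birs} (a), and in the Dynkin case $\Pi$ is finite dimensional. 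Alternatively one can avoid tops altogether: every left-module generator of $I_v=\Pi(1-e_v)\oplus I_ve_v$ satisfies $g=e_ug$ for some $u\neq v$ (the idempotents $e_u$, and the arrows of $\overline{Q}$ ending at $v$, which start at vertices $\neq v$ since $Q$ has no loops), hence is annihilated by any morphism to $S_v$.
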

\begin{proof}
If $Q$ is a non-Dynkin quiver, then the assertion follows from Proposition \ref{birs} (a).
If $Q$ is a Dynkin quiver, then the assertion follows from \cite[Lemma 2.7]{M}.
\end{proof}
More precisely, we have the following surjective morphism of graded algebras.
\begin{lemma}\label{gradofepi}
Let $v \in Q_{0}$ be a source and $U:=I_{v}e_{v}(1)\oplus \Pi(1-e_{v}) \in \mod^{\mathbb{Z}}\Pi$.
Then we have a surjective morphism of graded algebras 
	\begin{align*}
	\Pi^{\prime} \to \End_{\Pi}(U), \hspace{0.5cm}x\mapsto(\cdot \rho^{-1}(x)).
	\end{align*}
Moreover, we have the following surjective morphism of graded algebras
	\begin{align*}
	\Pi^{\prime} \to \End_{\Pi}(\Pi/I_{v}), \hspace{0.5cm}x\mapsto(\cdot \pi(\rho^{-1}(x))),
	\end{align*}
where $\pi : \Pi \to \Pi/I_{v}$ is the canonical surjection.
\end{lemma}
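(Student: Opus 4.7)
The plan is to derive both surjections from the graded isomorphism $\Pi^{\prime} \simeq \End_{\Pi}(\Pi e_{v}(1) \oplus \Pi(1-e_{v}))$ of the preceding lemma, combined with Lemma~\ref{idealend}. For part~(a), I first verify that for every $x \in \Pi^{\prime}$, right multiplication by $\rho^{-1}(x)$ preserves the submodule $U = I_{v}e_{v}(1) \oplus \Pi(1-e_{v})$ inside $\Pi e_{v}(1) \oplus \Pi(1-e_{v})$. By linearity and multiplicativity this reduces to checking the generators of $\Pi^{\prime}$. The idempotents $e_{u}$ and arrows of $\overline{Q^{\prime}}$ not incident to $v$ are routine, because such right multiplications respect the direct sum decomposition of $U$. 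For the arrow $\gamma_{i}$ with $\rho^{-1}(\gamma_{i}) = -\beta_{i}^{\ast}$, the essential identity is
\[
\beta_{i}^{\ast} = e_{w_{i}}\beta_{i}^{\ast} e_{v} \in \Pi(1-e_{v})\Pi e_{v} = I_{v}e_{v},
\]
which uses that $v$ is a source, so $w_{i} \neq v$ and $e_{w_{i}} \in \Pi(1-e_{v})$. Hence right multiplication by $\beta_{i}^{\ast}$ carries $U e_{w_{i}} = \Pi e_{w_{i}}$ into $I_{v}e_{v} = U e_{v}$. For $\gamma_{i}^{\ast}$ with $\rho^{-1}(\gamma_{i}^{\ast}) = \beta_{i}$, right multiplication by $\beta_{i}$ takes $U e_{v} \subseteq \Pi e_{v}$ into $\Pi e_{w_{i}} = U e_{w_{i}}$, which trivially lies in $U$.

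Given this containment, restricting the graded algebra morphism of the preceding lemma along the inclusion $U \hookrightarrow \Pi e_{v}(1) \oplus \Pi(1-e_{v})$ yields a well-defined graded algebra homomorphism $\Pi^{\prime} \to \End_{\Pi}(U)$. For surjectivity, observe that $I_{v} = I_{v}e_{v} \oplus \Pi(1-e_{v})$ as ungraded left $\Pi$-modules, since $I_{v}e_{u} = \Pi e_{u}$ whenever $u \neq v$. Hence $U$ and $I_{v}$ agree as ungraded $\Pi$-modules, so $\End_{\Pi}(U) = \End_{\Pi}(I_{v})$ as ungraded algebras. Lemma~\ref{idealend} provides a surjection $\Pi \to \End_{\Pi}(I_{v})$, $y \mapsto (\cdot y)$, and precomposing with the isomorphism $\rho^{-1} : \Pi^{\prime} \to \Pi$ yields the required surjectivity.

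For part~(b), since $I_{v}$ is a two-sided ideal of $\Pi$, right multiplication by any $y \in \Pi$ descends to a well-defined endomorphism of $\Pi/I_{v}$, which gives a surjective algebra homomorphism $\Pi \to \End_{\Pi}(\Pi/I_{v})$, $y \mapsto (\cdot\pi(y))$ (surjectivity follows from cyclicity of $\Pi/I_{v}$). Composing with $\rho^{-1}$ gives the map in the statement. Concretely, because $v$ is a source, every positive-length path in $\Pi e_{v}$ factors through some $w_{i} \neq v$ and is killed modulo $I_{v}$; consequently $\Pi/I_{v}$ is one-dimensional and concentrated in degree zero, so $\End_{\Pi}(\Pi/I_{v}) \cong k$ is likewise concentrated in degree zero. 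The composed map $\Pi^{\prime} \to \End_{\Pi}(\Pi/I_{v})$ therefore factors as the projection of $\Pi^{\prime}$ onto its $e_{v}$-coefficient, which is manifestly a graded surjective morphism.

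The main obstacle is the first step of part~(a): one must carefully track the idempotent decomposition of $U$ and the signs in $\rho$ to confirm that right multiplication by each generator of $\Pi^{\prime}$ stays inside $U$. Once this containment is established, the remaining ingredients from Lemma~\ref{idealend} and the preceding graded isomorphism assemble straightforwardly.
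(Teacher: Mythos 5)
Your proof is correct, and it takes a mildly different route from the paper's for the grading part. The paper proves gradedness by redoing the bidegree computation: using Lemma \ref{gradeofpath} it checks, in four cases according to whether $u$ or $u'$ equals $v$, that $U_{j}e_{u}\cdot e_{u}\Pi_{i+\delta_{u',v}-\delta_{u,v}}e_{u'}\subseteq U(i)_{j}e_{u'}$, and then dismisses $\Pi/I_{v}$ with ``a similar calculation''; surjectivity is quoted from Lemma \ref{idealend} together with $\rho$ being an isomorphism, exactly as you do. You instead deduce the graded statement for $U$ by restricting the graded isomorphism $\Pi'\simeq\End_{\Pi}(\Pi e_{v}(1)\oplus\Pi(1-e_{v}))$ of the preceding lemma to $U$, after checking that right multiplication by the generators of $\Pi$ preserves $U$ --- which, as you also observe, is automatic because $U$ coincides with the two-sided ideal $I_{v}$ once the grading is forgotten; for $\Pi/I_{v}$ you give a more elementary argument, noting that $\Pi/I_{v}\cong k$ is concentrated in degree zero so that gradedness amounts to the map killing homogeneous elements of nonzero degree (equivalently, being the $e_{v}$-coefficient projection), and surjectivity follows from cyclicity rather than from Lemma \ref{idealend}. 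What your route buys is that the case analysis is not repeated and part (b) becomes transparent; what the paper's computation buys is a single uniform template covering both $U$ and $\Pi/I_{v}$ without invoking the one-dimensionality of $\Pi/I_{v}$. Two small points to make explicit: the restriction step uses that $U=I_{v}e_{v}(1)\oplus\Pi(1-e_{v})$ is a \emph{graded} submodule of $\Pi e_{v}(1)\oplus\Pi(1-e_{v})$, so that a degree-$i$ endomorphism preserving $U$ restricts to a degree-$i$ endomorphism of $U$; and the one-dimensionality of $\Pi/I_{v}$ really rests on the absence of loops at $v$ (acyclicity of $Q$) rather than on $v$ being a source, though your argument via the starred arrows is valid.
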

\begin{proof}
The morphism is surjective since $\rho$ is an isomorphism and by Lemma \ref{idealend}.
We have to show that the composite is a morphism of graded algebras.

By Lemma \ref{gradeofpath}, for $u,u^{\prime}\in Q_{0}$, we have 
	\[
	 e_{u}(\Pi^{\prime}_{i})e_{u^{\prime}} = e_{u}(\Pi_{i+\delta_{u^{\prime},v}-\delta_{u, v}})e_{u^{\prime}}.
	\]
Moreover, for $u,u^{\prime}\in Q_{0}$ and $j\in\mathbb{Z}$, we have
\begin{align*}
U_{j}e_{u}\cdot e_{u}(\Pi_{i+\delta_{u^{\prime},v}-\delta_{u, v}})e_{u^{\prime}} &= \begin{cases}
				(I_{v})_{j+1}e_{u}\cdot e_{u}\Pi_{i}e_{u^{\prime}} & u=u^{\prime}=v\\
				(I_{v})_{j+1}e_{u}\cdot e_{u}\Pi_{i-1}e_{u^{\prime}} & u=v, u^{\prime}\neq v\\
				\Pi_{j}e_{u}\cdot e_{u}\Pi_{i+1}e_{u^{\prime}} & u\neq v, u^{\prime}=v\\
				\Pi_{j}e_{u}\cdot e_{u}\Pi_{i}e_{u^{\prime}} & u\neq v, u^{\prime}\neq v\\
					\end{cases}\\
				&\subset U(i)_{j}e_{u^{\prime}}.
\end{align*}
Thus, the morphism $\Pi^{\prime} \to \End_{\Pi}(U)$ is a morphism of graded algebras.
The other follows from a similar calculation.
\end{proof}
By Lemma \ref{gradofepi}, we have a functor 
		\[
		 \mathbb{G}:=\Hom_{\Pi}(U,-) : \mod^{\mathbb{Z}}\Pi \to \mod^{\mathbb{Z}}\Pi^{\prime}.
		\]
		where the grading on the $\Pi^{\prime}$-module $\mathbb{G}(X)$ is given by $\mathbb{G}(X)_{i}:=\Hom_{\Pi}^{\mathbb{Z}}(U,X(i))$.
This functor satisfies $\mathbb{G}\circ (i)\simeq (i)\circ \mathbb{G}$ for any $i\in\mathbb{Z}$.

To show Proposition \ref{Gto}, we recall the following proposition.
For a reduced expression $w=s_{u_1}s_{u_2}\cdots s_{u_l}$, let $I_{k,m}=I_{u_{k}\cdots u_{m}}$ if $k \leq m$ and $I_{k,m}=\Pi$ if $m < k$.
\begin{proposition}\cite[Lemma I\hspace{-.1em}I\hspace{-.1em}I. 1.14]{BIRSc}\label{birslem1.14}
Assume that $s_{u_1}s_{u_2}\cdots s_{u_l}$ is a reduced expression and $l\geq 2$.
Then we have $I_{k+1,m}/I_{1,m} \simeq \Hom_{\Pi}(\Pi/I_{u_{1}\cdots u_k}, \Pi/I_{u_{1}\cdots u_m})$ by $x \mapsto (\cdot x)$.
\end{proposition}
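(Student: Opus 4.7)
My plan is to construct the natural right-multiplication map, verify well-definedness and injectivity directly, and establish surjectivity by induction on $m-k$ via a long exact sequence argument.

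First I would define $\Phi \colon I_{k+1,m} \to \Hom_\Pi(\Pi/I_{u_1\cdots u_k}, \Pi/I_{u_1\cdots u_m})$ by $\Phi(x)(\overline{y}) := \overline{yx}$. Well-definedness follows from $I_{u_1\cdots u_k}\cdot I_{k+1,m} = I_{u_1\cdots u_m}$, itself a consequence of the independence of $I_w$ from the choice of reduced expression (\cite[Theorem III.1.9]{BIRSc}). The kernel is $\{x \in I_{k+1,m} : \Pi x \subset I_{u_1\cdots u_m}\} = I_{u_1\cdots u_m}$, using the containment $I_{u_1\cdots u_m} \subset I_{k+1,m}$, so $\Phi$ induces an injection
\[
\overline{\Phi} \colon I_{k+1,m}/I_{u_1\cdots u_m} \hookrightarrow \Hom_\Pi(\Pi/I_{u_1\cdots u_k}, \Pi/I_{u_1\cdots u_m}).
\]

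For surjectivity, given $\phi$, any lift $x \in \Pi$ of $\phi(\overline{1})$ must satisfy $I_{u_1\cdots u_k} x \subset I_{u_1\cdots u_m}$, and the task reduces to showing that such an $x$ necessarily lies in $I_{k+1,m}+I_{u_1\cdots u_m}=I_{k+1,m}$. I would proceed by induction on $m-k$. The base case $m=k$ amounts to the equality $\End_\Pi(\Pi/I_{u_1\cdots u_k}) \simeq \Pi/I_{u_1\cdots u_k}$, which is immediate since $I_{u_1\cdots u_k}$ is a two-sided ideal. For the inductive step I would apply $\Hom_\Pi(\Pi/I_{u_1\cdots u_k},-)$ to the short exact sequence
\[
0 \to I_{u_1\cdots u_{m-1}}/I_{u_1\cdots u_m} \to \Pi/I_{u_1\cdots u_m} \to \Pi/I_{u_1\cdots u_{m-1}} \to 0
\]
and compare the resulting long exact sequence with the short exact sequence
\[
0 \to I_{k+1,m}/I_{u_1\cdots u_m} \to I_{k+1,m-1}/I_{u_1\cdots u_m} \to I_{k+1,m-1}/I_{k+1,m} \to 0,
\]
invoking the induction hypothesis to identify the middle terms.

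The main obstacle is matching up the outer terms, specifically establishing the isomorphism $\Hom_\Pi(\Pi/I_{u_1\cdots u_k}, I_{u_1\cdots u_{m-1}}/I_{u_1\cdots u_m}) \simeq I_{k+1,m-1}/I_{k+1,m}$. The source module here is precisely the layer module $L_w^m$ of Proposition \ref{airtprop}, which is concentrated at the vertex $u_m$ (that is, $L_w^m = L_w^m e_{u_m}$); homomorphisms into it are therefore controlled by the $e_{u_m}$-component of the source modulo the relations imposed by $I_{u_1\cdots u_k}$. Carrying this out cleanly requires the structural results of BIRSc on the ideals $I_v$, in particular the isomorphism $\Pi \simeq \End_\Pi(I_v)$ of Proposition \ref{birs}(a) (with its Dynkin analogue), which supplies the cancellation property needed to identify the relevant colon ideals and close the induction.
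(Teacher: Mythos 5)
Your easy steps are fine: right multiplication is well defined because $I_{u_1\cdots u_k}\cdot I_{k+1,m}=I_{u_1\cdots u_m}$ by the very definition of the ideals, the kernel computation gives injectivity, and the base case $\End_{\Pi}(\Pi/I_{u_1\cdots u_k})\simeq \Pi/I_{u_1\cdots u_k}$ is the standard fact for a two-sided ideal. But note first that the paper offers no proof to compare with -- it imports this statement verbatim from [BIRSc, Lemma III.1.14] -- and, more importantly, your plan leaves exactly the hard half unproved: surjectivity is the entire content of the lemma (it is also the only place the hypothesis that the expression is \emph{reduced} can enter, and your sketch never uses reducedness anywhere). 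You reduce it to the ``main obstacle'' isomorphism
\[
\Hom_{\Pi}\bigl(\Pi/I_{u_1\cdots u_k},\, I_{u_1\cdots u_{m-1}}/I_{u_1\cdots u_m}\bigr)\;\simeq\; I_{k+1,m-1}/I_{k+1,m},
\]
and then wave at ``structural results of BIRSc''; this identification is in fact false. Take $Q$ of type $A_2$ with arrow $1\to 2$, $w=s_1s_2s_1$ (reduced), $k=1$, $m=3$: then $I_{u_1u_2}/I_{u_1u_2u_1}$ is one-dimensional, isomorphic to the simple at vertex $2$, so the left-hand side is $\Hom_{\Pi}(\text{simple at }1,\text{simple at }2)=0$, while $I_{u_2}/I_{u_2u_1}$ is two-dimensional. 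The term that actually sits in the kernel of $\pi_*\colon\Hom(\Pi/I_{1,k},\Pi/I_{1,m})\to\Hom(\Pi/I_{1,k},\Pi/I_{1,m-1})$ is $(I_{k+1,m}\cap I_{1,m-1})/I_{1,m}$, not $I_{k+1,m-1}/I_{k+1,m}$.

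There is also a structural problem with the comparison itself: the two sequences you propose to match do not align. The middle term of your ideal-theoretic sequence is $I_{k+1,m-1}/I_{1,m}$, whereas the inductive hypothesis identifies $I_{k+1,m-1}/I_{1,m-1}$ with the \emph{third} term $\Hom(\Pi/I_{1,k},\Pi/I_{1,m-1})$ of the Hom sequence (right multiplication by elements of $I_{k+1,m-1}$ does not even give well-defined maps into $\Pi/I_{1,m}$, since $I_{1,k}I_{k+1,m-1}=I_{1,m-1}\not\subset I_{1,m}$). Moreover your argument never controls the cokernel of $\pi_*$, i.e.\ the connecting map into $\Ext^1_{\Pi}(\Pi/I_{1,k},I_{1,m-1}/I_{1,m})$ -- in the $A_2$ example above $\pi_*$ is not surjective -- and this is precisely where the real work (and the use of reducedness, the projective dimension one tilting property of the ideals $I_v$, etc.) occurs in the actual BIRSc proof. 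Finally, the appeal to $\Pi\simeq\End_{\Pi}(I_v)$ is only a surjection in the Dynkin case, and no argument is given for how it ``supplies the cancellation property''; as it stands the proposal proves injectivity of the map but not the lemma.
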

We apply the same construction as (\ref{m}) to the reduced expression $w^{\prime}:=s_{u_2}s_{u_3}\cdots s_{u_l}$.
Put
\begin{center}
$p_{u}^{\prime}=\Max\{2 \leq j \leq l \mid u_{j}=u\}-1,$ \hspace{0.4cm} for $u \in \supp(w^{\prime})$,\\
$m_{i}^{\prime} = \sharp\{ 2\leq j \leq i-1\mid u_j=u_i\},$ \hspace{0.4cm} for $2\leq i \leq l.$
\end{center}
Moreover, for $2 \leq i \leq l$, put
\[M^{\prime i-1}:=\left((\Pi^{\prime}/I^{\prime}_{u_2\cdots u_i})e_{u_{i}}\right)(m^{\prime}_{i}), \hspace{0.5cm} P^{\prime}=\bigoplus_{u\in\supp(w^{\prime})}M^{\prime p_{u}^{\prime}}.\]
We have $\Pi^{\prime}_{w^{\prime}}=\bigoplus_{u\in\supp(w^{\prime})}M^{\prime p^{\prime}_{u}}(-m^{\prime}_{p^{\prime}_{u}})$.
Put $M^{\prime}=\bigoplus_{i=2}^{l}M^{\prime i-1}$.
\begin{proposition}\label{Gto}
Assume that $w=s_{u_1}s_{u_2}\cdots s_{u_l}$ is a reduced expression of an element of $W_{Q}$ and $u_{1}=v$ is a source of $Q$, $l \geq 2$.
Let $w^{\prime}=s_{u_2}\cdots s_{u_l}$.
Then
\begin{itemize}
	\item[(a)]
	$\mathbb{G}(M^{1})=0$.
	
	\item[(b)]
	For $2\leq j \leq l$, we have an isomorphism $\psi_{j} : \mathbb{G}(M^{j})\xto{\sim} M^{\prime j-1}$ in $\mod^{\mathbb{Z}}\Pi^{\prime}$, that is,
	\[
    	\psi_{j} : \mathbb{G}\left((\Pi/I_{1, j})e_{u_{j}}\right)(m_{j}) \xto{\sim} \left((\Pi^{\prime}/I^{\prime}_{2, j})e_{u_{j}}\right)(m_{j}^{\prime}).
	\]
	\if()
	Let $m$ be the number of the set $\{ 1\leq k \leq l-1\mid u_k=u_l\}$ and 
	let $m^{\prime}$ be the number of the set $\{ 2\leq k \leq l-1\mid u_k=u_l\}$.
	Then we have an isomorphism in $\mod^{\mathbb{Z}}\Pi^{\prime}$
	\[
	\mathbb{G}\left((\Pi/I_{w})e_{u_{l}}(m)\right) \simeq (\Pi^{\prime}/I^{\prime}_{w^{\prime}})e_{u_{l}}(m^{\prime}).
	\]\fi
	
	\item[(c)]We have 
	$\psi=\bigoplus_{j=1}^{l}\psi_{j} : \mathbb{G}(M)=\mathbb{G}(M/M^{1}) \xto{\sim} M^{\prime}$ in $\mod^{\mathbb{Z}}\Pi^{\prime}$.
\end{itemize}
\end{proposition}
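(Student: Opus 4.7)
The plan is to observe that, since $\mathbb{G}$ is additive and $M=M^{1}\oplus\bigoplus_{j\geq 2}M^{j}$ as graded $\Pi$-modules, item (c) is an immediate consequence of (a) and (b); hence the real content lies in those two parts. For (a), I will use that $u_{1}=v$ is a source of $Q$, so every path of positive length ending at $v$ passes through a vertex other than $v$ and therefore lies in $I_{v}$; consequently $M^{1}=(\Pi/I_{v})e_{v}=S_{v}$. The standard decomposition
\[
\mathbb{G}(S_{v})=\Hom_{\Pi}(I_{v}e_{v},S_{v})(-1)\oplus(1-e_{v})S_{v}
\]
then finishes the job: the second summand vanishes since $S_{v}=e_{v}S_{v}$, and the first vanishes because $I_{v}e_{v}=\rad(\Pi e_{v})$ has its top generated by the classes of $\beta_{i}^{\ast}e_{v}$ for arrows $\beta_{i}:v\to t(\beta_{i})$ in $Q_{1}$, with every $t(\beta_{i})\neq v$ since $v$ is a source.

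For (b), the plan is to construct an inverse $\psi_{j}^{-1}:(\Pi'/I'_{2,j})e_{u_{j}}(m'_{j})\to\mathbb{G}\bigl((\Pi/I_{1,j})e_{u_{j}}(m_{j})\bigr)$ by
\[
\psi_{j}^{-1}(\bar{x})(u):=u\cdot\rho^{-1}(x)+I_{1,j},
\]
where $\bar{x}\in(\Pi'/I'_{2,j})e_{u_{j}}$, $u\in U$, $x\in\Pi' e_{u_{j}}$ is any lift, and $\rho$ is the algebra isomorphism of \eqref{pipi}. Since $\rho$ preserves vertex idempotents, one has $\rho^{-1}(I'_{2,j})=I_{2,j}$ as ungraded two-sided ideals, and the inclusions $\Pi(1-e_{v})\subseteq I_{v}$ (because $1-e_{v}\in I_{v}$) and $I_{v}e_{v}\subseteq I_{v}$ together give $U\cdot I_{2,j}\subseteq I_{v}\cdot I_{2,j}=I_{1,j}$, making $\psi_{j}^{-1}$ well-defined. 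The $\Pi'$-linearity is built into the definition of the $\Pi'$-action on $\mathbb{G}$, and the grading check reduces to the elementary identity $m_{j}-m'_{j}=\delta_{u_{j},v}$, which exactly matches the shift absorbed by the $(1)$-shift on the $I_{v}e_{v}$-summand of $U$ via Lemma \ref{gradofepi}.

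To verify bijectivity I will invoke Proposition \ref{birslem1.14} with $k=1,\;m=j$, which yields $(I_{2,j}/I_{1,j})e_{u_{j}}\simeq\Hom_{\Pi}(\Pi/I_{v},(\Pi/I_{1,j})e_{u_{j}})$. For injectivity, if $\psi_{j}^{-1}(\bar{x})=0$, taking $u=1-e_{v}$ forces $(1-e_{v})\rho^{-1}(x)\in I_{1,j}$, while letting $u$ run over $I_{v}e_{v}$ yields $I_{v}\cdot e_{v}\rho^{-1}(x)\subseteq I_{1,j}$; the class of $e_{v}\rho^{-1}(x)$ in $(\Pi/I_{1,j})e_{u_{j}}$ is then supported at $v$ and annihilated by $I_{v}$, so by the Proposition it corresponds to an element of $(I_{2,j}/I_{1,j})e_{u_{j}}$, forcing $\rho^{-1}(x)\in I_{2,j}$ and hence $\bar{x}=0$. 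For surjectivity, any $f:U\to(\Pi/I_{1,j})e_{u_{j}}$ is determined by its restrictions to $\Pi(1-e_{v})$ (given by $f(1-e_{v})$) and to $I_{v}e_{v}$ (identified via the same Proposition with an element of $e_{v}(\Pi/I_{1,j})e_{u_{j}}$), and these assemble into the required $\bar{x}$. The main obstacle is precisely this bijectivity step: since $I_{v}e_{v}$ is not projective, applying $\Hom_{\Pi}(-,\cdot)$ brings in an $\Ext^{1}(S_{v},-)$-contribution through the SES $0\to I_{v}e_{v}\to\Pi e_{v}\to S_{v}\to 0$, and Proposition \ref{birslem1.14} is exactly the input that converts this Ext-error into the target quotient $(\Pi'/I'_{2,j})e_{u_{j}}$.
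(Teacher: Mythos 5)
Your parts (a) and (c), the well-definedness and $\Pi'$-linearity of $\psi_j^{-1}$, and the injectivity argument are all fine and essentially parallel the paper: in particular your injectivity step is correct once phrased as saying that the hypotheses make right multiplication by $e_v\rho^{-1}(x)$ a well-defined morphism $\Pi/I_v\to\Pi/I_{1,j}$, so that Proposition \ref{birslem1.14} (with $k=1$, $m=j$) plus evaluation at $1$ gives $e_v\rho^{-1}(x)\in I_{2,j}$, and together with $(1-e_v)\rho^{-1}(x)\in I_{1,j}\subset I_{2,j}$ this forces $\bar x=0$.

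The surjectivity step, however, has a genuine gap. You assert that the restriction of an arbitrary $f\in\Hom_{\Pi}(U,(\Pi/I_{1,j})e_{u_j})$ to $I_ve_v$ is ``identified via the same Proposition with an element of $e_v(\Pi/I_{1,j})e_{u_j}$,'' i.e.\ that every morphism $I_ve_v\to(\Pi/I_{1,j})e_{u_j}$ is the restriction of a right multiplication by some element of $e_v\Pi e_{u_j}$. That is exactly the statement that the restriction map $\Hom_{\Pi}(\Pi e_v,(\Pi/I_{1,j})e_{u_j})\to\Hom_{\Pi}(I_ve_v,(\Pi/I_{1,j})e_{u_j})$ is surjective, equivalently that the connecting map into $\Ext^1_{\Pi}(S_v,(\Pi/I_{1,j})e_{u_j})$ vanishes; it is not a formality (for a general target module maps from $I_ve_v$ need not extend to $\Pi e_v$), and Proposition \ref{birslem1.14} cannot supply it, since that proposition computes a Hom-space, not an Ext-group. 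This is precisely where the paper inserts a nontrivial input: it applies $\Hom_{\Pi}(-,\Pi/I_{1,j})$ to the exact sequence $0\to U\to\Pi e_v(1)\oplus\Pi(1-e_v)\to(\Pi/I_v)(1)\to 0$, uses Proposition \ref{birslem1.14} to identify the first two terms with $(I'_{2,j}/I'_{1,j})(-\delta_{v,u_j})$ and $(\Pi'/I'_{1,j})(-\delta_{v,u_j})$, and then needs $\Ext^1_{\Pi}(\Pi/I_v,\Pi/I_{1,j})=0$ — obtained from Proposition \ref{birs} (e), the rigidity of the Buan--Iyama--Reiten--Scott cluster tilting object — to get exactness of the Hom-sequence on the right and hence the isomorphism onto $(\Pi'/I'_{2,j})(-\delta_{v,u_j})$. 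Your closing remark names the $\Ext^1(S_v,-)$ obstruction but then claims Proposition \ref{birslem1.14} ``converts this Ext-error into the target quotient''; it does not, and without invoking an Ext-vanishing statement of the above type your surjectivity argument does not go through.
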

\begin{proof}
(\rm a)
Since a simple module associated with $u_{1}=v$ does not appear in $\top(U)$, we have $\Hom_{\Pi}(U,M^{1})=0$.

(\rm b)
Since $m_{j}^{\prime}=m_{j}-\delta_{v, u_{j}}$ holds,
we show that 
\[
\mathbb{G}\left((\Pi/I_{1, j})e_{u_{j}}\right) \simeq \left((\Pi^{\prime}/I^{\prime}_{2, j})e_{u_{j}}\right)(-\delta_{v, u_{j}}).
\]
By a similar calculation of the proof of Lemma \ref{gradofepi}, we have the following morphism of graded $\Pi^{\prime}$-modules
\begin{align*}
 &(I_{2, j}^{\prime}/I_{1, j}^{\prime})(-\delta_{v, u_{j}}) \to \Hom_{\Pi}((\Pi/I_{v})(1), \Pi/I_{1, j}),\\
& (\Pi^{\prime}/I_{1, j}^{\prime})(-\delta_{v, u_{j}}) \to \Hom_{\Pi}(\Pi e_{v}(1)\oplus\Pi(1-e_{v}), \Pi/I_{1, j}),
\end{align*}
where both of them are defined by $x \mapsto (\cdot \rho^{-1}(x))$.
These morphisms are isomorphisms by Proposition \ref{birslem1.14}.
By Proposition \ref{birs} (e), $\Ext_{\Pi}^{1}(\Pi/I_{v}, \Pi/I_{1,j})=0$ holds.
Applying the functor $\Hom_{\Pi}(-,\Pi/I_{1, j})$ to the exact sequence \[0\to U\to\Pi e_{v}(1)\oplus\Pi(1-e_{v})\to(\Pi/I_{v})(1)\to0,\] we have the following commutative diagram of exact sequence in $\mod^{\mathbb{Z}}\Pi^{\prime}$;

\xymatrix{
0 \ar[r] & (I_{2,j}^{\prime}/I_{1,j}^{\prime})(-\delta_{v, u_{j}}) \ar[r] \ar[d]^{\simeq} & (\Pi^{\prime}/I_{1, j}^{\prime})(-\delta_{v, u_{j}}) \ar[r] \ar[d]^{\simeq} & (\Pi^{\prime}/I_{2, j}^{\prime})(-\delta_{v, u_{j}}) \ar[r] \ar[d] & 0 \\
0 \ar[r] & {}_{\Pi}((\Pi/I_{v})(1), \Pi/I_{1, j}) \ar[r] & {}_{\Pi}(\Pi e_{v}(1)\oplus\Pi(1-e_{v}), \Pi/I_{1, j}) \ar[r] & {}_{\Pi}(U, \Pi/I_{1, j}) \ar[r] & 0.}
Therefore we have the assertion.

(\rm c)
This comes from (a) and (b).
\end{proof}

The following lemma is used later.
\begin{lemma}\label{Gcor}
Under the setting in Proposition \ref{Gto},
for the functor $\mathbb{G} : \mod^{\mathbb{Z}}\Pi \to \mod^{\mathbb{Z}}\Pi^{\prime}$, we have
\begin{itemize}
\item[(a)]
$\mathbb{G}$ restricts to a dense functor $\proj^{\mathbb{Z}}\Pi_{w}$ to $\proj^{\mathbb{Z}}\Pi^{\prime}_{w^{\prime}}$.
\item[(b)]
For $i\in \mathbb{Z}$, the map $\mathbb{G}_{M,M(i)}$ is surjective.
\end{itemize}
\end{lemma}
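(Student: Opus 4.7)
The plan is to reduce both (a) and (b) to the description of $\mathbb{G}$ on the modules $M^i$ supplied by Proposition \ref{Gto}.

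For (a), the first step is to observe that for any reduced expression and any $u\in\supp(w)$, one has $I_w e_u = I_{u_1\cdots u_{p_u}} e_u$: the inclusion $\supseteq$ is immediate, while the reverse one follows because $e_u\in I_{u_k}$ for every $k>p_u$ (by the definition of $p_u$), so that $e_u\in I_{u_{p_u+1}}\cdots I_{u_l}$. Hence $\Pi_w e_u = M^{p_u}(-m_{p_u})$ as graded $\Pi_w$-modules, and similarly $\Pi'_{w'} e_u = M'^{p'_u}(-m'_{p'_u})$ for $u\in\supp(w')$. Applying $\mathbb{G}$ and invoking Proposition \ref{Gto} then yields
\[
\mathbb{G}(\Pi_w e_u(j)) \simeq M'^{p_u-1}(j-m_{p_u}) = \Pi'_{w'} e_u(j-\delta_{u,v})
\]
whenever $u\in\supp(w')$ (using $m_{p_u}-m'_{p_u}=\delta_{u,v}$ and $p_u-1=p'_u$), and $\mathbb{G}(\Pi_w e_v(j))=0$ in the remaining case $p_v=1$. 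This shows that $\mathbb{G}$ sends $\proj^{\mathbb{Z}}\Pi_w$ into $\proj^{\mathbb{Z}}\Pi'_{w'}$; density follows because $\Pi_w e_u(j+\delta_{u,v})$ is a preimage of $\Pi'_{w'} e_u(j)$ for every $u\in\supp(w')$ and $j\in\mathbb{Z}$.

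For (b), by Proposition \ref{Gto}(c) the target identifies with $\Hom^{\mathbb{Z}}_{\Pi'_{w'}}(M',M'(i))$, and by Theorem \ref{quiver} together with Proposition \ref{gradedhom} both endomorphism algebras are graded quotients of the path algebras $kQ_w$ and $kQ_{w'}$ respectively. The key observation to exploit is that $Q_{w'}$ is naturally the full subquiver of $Q_w$ on the vertices $\{2,\ldots,l\}$: the defining conditions for left-going arrows, $Q$-arrows, and $Q^{\ast}$-arrows involve only indices $\geq 2$ and are insensitive to whether one restricts the index range to $\{2,\ldots,l\}$ or to $\{1,\ldots,l\}$; here the source property of $v=u_1$ is used to rule out $Q$- and $Q^{\ast}$-arrows having the vertex $1$ as a target. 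Consequently $\End^{\mathbb{Z}}_{\Pi'_{w'}}(M')$ is generated as a graded $k$-algebra by the images $\phi'(\beta)$ of arrows $\beta$ in $Q_{w'}$, and each such $\beta$ is literally an arrow of $Q_w$ between vertices in $\{2,\ldots,l\}$.

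The core technical step is then to verify that, under the isomorphisms $\psi_j:\mathbb{G}(M^j)\xto{\sim}M'^{j-1}$ of Proposition \ref{Gto}(b), one has $\mathbb{G}(\phi(\beta))=\phi'(\beta)$ for every such $\beta$. For left-going arrows this amounts to naturality of $\psi$ with respect to factor-module surjections. For $Q$- and $Q^{\ast}$-arrows, using the algebra isomorphism $\rho:\Pi\xto{\sim}\Pi'$ of (\ref{pipi}), right multiplication by $\alpha$ (resp.\ $\alpha^{\ast}$) on $M^j$ is intertwined with right multiplication by $\rho(\alpha)$ (resp.\ $\rho(\alpha^{\ast})$) on $M'^{j-1}$; for $\alpha\in Q_1\setminus\{\beta_1,\ldots,\beta_r\}$ this is the same $\alpha$, and for $\alpha=\beta_i$ it is $\gamma_i^{\ast}$ (or $-\gamma_i$), matching the definition of $\phi'$ on the corresponding arrow of $Q_{w'}$ up to a harmless sign. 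The main obstacle anticipated is the bookkeeping in this last verification: one has to track the isomorphism $\psi_j$ coming from the connecting diagram in Proposition \ref{Gto}(b) and check that it intertwines the explicit right-multiplication and factor maps on both sides. Once this is granted, every arrow-generator of $\End^{\mathbb{Z}}_{\Pi'_{w'}}(M')$ lies in the image of $\mathbb{G}_{M,M}$, and surjectivity of $\Hom^{\mathbb{Z}}_{\Pi_w}(M,M(i))\to\Hom^{\mathbb{Z}}_{\Pi'_{w'}}(M',M'(i))$ for each $i$ follows.
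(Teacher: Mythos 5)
Your proof of (a) is essentially the paper's: both rest on the decompositions $\Pi_w=\bigoplus_{u\in\supp(w)}M^{p_u}(-m_{p_u})$ and $\Pi'_{w'}=\bigoplus_{u\in\supp(w')}M'^{p'_u}(-m'_{p'_u})$ together with Proposition \ref{Gto}; your explicit tracking of the shift by $\delta_{u,v}$ just spells out what the paper leaves implicit. For (b), however, you take a genuinely different route. The paper does not use the quivers $Q_w$, $Q_{w'}$ here: it reduces to the summands $M^j,M^k$ with $2\le j,k\le l$ (the cases $j=1$ or $k=1$ being trivial since $\mathbb{G}(M^1)=0$), identifies both graded Hom-spaces via Proposition \ref{birslem1.14} with the degree pieces $\bigl(e_{u_j}(I_{j+1,k}/I_{1,k})e_{u_k}\bigr)_{m_k-m_j+i}$ and $\bigl(e_{u_j}(I'_{j+1,k}/I'_{2,k})e_{u_k}\bigr)_{m'_k-m'_j+i}$, and observes that $\rho$ induces a surjection between them by Lemma \ref{gradeofpath}, the commutative diagram (\ref{diagramHI}) then forcing $\mathbb{G}_{M^j,M^k(i)}$ to be surjective. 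You instead invoke the graded presentation $\phi'\colon kQ_{w'}\twoheadrightarrow\End_{\Pi'_{w'}}(M')$ (Theorem \ref{quiver} and Proposition \ref{gradedhom}), identify $Q_{w'}$ with the full subquiver of $Q_w$ on $\{2,\dots,l\}$ (with $Q$- and $Q^{\ast}$-labels interchanged for arrows incident to $v$), and show that the graded algebra map $G=\psi\circ\mathbb{G}(-)\circ\psi^{-1}$ hits the arrow generators, whence degreewise surjectivity. This is correct and buys a reduction to finitely many generators, but it costs two verifications the paper's route avoids: the combinatorial identification of $Q_{w'}$ inside $Q_w$ (your stated reason, that the source property of $v$ rules out arrows targeting the vertex $1$, is beside the point, since $Q$- and $Q^{\ast}$-arrows always go from a smaller to a larger index; what matters is that the defining conditions for arrows among $\{2,\dots,l\}$ never refer to the vertex $1$), and the intertwining $G(\phi(\beta))=\phi'(\beta)$ up to sign, which is precisely the content the paper packages, for all morphisms at once, into the commutativity of (\ref{diagramHI}) via the identifications $x\mapsto(\cdot x)$ of Proposition \ref{birslem1.14}; your sketch of that check through $\rho$ and the explicit form of $\psi_j$ is the right one, so the argument goes through.
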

\begin{proof}
(\rm a)
This comes from $\Pi_{w}=\bigoplus_{u \in \supp(w)}M^{p_{u}}(-m_{p_{u}})$, $\Pi^{\prime}_{w^{\prime}}=\bigoplus_{u\in\supp(w^{\prime})}M^{\prime p^{\prime}_{u}}(-m^{\prime}_{p^{\prime}_{u}})$, and Proposition \ref{Gto}.

(\rm b)
It is enough to show that the map $\mathbb{G}_{M^{j},M^{k}(i)}$ is surjective for $2\leq j,k\leq l$.
By Lemma \ref{birslem1.14} (b), 
we have 
	\begin{align*}
	\Hom_{\Pi}^{\mathbb{Z}}(M^{j},M^{k}(i))=\left( e_{u_j}\frac{I_{j+1,k}}{I_{1,k}}e_{u_{k}} \right)_{m_k-m_j+i},\\
	\Hom_{\Pi^{\prime}}^{\mathbb{Z}}(M^{\prime j-1},M^{\prime k-1}(i))=\left( e_{u_{j}}\frac{I^{\prime}_{j+1,k}}{I^{\prime}_{2,k}}e_{u_{k}} \right)_{m_k^{\prime}-m^{\prime}_j+i}.
	\end{align*}
For $2\leq j,k\leq l$, an equation $m_{k}^{\prime}-m_{j}^{\prime}+i=m_{k}-m_{j}+\delta_{u_{j},v}-\delta_{u_{k},v}+i$ holds.
Thus $\rho : \Pi\to\Pi^{\prime}$ maps $\left( e_{u_{j}}(I_{j+1,k}/I_{1,k})e_{u_{k}} \right)_{m_k-m_j+i}$ to $\left( e_{u_{j}}(I^{\prime}_{j+1,k}/I^{\prime}_{2,k})e_{u_{k}} \right)_{m_k^{\prime}-m^{\prime}_j+i}$ by Lemma \ref{gradeofpath}.
We have the following commutative diagram
	\begin{align}\label{diagramHI}
	\xymatrix{
	\Hom_{\Pi}^{\mathbb{Z}}(M^{j},M^{k}(i)) \ar[r]^/-0.4cm/{\mathbb{G}_{M^{j},M^{k}(i)}} \ar[d]^{\simeq} & \Hom_{\Pi^{\prime}}^{\mathbb{Z}}(\mathbb{G}(M^{j}),\mathbb{G}(M^{k})(i)) \ar[r]_{\sim}^{\alpha} &\Hom_{\Pi^{\prime}}^{\mathbb{Z}}(M^{\prime j-1},M^{\prime k-1}(i)) \ar[d]^{\simeq} \\
	\left( e_{u_j}\frac{I_{j+1,k}}{I_{1,k}}e_{u_{k}} \right)_{m_k-m_j+i} \ar[rr] & & \left( e_{u_{j}}\frac{I^{\prime}_{j+1,k}}{I^{\prime}_{2,k}}e_{u_{k}} \right)_{m_k^{\prime}-m^{\prime}_j+i},
	}
	\end{align}
where the lower map is induced by $\rho : \Pi \to \Pi^{\prime}$, and $\alpha$ is defined by $\alpha(f)=\psi_{k}(i)\circ f \circ \psi_{j}^{-1}$.
Since the lower map is surjective and $\alpha$ is an isomorphism by Proposition \ref{Gto} (b), we have that $\mathbb{G}_{M^{j},M^{k}(i)}$ is surjective.
\end{proof}
The following theorem is a graded version of \cite[Theorem 3.1, (ii)]{IR} and the main theorem of this subsection.
\begin{theorem}\label{endred}
Under the setting in Proposition \ref{Gto},
we have an isomorphism of algebras
\[
\underline{G} : \End_{\Pi_{w}}^{\mathbb{Z}}(M)/[M^{1}(i) \mid 0 \leq i \leq p_{u_{1}}] \xto{\sim} \End_{\Pi_{w^{\prime}}^{\prime}}^{\mathbb{Z}}(M^{\prime}),
\]
where $G(-)=\psi\circ{\mathbb{G}}_{M,M}(-)\circ\psi^{-1}$ and $[M^{1}(i) \mid 0 \leq i \leq p_{u_{1}}]$ is an ideal of $\End_{\Pi_{w}}^{\mathbb{Z}}(M)$ consisting of morphisms factoring through objects in $\add \{M^{1}(i) \mid 0\leq i \leq p_{u_{1}} \}$.
\end{theorem}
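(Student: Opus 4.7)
The plan is to establish the isomorphism by showing that the map $G = \psi \circ \mathbb{G}_{M,M}(-) \circ \psi^{-1}$ is already a surjective morphism of graded $k$-algebras whose kernel is exactly the ideal $[M^{1}(i) \mid 0 \leq i \leq p_{u_1}]$. Since $\mathbb{G} = \Hom_{\Pi}(U,-)$ is a $k$-linear functor satisfying $\mathbb{G}\circ (i) \simeq (i) \circ \mathbb{G}$, and $\psi$ is a graded isomorphism by Proposition \ref{Gto}(c), conjugation yields a morphism of graded $k$-algebras $G : \End_{\Pi_w}^{\mathbb{Z}}(M) \to \End_{\Pi_{w^{\prime}}^{\prime}}^{\mathbb{Z}}(M^{\prime})$. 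Decomposing the target as $\bigoplus_{i \in \mathbb{Z}} \Hom_{\Pi^{\prime}}^{\mathbb{Z}}(M^{\prime}, M^{\prime}(i))$, surjectivity is immediate from Lemma \ref{Gcor}(b).

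Next I would identify the kernel. The easy direction $[M^{1}(i) \mid 0 \leq i \leq p_{u_1}] \subseteq \ker G$ follows from Proposition \ref{Gto}(a): since $\mathbb{G}(M^{1}) = 0$, any morphism factoring through a shift of $M^{1}$ is annihilated. For the reverse inclusion I would work component by component. A morphism $f : M^{j} \to M^{k}(a)$ with $G(f) = 0$ corresponds, via diagram (\ref{diagramHI}) and Proposition \ref{birslem1.14}, to an element $x \in (e_{u_j}(I_{j+1,k}/I_{1,k})e_{u_k})_{m_k - m_j + a}$ whose image under $\rho$ in $(e_{u_j}(I^{\prime}_{j+1,k}/I^{\prime}_{2,k})e_{u_k})_{m_k^{\prime} - m_j^{\prime} + a}$ vanishes. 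Since $I_{1,k} = I_v I_{2,k}$ and, for $j \geq 2$, $I_{2,k} \subseteq I_{j+1,k}$, the kernel of this reduction is $e_{u_j}(I_{2,k}/I_{1,k})e_{u_k}$. When $u_j \neq v$, the identity $e_{u_j} = e_{u_j}(1-e_v)e_{u_j}$ gives $e_{u_j} \in I_v$, hence $e_{u_j} I_{2,k} \subseteq I_v I_{2,k} = I_{1,k}$, and this group vanishes. When $u_j = v$, Proposition \ref{birslem1.14} identifies the kernel with $\Hom_{\Pi}((\Pi/I_v)e_v, (\Pi/I_{1,k})e_{u_k})$, yielding a factorization
\[
f : M^{j} = (\Pi/I_{1,j})e_v(m_j) \twoheadrightarrow (\Pi/I_v)e_v(m_j) = M^{1}(m_j) \xrightarrow{\cdot x} M^{k}(a)
\]
through the canonical quotient. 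The case $j=1$ is handled trivially since $M^{1} = M^{1}(0)$ already lies in the prescribed set.

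The main obstacle will be the careful bookkeeping of grading shifts to confirm that every factoring shift $M^{1}(m_j)$ lies in the prescribed range $0 \leq m_j \leq p_{u_1}$: since $m_j$ counts earlier occurrences of $v$ in the reduced expression, one has $0 \leq m_j \leq m_{p_{u_1}} < p_{u_1}$, so all required shifts lie in the claimed range. A secondary technical point is verifying that multiplication and grading are preserved through all identifications by $\psi$ and $\rho$; this follows from the functoriality of $\mathbb{G}$ combined with Lemma \ref{gradeofpath}, which ensures that $\rho$ is a graded algebra isomorphism compatible with the decomposition by vertices. The crucial idempotent computation $e_{u_j} \in I_v$ for $u_j \neq v$ is what cleanly isolates the nontrivial part of the kernel and pins the factoring objects to shifts of $M^{1}$.
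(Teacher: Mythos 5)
Your proposal is correct and follows essentially the same route as the paper: surjectivity from Lemma \ref{Gcor}(b), the easy inclusion from $\mathbb{G}(M^{1})=0$ (Proposition \ref{Gto}(a)), and the reverse inclusion by computing $\Ker(\mathbb{G}_{M^{j},M^{k}})$ through diagram (\ref{diagramHI}) and Proposition \ref{birslem1.14}, splitting into the cases $u_{j}\neq u_{1}$ (where the kernel vanishes, your $e_{u_{j}}\in I_{u_{1}}$ computation being equivalent to the paper's $e_{u_{j}}I_{1,k}=e_{u_{j}}I_{2,k}$) and $u_{j}=u_{1}$ (where the paper writes the same factorization as the product $\Hom_{\Pi}^{\mathbb{Z}}(M^{1}(m_{j}),M^{k})\circ\Hom_{\Pi}^{\mathbb{Z}}(M^{j},M^{1}(m_{j}))$ that you realize via the canonical quotient followed by $(\cdot x)$). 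Your explicit check that the shifts $m_{j}$ lie in $[0,p_{u_{1}}]$ is a detail the paper leaves implicit, but there is no substantive difference.
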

\begin{proof}
We show that $G$ is surjective and $\Ker(G)=[M^{1}(i) \mid 0 \leq i \leq p_{u_{1}}]$.

(\rm i)
By Lemma \ref{Gcor} (b), $G$ is surjective.

(\rm ii)
Since $\psi$ is an isomorphism, we have $\Ker(G)=\Ker(\mathbb{G}_{M,M})$.
We show that $\Ker (\mathbb{G}_{M,M})=[M^{1}(i) \mid 0 \leq i \leq p_{u_{1}}]$.
By Proposition \ref{Gto} (a), we have $[M^{1}(i) \mid 0 \leq i \leq p_{u_{1}}]\subset\Ker \left( \mathbb{G}_{M^{j},M^{k}} \right)$.
Conversely, we show that $\Ker \left( \mathbb{G}_{M^{j},M^{k}} \right)\subset[M^{1}(i) \mid 0 \leq i \leq p_{u_{1}}]$ for $2\leq j, k\leq l$.
By the commutative diagram (\ref{diagramHI}), we have 
\begin{align*}
\Ker \left( \mathbb{G}_{M^{j},M^{k}} \right) =\left( e_{u_j}\frac{I_{2,k}}{I_{1,k}}e_{u_{k}} \right)_{m_k-m_j}.
\end{align*}
If $u_{j}\neq u_{1}$, then $e_{u_{j}}I_{1, k}=e_{u_{j}}I_{2, k}$ and we have $\Ker \left( \mathbb{G}_{M^{j},M^{k}} \right)=0$.
If $u_{j}=u_{1}$, then we have 
\begin{align*}
\Ker \left( \mathbb{G}_{M^{j},M^{k}} \right) & = \left( e_{u_j}\frac{I_{2,k}}{I_{1,k}}e_{u_{k}} \right)_{m_k-m_j} \\
						& = \left( e_{u_{j}}\frac{\Pi}{I_{u_1}}e_{u_{1}} \right) \left( e_{u_1}\frac{I_{2,k}}{I_{1,k}}e_{u_{k}} \right)_{m_k-m_j}\\
						& =\Hom_{\Pi}^{\mathbb{Z}}(M^{1}(m_{j}),M^{k}) \circ \Hom_{\Pi}^{\mathbb{Z}}(M^{j},M^{1}(m_{j})).
\end{align*}
In particular, we have $\Ker \left( \mathbb{G}_{M^{j},M^{k}} \right)\subset[M^{1}(i) \mid 0 \leq i \leq p_{u_{1}}]$.
\end{proof}
We end this subsection by showing the following lemma which is used later to show Lemma \ref{diagram}.
For a source $v\in Q_{0}$ and $Q^{\prime}=\mu_{v}(Q)$, we have the reflection functor
\[ \mod\,kQ \xto{R^{+}_{v}} \mod\,kQ^{\prime}. \]
Note that $U$ is generated by $U_{0}$ as a left $\Pi$-module.
In fact, $I_{v}e_{u}=\Pi e_{u}$ is generated by $e_{u}$ for $u\not=v$ and $I_{v}e_{v}$ is generated by all arrows in $\overline{Q}$ starting at $v$.We denote by $\mathbb{F}^{\prime}$ the degree zero functor on $\mod^{\mathbb{Z}}\Pi^{\prime}$:
\[
\mathbb{F}^{\prime}=(-)_{0}:\mod^{\mathbb{Z}}\Pi^{\prime}\to\mod kQ^{\prime}.
\]
\begin{lemma}\label{G0R0}
Let $v$ be a source of $Q$  and $Q^{\prime}=\mu_{v}(Q)$.
\begin{itemize}
\item[(a)]
We have a morphism of functors $\phi :  \mathbb{F}^{\prime}\circ\mathbb{G}\to R_{u_{1}}^{+}\circ\mathbb{F}$.
\item[(b)]
For any $X\in\mod^{\leq 0}\Pi$, $\phi_{X}:\mathbb{G}(X)_{0}\to R_{v}^{+}(X_{0})$ is an isomorphism of $kQ^{\prime}$-modules, that is, the following diagram  of functors is commutative on $\mod^{\leq 0}\Pi$:
\[
\xymatrix{
\mod^{\mathbb{Z}}\Pi \ar[r]^{\mathbb{G}} \ar[d]^{\mathbb{F}} & \mod^{\mathbb{Z}}\Pi^{\prime} \ar[d]^{\mathbb{F}^{\prime}} \\
\mod\,kQ \ar[r]^{R^{+}_{v}} & \mod\,kQ^{\prime}.
}
\]
\end{itemize}
\end{lemma}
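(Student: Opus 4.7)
The plan is to define the natural transformation $\phi_X : \mathbb{G}(X)_0 \to R_v^+(X_0)$ vertex by vertex, using the decomposition $U = I_v e_v(1) \oplus \Pi(1-e_v)$. At any $u \neq v$, the summand $\Pi e_u$ of $U$ gives $e_u \mathbb{G}(X)_0 = \Hom_\Pi^{\mathbb{Z}}(\Pi e_u, X) = e_u X_0$, which equals $e_u R_v^+(X_0)$ since a source reflection leaves vertex spaces at vertices different from $v$ unchanged; I set $\phi_{X,u} = \mathrm{id}$. At $u = v$, since $v$ is a source the ideal $I_v e_v$ coincides with the radical of $\Pi e_v$ and is generated as a left $\Pi$-module by $\beta_1^*, \ldots, \beta_r^*$, where $\beta_1, \ldots, \beta_r$ are the arrows of $Q$ starting at $v$. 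Hence any $f \in e_v \mathbb{G}(X)_0 = \Hom_\Pi^{\mathbb{Z}}(I_v e_v(1), X)$ is determined by the tuple $(f(\beta_i^*))_i \in \bigoplus_i e_{t(\beta_i)} X_0$, and the preprojective relation $\sum_i \beta_i \beta_i^* = 0$ at $v$ forces $\sum_i \beta_i f(\beta_i^*) = 0$ in $e_v X_0$, placing the tuple in $\Ker(\phi_v) = e_v R_v^+(X_0)$. I define $\phi_{X,v}(f) := (f(\beta_i^*))_i$, up to a global sign matching $\rho^{-1}(\gamma_i) = -\beta_i^*$.

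Naturality in $X$ is immediate. For $kQ'$-linearity, arrows of $Q'$ not incident to $v$ act as the corresponding arrows of $Q$ on both sides by naturality of $\Hom$, so only the new arrows $\gamma_i : t(\beta_i) \to v$ of $Q'$ require checking. Unwinding the right $\Pi'$-action on $U$ given by $x \mapsto (\,\cdot\, \rho^{-1}(x))$, I compute
\begin{equation*}
(\gamma_i f)(e_{t(\beta_i)}) = f(e_{t(\beta_i)} \cdot \rho^{-1}(\gamma_i)) = -f(\beta_i^*),
\end{equation*}
which, after the sign adjustment, matches the image under $\gamma_i$ of the $v$-component of $\phi_X(f)$ on the reflection side, where $\gamma_i$ acts as projection from $\Ker(\phi_v)$ onto the $i$-th summand.

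For part (b), the iso at $u \neq v$ is trivial. At $v$ I invoke the beginning of the minimal graded projective resolution of $S_v$:
\begin{equation*}
\Pi e_v \xrightarrow{e_v \mapsto (\beta_i)_i} \bigoplus_i \Pi e_{t(\beta_i)} \xrightarrow{e_{t(\beta_i)} \mapsto \beta_i^*} I_v e_v(1) \to 0,
\end{equation*}
which is exact at the middle term for any finite acyclic $Q$, since the $\Pi$-relations among the generators $\beta_i^*$ of $I_v e_v(1)$ are generated by the single preprojective relation at $v$. Applying $\Hom_\Pi^{\mathbb{Z}}(-,X)$ and using $\Hom_\Pi^{\mathbb{Z}}(\Pi e_u, X) = e_u X_0$ yields the exact sequence
\begin{equation*}
0 \to \Hom_\Pi^{\mathbb{Z}}(I_v e_v(1), X) \to \bigoplus_i e_{t(\beta_i)} X_0 \xrightarrow{\phi_v} e_v X_0,
\end{equation*}
identifying $e_v \mathbb{G}(X)_0$ with $\Ker(\phi_v) = e_v R_v^+(X_0)$; the hypothesis $X = X_{\leq 0}$ enters by ensuring that both sides depend on $X$ only through its degree-zero part. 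The main obstacle will be the $kQ'$-linearity verification for the new arrows $\gamma_i$, which requires carefully tracking the right $\Pi'$-action on $U$ through $\rho$ and fixing a sign convention for the source reflection so that both sides agree on the nose; everything else follows from the explicit generators-and-relations description of $I_v e_v(1)$ combined with left-exactness of $\Hom$.
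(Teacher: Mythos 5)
Your approach is genuinely different from the paper's, and most of it is sound, but as written it rests on one unproved step. The paper's proof is short and soft: it sets $\phi_{X}(f)=f|_{U_{0}}$, identifies $U_{0}=\tau^{-}(kQe_{v})\oplus kQ(1-e_{v})$ as the APR-tilting module so that $R_{v}^{+}\simeq\Hom_{kQ}(U_{0},-)$, gets injectivity of $\phi_{X}$ from the fact that $U$ is generated in degree zero, and gets surjectivity for $X\in\mod^{\leq 0}\Pi$ by extending any $kQ$-morphism $U_{0}\to X_{0}$ by zero on $U_{\geq 1}$, which is a graded $\Pi$-morphism precisely because $\Pi$ is positively graded and $X_{\geq 1}=0$; no presentation of $I_{v}e_{v}$ is ever needed. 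You instead compute $\mathbb{G}(X)_{0}$ vertex by vertex and identify $e_{v}\mathbb{G}(X)_{0}=\Hom_{\Pi}^{\mathbb{Z}}(I_{v}e_{v}(1),X)$ with the kernel defining $e_{v}R_{v}^{+}(X_{0})$ by applying $\Hom_{\Pi}^{\mathbb{Z}}(-,X)$ to the complex $\Pi e_{v}\to\bigoplus_{i}\Pi e_{t(\beta_{i})}\to I_{v}e_{v}(1)\to 0$. The crux of your argument is that this complex is exact at the middle term, and your justification (``the relations among the generators $\beta_{i}^{\ast}$ are generated by the single preprojective relation at $v$'') is a restatement of that exactness, not a proof of it. The statement is in fact true, but it is not formal: for non-Dynkin $Q$ it follows from the standard length-two projective resolution of $S_{v}$ over $\Pi$ (using $\gl\Pi=2$), while for Dynkin $Q$ — which is allowed here — it requires the known beginning of the minimal projective (bimodule) resolution of $\Pi$ (Erdmann--Snashall, Brenner--Butler--King type results). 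So you must either supply such an argument or cite it; this is a much heavier input than anything the paper uses.

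A telling symptom: once the middle exactness is granted, your argument never actually uses the hypothesis $X\in\mod^{\leq 0}\Pi$ (your remark that it ``ensures both sides depend only on $X_{0}$'' is not where, or how, it enters), so you are implicitly proving a stronger statement whose entire weight is carried by the unproved exactness claim; the paper's extension-by-zero surjectivity argument is exactly the device that makes this input unnecessary on $\mod^{\leq 0}\Pi$. The rest of your proposal — the identification $e_{u}\mathbb{G}(X)_{0}=e_{u}X_{0}$ for $u\neq v$, the relation $\sum_{i}\beta_{i}f(\beta_{i}^{\ast})=0$ coming from the preprojective relation at the source $v$, naturality, and the sign bookkeeping for the new arrows $\gamma_{i}$ via $\rho$ — is correct and consistent with the paper's conventions.
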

\begin{proof}
By the definition of the functor $\mathbb{G}$, we have $\mathbb{G}(X)_{0}=\Hom_{\Pi}^{\mathbb{Z}}(U,X)$.
Since $\Pi_{i}\simeq\tau^{-i}(kQ)$ as $kQ$-modules, $U_{0}=\tau^{-}(kQe_{v})\oplus kQ(1-e_{v})$ holds and this is an APR-tilting $kQ$-module associated with $v$.
Therefore we have a morphism of $kQ^{\prime}$-modules
\[\phi_{X} : \mathbb{G}(X)_{0}=\Hom_{\Pi}^{\mathbb{Z}}(U,X) \to \Hom_{kQ}(U_{0},X_{0})=R_{v}^{+}(X_{0}),\]
given by $\phi_{X}(f)=f|_{U_{0}}$.
Clearly this gives a morphism $\phi :  \mathbb{F}^{\prime}\circ\mathbb{G}\to R_{u_{1}}^{+}\circ\mathbb{F}$ of functors.
Since $U$ is generated by $U_{0}$ as a graded $\Pi$-module, a morphism $f\in\Hom_{\Pi}^{\mathbb{Z}}(U,X)$ is determined by $\phi_{X}(f)$.
This implies that $\phi_{X}$ is injective.

We show that $\phi_{X}$ is surjective when $X$ is in $\mod^{\leq 0}\Pi$.
Let $g\in\Hom_{kQ}(U_{0},X_{0})$.
We define a morphism $f : U\to X$ of  $kQ$-modules by $f|_{U_{0}}=g$ and $f|_{U_{\geq 1}}=0$.
Then $f$ gives a morphism in $\mod^{\mathbb{Z}}\Pi$, since $X\in\mod^{\leq 0}\Pi$ and $\Pi$ is positively graded.
\end{proof}
\subsection{\underline{F}\,is surjective}\label{uF-surj}
We use the notation in Subsection \ref{welldef} and \ref{subendalg}.
For a quiver $Q$, we denote by $W_{Q}$ the Coxeter group of $Q$.
Assume that $w=c^{(0)}c^{(1)}\cdots c^{(m)}=s_{u_1}s_{u_2}\cdots s_{u_l}$ is a $c$-sortable element of $W_{Q}$.
Without loss of generality by Lemma \ref{airtlem2.1}, we assume that $Q_{0}=\supp(w)$.
Let $Q^{\prime}=\mu_{u_1}(Q)$.
We show that the morphism $\underline{F} : \underline{\End}_{\Pi_w}^{\mathbb{Z}}(M) \to \End_{kQ}(M_0)/[T]$ is surjective.
We first prove the following lemma.
\begin{lemma}\label{sur}
An element $w^{\prime}=s_{u_2}\cdots s_{u_l}$ is a $(s_{u_1}cs_{u_1})$-sortable element in $W_{Q^{\prime}}$.
\end{lemma}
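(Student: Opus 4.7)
The plan begins by identifying the Coxeter element $s_{u_1}cs_{u_1}$ explicitly. The hypothesis $e_{u_j}(kQ)e_{u_i}=0$ for $i<j$ on $c$ forces $u_1$ to be a source of $Q$, so reversing the arrows at $u_1$ turns it into a sink of $Q'$. Writing $c=s_{u_1}s_{v_1}\cdots s_{v_{n-1}}$ with $\{v_1,\ldots,v_{n-1}\}=Q_0\setminus\{u_1\}$, one has $s_{u_1}cs_{u_1}=s_{v_1}\cdots s_{v_{n-1}}s_{u_1}$. Since paths in $Q'$ avoiding $u_1$ are the same as in $Q$ and $u_1$ has no outgoing arrows in $Q'$, this expression satisfies the convention of the paper for $Q'$; call this Coxeter element $c'$.

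Next I would analyze where $u_1$ occurs in the expression $w=c^{(0)}c^{(1)}\cdots c^{(m)}$. Since $Q_0=\supp(w)=\supp(c^{(0)})$, let $k_0$ be the largest index with $u_1\in\supp(c^{(k)})$. For $0\le k\le k_0$, the subsequence $c^{(k)}$ of $c$ must begin with $s_{u_1}$; write $c^{(k)}=s_{u_1}d^{(k)}$ where $d^{(k)}$ is a subsequence of $s_{v_1}\cdots s_{v_{n-1}}$. For $k>k_0$, $c^{(k)}$ is already such a subsequence. Removing the initial $s_{u_1}$ of $w$ gives
\[
w'=d^{(0)}\,s_{u_1}d^{(1)}\,s_{u_1}\,d^{(2)}\cdots s_{u_1}\,d^{(k_0)}\,c^{(k_0+1)}\cdots c^{(m)},
\]
and this is a reduced expression of $w'$ because $\ell(w')=\ell(w)-1$ (the reduced expression of $w$ starts with $s_{u_1}$).

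I would then exhibit the $c'$-sortable grouping by setting $c'^{(k)}=d^{(k)}s_{u_1}$ for $0\le k\le k_0-1$, $c'^{(k_0)}=d^{(k_0)}$, and $c'^{(k)}=c^{(k)}$ for $k>k_0$. Each $c'^{(k)}$ is visibly a subsequence of $c'=s_{v_1}\cdots s_{v_{n-1}}s_{u_1}$. For the nesting of supports, observe $\supp(c'^{(k)})=\supp(c^{(k)})$ when $k\ne k_0$, while $\supp(c'^{(k_0)})=\supp(c^{(k_0)})\setminus\{u_1\}$; the required chain then follows from the one for $w$, using the maximality of $k_0$ (which yields $u_1\notin\supp(c^{(k_0+1)})$, hence $\supp(c^{(k_0+1)})\subset\supp(c^{(k_0)})\setminus\{u_1\}$).

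The only mildly delicate point is the bookkeeping around the index $k_0$ and confirming that the regrouping preserves the chain of supports; no genuine obstacle is expected, as the argument is essentially formal once the correct partition of the letters is identified.
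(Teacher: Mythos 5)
Your proof is correct and follows essentially the same route as the paper: the paper sets $a=\Max\{k \mid u_1\in\supp(c^{(k)})\}$ and defines $c^{\prime(k)}=s_{u_1}c^{(k)}s_{u_1}$ for $k<a$, $c^{\prime(a)}=s_{u_1}c^{(a)}$, $c^{\prime(k)}=c^{(k)}$ for $k>a$, which is exactly your regrouping $d^{(k)}s_{u_1}$, $d^{(k_0)}$, $c^{(k)}$. Your added details (reducedness via $\ell(w')=\ell(w)-1$ and the explicit support check using maximality of $k_0$) only make explicit what the paper leaves implicit.
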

\begin{proof}
It is clear that $s_{u_1}cs_{u_1}$ is a Coxeter element of $W_{Q^{\prime}}$ admissible with respect to the orientation of $Q^{\prime}$.
Let $a=\Max\{k \mid u_1 \in \supp(c^{(k)})\}$.
Put
	\begin{align}
	c^{\prime(k)} = \begin{cases}
				s_{u_1}c^{(k)}s_{u_1} & 0 \leq k \leq a-1\\
				s_{u_1}c^{(k)} & k=a\\
				c^{(k)} & a+1 \leq k \leq m.
			\end{cases}\notag
	\end{align}
Then we have a reduced expression  $w^{\prime}=c^{\prime(0)}c^{\prime(1)}\cdots c^{\prime(m^{\prime})}$, where $m^{\prime}=m-1$ if $\supp(c^{(m)})=\{ u_{1}\}$, and $m^{\prime}=m$ if otherwise.
Since each $c^{\prime(k)}$ is a subword of $s_{u_1}cs_{u_1}$, 
$w^{\prime}$ is a $(s_{u_1}cs_{u_1})$-sortable element.
\end{proof}
Let $w^{\prime}=s_{u_2}\cdots s_{u_l}$.
By Proposition \ref{Gto} (c), there exists the isomorphism of graded $\Pi^{\prime}$-modules \[\psi : \mathbb{G}(M/M^{1}) \xto{\sim} M^{\prime}.\]
By using $\psi$, we have an isomorphism of algebras 
\[\alpha : \End_{\Pi^{\prime}}^{\mathbb{Z}}(\mathbb{G}(M/M^{1})) \to \End_{\Pi^{\prime}}^{\mathbb{Z}}(M^{\prime })\]
defined by $\alpha(f)=\psi \circ f \circ \psi^{-1}$.
Moreover we have an isomorphism of algebras
\[\alpha_{0} : \End_{kQ^{\prime}}(\mathbb{G}(M/M^{1})_{0}) \to \End_{kQ^{\prime}}(M^{\prime }_{0})\]
defined by $\alpha_{0}(f)=\psi_{0} \circ f \circ \psi_{0}^{-1}$, where $\psi_{0}=\psi|_{\mathbb{G}(M/M^{1})_{0}}$.
Let 
 \[F_{>1}:=\mathbb{F}_{M/M^{1},M/M^{1}}:\End_{\Pi}^{\mathbb{Z}}(M/M^{1}) \to \End_{kQ}\left( (M/M^{1})_{0} \right).\]
\begin{lemma}\label{diagram}
The following diagram is commutative: 
\begin{align}\label{comdiagram1}
	\xymatrix{\End_{\Pi}^{\mathbb{Z}}(M/M^{1}) \ar[r]^{G_{>1}} \ar[d]^{F_{>1}} & \End_{\Pi^{\prime}}^{\mathbb{Z}}(\mathbb{G}(M/M^{1})) \ar[d]^{\overline{F}^{\prime}} \ar[r]^/0.2cm/{\alpha}_/0.2cm/{\sim} & \End_{\Pi^{\prime}}^{\mathbb{Z}}(M^{\prime }) \ar[d]^{F^{\prime}} \\ 
	\End_{kQ}((M/M^{1})_{0}) \ar[r]^{R}_{\sim} & \End_{kQ^{^{\prime}}}(\mathbb{G}(M/M^{1})_{0}) \ar[r]^/0.2cm/{\alpha_{0}}_/0.2cm/{\sim} & \End_{kQ^{^{\prime}}}(M_{0}^{\prime}),}
	\end{align}
where $G_{>1}=\mathbb{G}_{M/M^{1},M/M^{1}}$, $\overline{F}^{\prime}=\mathbb{F}^{\prime}_{\mathbb{G}(M/M^{1}),\mathbb{G}(M/M^{1})}$, and
$R$ is defined by $R(f)=(\phi_{{M/M^{1}}})^{-1} \circ R_{u_{1}}^{+}(f)\circ \phi_{{M/M^{1}}}$. 
\end{lemma}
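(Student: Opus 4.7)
The plan is to verify the commutativity of each of the two squares in diagram (\ref{comdiagram1}) separately. The right-hand square is a purely formal identity: for any degree zero morphism $g \in \End_{\Pi^{\prime}}^{\mathbb{Z}}(\mathbb{G}(M/M^{1}))$, the endomorphism $\alpha(g) = \psi \circ g \circ \psi^{-1}$ of $M^{\prime}$ has degree zero part $\psi_{0} \circ g|_{0} \circ \psi_{0}^{-1}$, because $\psi_{0} = \psi|_{\mathbb{G}(M/M^{1})_{0}}$ by construction and restriction to the degree zero part is multiplicative. This is exactly $\alpha_{0}(\overline{F}^{\prime}(g))$, so $F^{\prime} \circ \alpha = \alpha_{0} \circ \overline{F}^{\prime}$ with no further input required.

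The left-hand square is the substantive one, and I expect this to be where the main work lies. The key observation is that the square is essentially the naturality square of the morphism of functors $\phi : \mathbb{F}^{\prime} \circ \mathbb{G} \to R_{u_{1}}^{+} \circ \mathbb{F}$ constructed in Lemma \ref{G0R0}. Before invoking naturality I must first check that the composite defining $R$ is well-posed, which requires $\phi_{M/M^{1}}$ to be an isomorphism of $kQ^{\prime}$-modules. This follows because $M = M_{\leq 0}$ by Proposition \ref{Mleq}, so $M/M^{1} \in \mod^{\leq 0}\Pi$, and then Lemma \ref{G0R0}(b) supplies the required invertibility. For any $f \in \End_{\Pi}^{\mathbb{Z}}(M/M^{1})$, naturality of $\phi$ at $f$ gives
\[ R_{u_{1}}^{+}(\mathbb{F}(f)) \circ \phi_{M/M^{1}} = \phi_{M/M^{1}} \circ \mathbb{F}^{\prime}(\mathbb{G}(f)), \]
and composing with $\phi_{M/M^{1}}^{-1}$ on the left rewrites this as $R(F_{>1}(f)) = \overline{F}^{\prime}(G_{>1}(f))$, since $F_{>1}(f) = \mathbb{F}(f)|_{(M/M^{1})_{0}}$ and $G_{>1}(f) = \mathbb{G}(f)$ by definition of these functors.

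The main obstacle, such as it is, is purely bookkeeping: tracking the canonical identification of $(\mathbb{F}^{\prime} \circ \mathbb{G})(M/M^{1})$ with $\mathbb{G}(M/M^{1})_{0}$ as a $kQ^{\prime}$-module, and verifying that the degree zero restriction of $\psi$ really equals the map $\psi_{0}$ that appears in the definition of $\alpha_{0}$. Once these identifications are unwound, the whole argument is formal and uses no new input beyond Lemma \ref{G0R0} and the construction of $\psi$ from Proposition \ref{Gto}.
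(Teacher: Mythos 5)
Your proposal is correct and follows essentially the same route as the paper, whose proof simply notes that the left square commutes by the functoriality of $\phi$ from Lemma \ref{G0R0} and that the right square is clear. Your extra checks (that $\phi_{M/M^{1}}$ is invertible because $M/M^{1}\in\mod^{\leq 0}\Pi$ via Proposition \ref{Mleq} and Lemma \ref{G0R0}(b), and that restriction to degree zero is compatible with conjugation by $\psi$) are just explicit versions of what the paper leaves implicit.
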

\begin{proof}
The commutativity of the left square comes from the functoriality of $\phi$ of Lemma \ref{G0R0}.
The commutativity of the right square is clear.
\end{proof}
\begin{proposition}\label{propsur}
Assume that $w=s_{u_1}s_{u_2}\cdots s_{u_l}=c^{(0)}c^{(1)}\cdots c^{(m)}$ is a $c$-sortable element of $W_{Q}$.
Then we have
\begin{itemize}
\item[(a)]
The morphism $F : \End_{\Pi_w}^{\mathbb{Z}}(M) \to \End_{kQ}(M_0),\,f \mapsto f|_{M_0}$ is surjective. 

\item[(b)]
The morphism $\underline{F} : \underline{\End}_{\Pi_w}^{\mathbb{Z}}(M) \to \End_{kQ}(M_0)/[T]$ is surjective.
\end{itemize}
\end{proposition}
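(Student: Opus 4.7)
The plan is to prove Proposition \ref{propsur} by induction on $l=\ell(w)$, with the trivial base case $l=0$. Two preliminary observations will be used repeatedly: first, the admissibility assumption on $c$ implies that $u_1$ is a source of the full subquiver $Q^{(1)}$ of $Q$ on $\supp(w)$, so $M^1=(\Pi/I_{u_1})e_{u_1}$ is the simple $\Pi_w$-module $S_{u_1}$ concentrated in degree zero; second, $M$ decomposes as $M=M^1\oplus M^{>1}$ with $M^{>1}:=\bigoplus_{j\geq 2}M^j$, giving a block decomposition of $\End_{\Pi_w}^{\mathbb{Z}}(M)$ and of $\End_{kQ}(M_0)$ that is respected by $F$. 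I will treat the four blocks separately.

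For the $(M^{>1},M^{>1})$-block, Lemma \ref{sur} makes $w'=s_{u_2}\cdots s_{u_l}$ an $(s_{u_1}cs_{u_1})$-sortable element of $W_{Q'}$ with $Q'=\mu_{u_1}(Q)$ and of length $l-1$, so the inductive hypothesis gives surjectivity of $F'$. Feeding this into the commutative diagram (\ref{comdiagram1}) of Lemma \ref{diagram}, I will observe that $\alpha$ and $\alpha_0$ are isomorphisms by construction, $G_{>1}$ is surjective by Lemma \ref{Gcor}(b), and $R$ is an isomorphism because $M/M^1=(M/M^1)_{\leq 0}$ by Proposition \ref{Mleq} so that $\phi_{M/M^1}$ is an isomorphism by Lemma \ref{G0R0}(b). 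A direct diagram chase then yields surjectivity of $F_{>1}$.

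For the blocks touching $M^1=S_{u_1}$, I expect the restriction of $F$ to in fact be a bijection, verified by direct inspection. A degree-zero $\Pi_w$-linear map $S_{u_1}\to M^j$ is determined by its image $x\in e_{u_1}(M^j)_0$ satisfying $r\cdot x=0$ for every $r\in\rad\Pi_w$; since $u_1$ is a source of $Q^{(1)}$, no $*$-arrow of $\overline{Q^{(1)}}$ starts at $u_1$, so the positive-degree elements of $\rad\Pi_w$ annihilate $e_{u_1}(M^j)_0$ automatically, and the remaining constraints are precisely annihilation by the out-arrows of $u_1$ in $Q^{(1)}$, which is the $kQ^{(1)}$-linearity condition. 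Dually, any degree-zero $\Pi_w$-linear map $M^i\to S_{u_1}$ factors through $(M^i)_0$ by grading, and positive-degree elements of $\Pi_w$ act trivially since they must land in $(S_{u_1})_{\geq 1}=0$, so $\Pi_w$-linearity reduces to $kQ^{(1)}$-linearity. The $(M^1,M^1)$-block is manifestly the identity $k\to k$. Putting the blocks together proves (a); part (b) then follows formally, since by (a) the composite $\End_{\Pi_w}^{\mathbb{Z}}(M)\to\End_{kQ}(M_0)/[T]$ is surjective, and by Proposition \ref{induced} it factors through $\underline{F}$. The main delicate point I anticipate is the analysis of the $M^1$-blocks, where the source property of $u_1$ is what forces the positive-degree contributions to vanish and collapses $\Pi_w$-linearity to $kQ^{(1)}$-linearity.
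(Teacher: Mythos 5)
Your core argument is the paper's: induction on $l$ through Lemma \ref{sur}, the commutative diagram (\ref{comdiagram1}), surjectivity of $G_{>1}$ (Theorem \ref{endred}/Lemma \ref{Gcor}), and invertibility of $\alpha$, $\alpha_0$, $R$ give surjectivity on the $(M^{>1},M^{>1})$ block exactly as in the paper, and your deduction of (b) from (a) via Proposition \ref{induced} is also the paper's. You differ only in the $M^1$-blocks: the paper decomposes the source only, notes that any $kQ$-map $M^1_0\to M_0$ is automatically a graded $\Pi_w$-map (using just $M^1=M^1_0$ and $M=M_{\leq 0}$ from Proposition \ref{Mleq} -- no source property or simplicity of $M^1$ is needed there), and disposes of the remaining mixed component by citing \cite[Corollary 3.10]{AIRT}, which gives $\Hom_{kQ}(M^i_0,M^1_0)=0$ for $i\geq 2$, so that the $\Hom_{\Pi_w}^{\mathbb{Z}}(M/M^1,M)$ component reduces to $F_{>1}$. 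Your $2\times 2$ block analysis avoids that citation, which is a legitimate variation, but it is also where your write-up has a gap.

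The gap is in the $(M^{>1},M^1)$ block. Extending a $kQ$-linear $h:(M^i)_0\to S_{u_1}$ by zero, the compatibility $g(rx)=rg(x)$ is clear when $\deg r=0$, or when $\deg r\geq 1$ and $x\in (M^i)_0$ (both sides vanish, which is all your phrase ``positive-degree elements land in $(S_{u_1})_{\geq 1}=0$'' covers). The case you do not address is $\deg r\geq 1$, $\deg x<0$ with $\deg r+\deg x=0$: then $rx\in(M^i)_0$ may be nonzero while $rg(x)=0$, so you must show $h(rx)=0$; note also that $(M^i)_{<0}$ is not a submodule, so the map does not literally factor through $(M^i)_0$. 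This is precisely where the source property must be used: since no degree-one arrow of $\overline{Q^{(1)}}$ has $u_1$ as its source, every positive-degree element of $e_{u_1}\Pi_w$ is a combination of paths beginning with an ordinary (degree-zero) arrow out of $u_1$, hence $e_{u_1}(rx)\in(\rad kQ)\,(M^i)_0$ is killed by any $kQ$-map to $S_{u_1}$ -- or one can simply quote \cite[Corollary 3.10]{AIRT} to see this target block is zero, as the paper does. (Curiously, your invocation of the source property sits in the other block, where the paper's degree argument already suffices.) Two smaller points: the isomorphy of $\phi_{M/M^1}$ only makes $R$ well defined; its bijectivity also needs $R^+_{u_1}=\Hom_{kQ}(U_0,-)$ to be fully faithful on $\add (M/M^1)_0$, i.e.\ that $(M/M^1)_0$ has no summand $M^1_0$ (again \cite[Corollary 3.10]{AIRT}; the paper is equally terse in Lemma \ref{diagram}); and since Proposition \ref{Gto} and Theorem \ref{endred} require $l\geq 2$, the case $l=1$ must be covered by your direct block analysis rather than the inductive step, which it is, but say so.
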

\begin{proof}
(\rm a) We show the assertion by induction on $l$.
Assume that $l=1$.
Then we have $M=M^{1}=M^{1}_{0}$ and $\Pi_{w}=kQ$.
Thus we have $\End_{\Pi_w}^{\mathbb{Z}}(M) = \End_{kQ}(M_0)$.
The assertion holds.
Assume that $l\geq 2$.
We show that two maps
\begin{align*}
F_{1}:=\mathbb{F}_{M^{1},M}: \Hom_{\Pi_{w}}^{\mathbb{Z}}(M^{1},M) \to \Hom_{kQ}(M^{1}_{0},M_{0}),\\
\mathbb{F}_{M/M^{1},M}:\Hom_{\Pi_{w}}^{\mathbb{Z}}(M/M^{1},M) \to \Hom_{kQ}\left( (M/M^{1})_{0},M_{0} \right)
\end{align*}
are surjective.
Since $M^{1}=M^{1}_{0}$, $M$ is in $\mod^{\leq 0}\Pi_{w}$, and $\Pi_{w}$ is positively graded, we can regard any $g\in\Hom_{kQ}(M^{1}_{0},M_{0})$ as a morphism in $\mod^{\mathbb{Z}}\Pi_{w}$.
Therefore, $F_{1}$ is surjective.

By \cite[Corollary 3.10]{AIRT}, we have $\Hom_{kQ}(M^{j}_{0}, M^{i}_{0})=0$ for $i<j$.
Thus we have $\Hom_{kQ}\left( (M/M^{1})_{0},M_{0} \right)=\End_{kQ}\left( (M/M^{1})_{0} \right)$.
Therefore it is enough to show that the map \[F_{>1}:=\mathbb{F}_{M/M^{1},M/M^{1}}:\End_{\Pi_{w}}^{\mathbb{Z}}(M/M^{1}) \to \End_{kQ}\left( (M/M^{1})_{0} \right)\] is surjective.
We show that $F_{> 1}$ is surjective by using the diagram (\ref{comdiagram1}).
Let $w^{\prime}=s_{u_2}\cdots s_{u_l}$.
By Lemma \ref{sur} (c), $w^{\prime}$ is a $(s_{u_1}cs_{u_1})$-sortable element in $W_{Q^{\prime}}$.
Thus, by the inductive hypothesis, $F^{\prime}$ in the diagram (\ref{comdiagram1}) is surjective.
By Theorem \ref{endred}, $G_{>1}$ is surjective.
Since $\alpha$, $\alpha_{0}$, and $R$ are isomorphism, $F_{>1}$ is surjective.

(\rm b) We have the following commutative diagram 
	\begin{align}\label{FF}
	\xymatrix{
	\End_{\Pi_{w}}^{\mathbb{Z}}(M) \ar[r]^{\pi} \ar[d]^{F} & \underline{\End}_{\Pi_{w}}^{\mathbb{Z}}(M) \ar[d]^{\underline{F}}\\ 
	 \End_{kQ}(M_{0}) \ar[r]^/-0.2cm/{\pi^{\prime}} & \End_{kQ}(M_{0})/[T].
	}
	\end{align}
Since the bottom and the left morphisms are surjective, the right morphism is surjective.
\end{proof}
\subsection{\underline{F}\,is injective}\label{uF-inj}
We show that the morphism $\underline{F}$ is injective.
Let $w=s_{u_{1}}s_{u_{2}}\cdots s_{u_{l}}$ be a $c$-sortable element and $w^{\prime}=s_{u_{2}}\cdots s_{u_{l}}$.
Without loss of generality by Lemma \ref{airtlem2.1}, we assume that $Q_{0}=\supp(w)$.
Since $\mathbb{G}(M^{1})=0$ and by Lemma \ref{diagram}, we have the following commutative diagram:
	\begin{align}\label{comdiagram2}
	\xymatrix{\End_{\Pi_{w}}^{\mathbb{Z}}(M) \ar[r]^{G} \ar[d]^{F} & \End_{\Pi_{w^{\prime}}^{\prime}}^{\mathbb{Z}}(M^{\prime}) \ar[d]^{F^{\prime}} \\ 
	\End_{kQ}(M_{0}) \ar[r]^{\overline{R}} & \End_{kQ^{\prime}}(M_{0}^{\prime}),}
	\end{align}
where $\overline{R}=\alpha_{0}\circ R$.
\begin{lemma}\label{f1f2f3}
Let $f\in\End_{\Pi_{w}}^{\mathbb{Z}}(M)$.
Assume that $G(f)$ factors through $\add P^{\prime}$.
Then we have
\begin{itemize}
\item[(a)] $f$ factors through $\add(P\oplus M^{1})$.
\item[(b)] If $F(f)=0$, then $f$ factors through $\add(P)$.
\end{itemize}
\end{lemma}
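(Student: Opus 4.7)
The plan is to reduce both parts to Theorem \ref{endred}, which identifies $\Ker G$ with the ideal $[M^1(i)\mid 0\leq i\leq p_{u_1}]$ in $\End_{\Pi_w}^{\mathbb{Z}}(M)$, and then to absorb this ideal into $[P\oplus M^1]$ by exploiting the source property of $u_1$ together with the structural features of $M^{p_{u_1}}=\Pi_we_{u_1}(m_{p_{u_1}})$.

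For (a), I first lift the given factorisation of $G(f)$. Write $G(f)=h'\circ g'$ with intermediate $Z\in\add P'$. Since $\mathbb{G}(M^{p_u})\simeq M^{\prime p_u^{\prime}}$ for every $u\in\supp(w')$ by Proposition \ref{Gto}(b), I pick $Y\in\add P$ with $\mathbb{G}(Y)\simeq Z$. The diagram chase behind Lemma \ref{Gcor}(b) applies verbatim to the summands of $M$ and $Y$, giving the surjectivity of $\mathbb{G}_{M,Y}$ and $\mathbb{G}_{Y,M}$; I thus lift $g'$ and $h'$ to $\tilde g\colon M\to Y$ and $\tilde h\colon Y\to M$ with $G(\tilde h\circ\tilde g)=G(f)$. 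Setting $r:=f-\tilde h\circ\tilde g\in\Ker G$, Theorem \ref{endred} places $r$ in $[M^1(i)\mid 0\leq i\leq p_{u_1}]$. The absorption step then analyses a non-zero composition $M^j\xto{g}M^1(i)\xto{h}M^k$: such a composition forces $u_j=u_1$ and $m_j=i$, with $g$ the canonical surjection onto $\top M^j=S_{u_1}(-i)$, and $h$ an embedding of $S_{u_1}$ into the socle of $M^k$ at degree $-i$ (automatic because $\Pi_{\geq 1}e_{u_1}=0$ at the source $u_1$). Using that $M^{p_{u_1}}\in\add P$ is the graded indecomposable projective--injective at vertex $u_1$ in $\Sub^{\mathbb{Z}}\Pi_w$, with $M^1=S_{u_1}$ embedded as a summand of $L_w^{p_{u_1}}=(M^{p_{u_1}})_0$ in its socle at degree zero (Lemma \ref{c}), I then verify, via the graded quiver presentation of $\End_{\Pi_w}(M)\simeq kQ_w/I$ in Theorem \ref{quiver} together with the grading of Definition \ref{gradeofquiver}, that every such composition coincides in $\End_{\Pi_w}^{\mathbb{Z}}(M)$ with one factoring through $M^{p_{u_1}}$ (when $i>0$) or through $M^1$ (when $i=0$). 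This combinatorial absorption is the main technical hurdle.

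For (b), the conclusion of (a) gives $f=\alpha_Y\beta_Y+\alpha_1\beta_1$ where $\alpha_Y\beta_Y$ factors through $Y\in\add P$ and $\alpha_1,\beta_1$ go through $(M^1)^n$. Since $M=M_{\leq 0}$ by Proposition \ref{Mleq} and $(M^1)^n$ lives entirely in degree zero, the map $\beta_1\colon M\to(M^1)^n$ factors through the direct summand projection $M\twoheadrightarrow M^1$ (because $M^1$ is the unique summand of $M$ whose top is $S_{u_1}$ at degree zero), and $\alpha_1\colon(M^1)^n\to M$ factors through the submodule inclusion $M_0\hookrightarrow M$ (since $\Pi_{\geq 1}\cdot M_0\subseteq M_{\geq 1}=0$). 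Consequently $\alpha_1\beta_1$ is uniquely determined by a single element $m\in\Hom_{kQ^{(1)}}(S_{u_1},M_0)=\{x\in e_{u_1}M_0\mid\alpha x=0\text{ for all }\alpha\in Q_1^{(1)}\}$. The hypothesis $F(f)=0$ combined with $F(\alpha_Y\beta_Y)\in[T]$ (which holds as $Y_0\in\add T$) forces $F(\alpha_1\beta_1)\in[T]$, which in turn expresses $m$ as the image of an element in the socle summand $L_w^{p_{u_1}}\subseteq(M^{p_{u_1}})_0$ under a $kQ^{(1)}$-map from $\add T$. Lifting this $T$-factorisation to a graded $\Pi_w$-factorisation through $M^{p_{u_1}}\in\add P$ (using the projective--injective property of $M^{p_{u_1}}$ in $\Sub^{\mathbb{Z}}\Pi_w$) places $\alpha_1\beta_1\in[P]$, and therefore $f\in[P]$.
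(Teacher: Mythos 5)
Your first half of (a) is exactly the paper's route: lift the factorisation of $G(f)$ through $\add P'$ using $\mathbb{G}(P)\simeq P'$ and the surjectivity in Lemma \ref{Gcor}, and identify the error term $r=f-\tilde h\tilde g$ with an element of $[M^{1}(i)\mid 0\leq i\leq p_{u_1}]$ via Theorem \ref{endred}. But the step you yourself call ``the main technical hurdle'' is where the argument breaks down: you never prove the absorption, you only assert that a composition $M^{j}\to M^{1}(i)\to M^{k}$ with $i>0$ ``coincides with one factoring through $M^{p_{u_1}}$''. That stronger claim is not justified (Lemma \ref{arrownega} only forces a negative-degree path to pass through \emph{some} vertex $p_u$, not through $p_{u_1}$), and your supporting statement that $M^{1}=S_{u_1}$ is a \emph{summand} of $L_w^{p_{u_1}}=(M^{p_{u_1}})_0$ is false in general: in the paper's running example $L_w^{6}$ has Loewy series $3$ over $1$, so $S_1$ is only a socle constituent, not a direct summand. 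The point you are missing is that no quiver combinatorics is needed here: a component $M^{1}(i)\to M$ with $i\geq 1$ is a morphism of negative degree in $\End_{\Pi_w}(M)$, so Lemma \ref{facthr} already says it factors through $\add P$; splitting $r$ into its $i=0$ part and its $i\geq 1$ part then finishes (a) in one line, which is the paper's argument.

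Part (b) has a more serious gap. Your reductions of $\beta_1$ (it kills $M/M^{1}$, since $M^{1}$ is the only summand with top $S_{u_1}$ in degree zero) and of $\alpha_1$ (its image lies in the submodule $M_0$) are fine and parallel the paper. The problem is the final lifting: you claim the $kQ$-factorisation of $F(\alpha_1\beta_1)$ through $\add T$ can be lifted to a graded factorisation through $\add P$ ``using the projective--injective property of $M^{p_{u_1}}$ in $\Sub^{\mathbb{Z}}\Pi_w$''. Projectivity or injectivity of $M^{p_u}$ gives no mechanism for extending a $kQ$-morphism $T'\to M_0$ between degree-zero parts to a graded $\Pi_w$-morphism $P''\to M$; the surjectivity of the relevant Hom-components, i.e.\ of $F$, is precisely the nontrivial input here, proved in Proposition \ref{propsur}(a) by induction via the reflection functors, and the paper's proof of (b) uses it (together with $\Hom_{\Pi_w}^{\mathbb{Z}}(M^{1},M)\simeq\Hom_{kQ}(M^{1}_0,M_0)$, which forces $g=h$). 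Without invoking Proposition \ref{propsur}(a) (or reproving its content), your lifting step is unsupported; and, as in (a), restricting the lift to $M^{p_{u_1}}$ alone is unjustified, since the $\add T$-factorisation may pass through the other summands $L_w^{p_u}$ as well.
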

\begin{proof}
(\rm a)
By Proposition \ref{Gto} (d), we have $\mathbb{G}(P)=P^{\prime}$.
Since $G(f)$ factors through $\add P^{\prime}$ and by Theorem \ref{endred} and Lemma \ref{Gcor}, there exist morphisms $f_{1}, g\in\End_{\Pi_{w}}^{\mathbb{Z}}(M)$ such that $f=f_{1}+g$, $f_{1}$ factors through $\add P$, and $g$ factors through $\add\{M^{1}(i)\mid i\geq 0\}$.
Thus $g$ is the sum of morphisms $g_{1}, g_{2}\in\End_{\Pi_{w}}^{\mathbb{Z}}(M)$ such that $g_{1}$ factors through $\add M^{1}$ and $g_{2}$ factors through $\add\{M^{1}(i)\mid i\geq 1\}$.
By Lemma \ref{facthr}, $g_{2}$ factors through $\add P$.

(\rm b)
By (a), there exists $g\in\End_{\Pi_{w}}^{\mathbb{Z}}(M)$ such that $g$ factors through $\add M^{1}$ and $f-g$ factors through $\add P$.
We show that $g$ factors through $\add P$.
Since $\Hom_{\Pi_{w}}^{\mathbb{Z}}(M/M^{1},M^{1})=0$, we have $g|_{M/M^{1}}=0$.
Therefore we may regard $g$ as a morphism from $M^{1}$ to $M$.
Since $F(f)=0$, $F(g-f)=F(g) : M^{1}_{0} \to M_{0}$ factors through $\add P_{0}$.
By Proposition \ref{propsur} (a), there exists $h\in\Hom_{\Pi_{w}}^{\mathbb{Z}}(M^{1},M)$ such that $h$ factors through $\add P$ and $F(g)=F(h)$.
Because $M^{1}=M^{1}_{0}$, we have $g=h$.
\end{proof}
\begin{proposition}\label{prop-uF-inj}
The morphism $\underline{F} : \underline{\End}_{\Pi_w}^{\mathbb{Z}}(M) \to \End_{kQ}(M_0)/[T]$ is injective.
\end{proposition}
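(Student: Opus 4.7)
My plan is to establish injectivity by induction on $l$, in parallel with the surjectivity argument of Proposition \ref{propsur}. The base case $l=1$ is trivial, because then $\Pi_w=kQ$, $M=M_0=T$, and both $\underline{\End}_{\Pi_w}^{\mathbb{Z}}(M)$ and $\End_{kQ}(M_0)/[T]$ vanish.

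The key structural input I will use is the observation, already recorded after \eqref{m}, that $P=\bigoplus_{u\in\supp(w)}M^{p_u}$ lies in $\proj^{\mathbb{Z}}\Pi_w$. Hence it suffices to show that any $f\in\End_{\Pi_w}^{\mathbb{Z}}(M)$ with $F(f)\in[T]$ factors through $\add P$. Given such an $f$, I write $F(f)=\sum_i b_i\circ a_i$ with $a_i:M_0\to T$ and $b_i:T\to M_0$. Because $P$ is a direct summand of $M$ and $T=P_0$, the surjectivity of $F=\mathbb{F}_{M,M}$ from Proposition \ref{propsur}(a) restricts to surjectivity of $\mathbb{F}_{M,P}$ and $\mathbb{F}_{P,M}$, so I can lift each $a_i,b_i$ to morphisms $\tilde a_i:M\to P$ and $\tilde b_i:P\to M$ in $\mod^{\mathbb{Z}}\Pi_w$. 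Setting $f^{\prime}:=f-\sum_i \tilde b_i\tilde a_i$, we obtain $F(f^{\prime})=0$ while the subtracted term tautologically factors through $\add P$; so the problem reduces to showing that $f^{\prime}$ itself factors through $\add P$.

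For this I push $f^{\prime}$ through $G$. By the commutative diagram \eqref{comdiagram2}, $F^{\prime}(G(f^{\prime}))=\overline{R}(F(f^{\prime}))=0$, which certainly lies in $[T^{\prime}]$. By Lemma \ref{sur}, $w^{\prime}=s_{u_2}\cdots s_{u_l}$ is $(s_{u_1}cs_{u_1})$-sortable of length $l-1$, so the inductive hypothesis gives injectivity of $\underline{F^{\prime}}$; thus $G(f^{\prime})$ becomes zero in $\underline{\End}_{\Pi^{\prime}_{w^{\prime}}}^{\mathbb{Z}}(M^{\prime})$, i.e.\ it factors through some graded projective of $\Pi^{\prime}_{w^{\prime}}$. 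A mild variant of the argument in Proposition \ref{induced} (splitting the factorization according to the degree shifts and using the analogue of Lemma \ref{facthr} for $w^{\prime}$, which is $(s_{u_1}cs_{u_1})$-sortable) lets me arrange the factorization through $\add P^{\prime}$. Lemma \ref{f1f2f3}(b), combined with $F(f^{\prime})=0$, then delivers that $f^{\prime}$ factors through $\add P$, closing the induction.

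I expect the only real obstacle to be the reduction from "factoring through an arbitrary graded projective of $\Pi^{\prime}_{w^{\prime}}$" to "factoring through $\add P^{\prime}$"—the precise form required by Lemma \ref{f1f2f3}(b); everything else is a straightforward lifting and diagram chase using the machinery of Section \ref{welldef} and Subsection \ref{subendalg}.
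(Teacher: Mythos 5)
Your argument is essentially the paper's own proof: induction on $l$, subtracting a correction term factoring through $\add P$ so that $F(f')=0$ (your explicit lifting of the $a_i,b_i$ is just an unwinding of the paper's appeal to Proposition \ref{propsur}(a) together with $\mathbb{F}(P)=T$), then applying $G$, the commutative diagram, the inductive hypothesis for the $(s_{u_1}cs_{u_1})$-sortable word $w'$, the argument of Proposition \ref{induced} (with Lemma \ref{facthr} for $w'$) to land in $\add P'$, and finally Lemma \ref{f1f2f3}(b). The step you flag as the "only real obstacle" is handled in the paper by exactly the route you sketch, namely citing the proof of Proposition \ref{induced} for $w'$, so your proposal is correct and matches the paper.
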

\begin{proof}
We show the assertion by induction on $l$.
If $l=1$, then we have $\underline{\End}_{\Pi_w}^{\mathbb{Z}}(M) = \End_{kQ}(M_0)/[T]=0$.
Thus the claim is clear.

Assume that $l\geq 2$.
Let $f$ be a morphism in $\End_{\Pi_{w}}^{\mathbb{Z}}(M)$ satisfying $\underline{F}(\pi(f))=0$.
We show $\pi(f)=0$.
By the commutative diagram (\ref{FF}), we have $\pi^{\prime}(F(f))=0$.
Since $\Ker \pi^{\prime}=[T]$, $F(f)$ factors through $\add T$.
By Proposition \ref{propsur} (a) and $\mathbb{F}(P)=T$, there exists $g\in\End_{\Pi_{w}}^{\mathbb{Z}}(M)$ such that $g$ factors through $\add P$ and $F(f)=F(g)$.
Put $h:=f-g\in\End_{\Pi_{w}}^{\mathbb{Z}}(M)$.
We have $\pi(f)=\pi(h)$.
Therefore it is enough to show $\pi(h)=0$.

Consider the following commutative diagram
	\begin{align*}
	\xymatrix{\End_{\Pi_{w}}^{\mathbb{Z}}(M) \ar[r]^{G} \ar[d]^{F}&
	\End_{\Pi^{\prime}_{w^{\prime}}}^{\mathbb{Z}}(M^{\prime}) \ar[r]^{\eta} \ar[d]^{F^{\prime}} & \underline{\End}_{\Pi_{w^{\prime}}^{\prime}}^{\mathbb{Z}}(M^{\prime}) \ar[d]^{\underline{F^{\prime}}}\\ 
	  \End_{kQ}(M_{0}) \ar[r]^{\overline{R}} &  \End_{kQ^{\prime}}(M^{\prime}_{0}) \ar[r]^/-0.4cm/{\eta^{\prime}} & \End_{kQ^{\prime}}(M_{0}^{\prime})/[T^{\prime}],
	}
	\end{align*}
where $\eta$ and $\eta^{\prime}$ are canonical surjections.
We have \[\underline{F^{\prime}}(\eta(G(h)))=\eta^{\prime}(F^{\prime}(G(h)))=\eta^{\prime}(R(F(h)))=0,\] since $F(h)=F(f)-F(g)=0$.
By the inductive hypothesis, $\underline{F^{\prime}}$ is injective.
Thus $\eta(G(h))=0$ and $G(h)$ factors through a graded projective $\Pi^{\prime}_{w^{\prime}}$-module.
By the proof of Proposition \ref{induced}, $G(h)$ factors through $\add P^{\prime}$.
Thus, by Lemma \ref{f1f2f3} (b), $h$ factors through $\add P$.
Therefore, we have $\pi(f)=\pi(h)=0$.
\end{proof}
\section{The global dimension of the endomorphism algebra}\label{gldimofend}
Throughout this section, let $A$ be a finite dimensional algebra and $T$ a {\it cotilting} $A$-module of finite injective dimension, that is, $T$ satisfies $\injdim T<\infty$, $\Ext_{A}^{i}(T,T)=0$ for any $i>0$, and there exists an exact sequence $0\to T_{r}\to \cdots\to T_{1}\to T_{0}\to\kD A\to0$ where $T_{i}\in\add T$.
We denote by ${}^{\perp_{>0}}T$ the full subcategory consisting of $\mod A$ of modules $X$ satisfying $\Ext_{A}^{i}(X,T)=0$ for any $i>0$.
The aim of this section is to show the following theorem.
\begin{theorem}\label{end2}
Assume that the global dimension of $A$ is at most $n$ and that ${}^{\perp_{>0}}T$ has an additive generator $M$.
Then the global dimension of $\End_{A}(M)/[T]$ is at most $3n-1$.
\end{theorem}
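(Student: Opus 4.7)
To prove $\gl(\bar\Lambda) \leq 3n-1$ for $\bar\Lambda := \End_A(M)/[T]$, I plan to bound $\operatorname{pd}_{\bar\Lambda}(S) \leq 3n-1$ for every simple $\bar\Lambda$-module $S$. Since $\bar\Lambda$ is a quotient of $\Lambda := \End_A(M)$, the simple $\bar\Lambda$-modules correspond exactly to the indecomposable summands $X$ of $M$ not lying in $\add T$, with projective cover $\bar F(X) := \Hom_A(M,X)/[T]$, where $\bar F = \Hom_A(M,-)/[T]$.

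Two structural facts will drive the argument. First, every projective $A$-module $P$ satisfies $\Ext_A^{>0}(P,T)=0$ and so belongs to ${}^{\perp_{>0}}T = \add M$; combined with $\gl A \leq n$, this gives an $\add M$-resolution of any $X \in \mod A$ of length $\leq n$, and applying $\Hom_A(M,-)$ (exact on such a resolution, as each syzygy remains in ${}^{\perp_{>0}}T$) shows $\operatorname{pd}_\Lambda \Hom_A(M,X) \leq n$ for every $X \in \mod A$. Second, because $T$ is cotilting with $\injdim T \leq n$, every $X \in \add M$ admits an $\add T$-coresolution of length $\leq n$.

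The argument then proceeds in two phases. In phase one, I build a $\Lambda$-projective resolution of a simple $S_X$ by starting from $\Hom_A(M,X) \twoheadrightarrow S_X$ and iteratively resolving syzygies via right $\add M$-approximations; by the first structural fact, applied to modules constructed from $X$ and its right minimal $\add T$-approximation, this resolution has length $\leq n+1$ with every syzygy of the form $\Hom_A(M,Z_i)$ for some $Z_i$ in $\mod A$. In phase two, I convert to a $\bar\Lambda$-resolution by replacing each $\Lambda$-projective $\Hom_A(M,Z_i)$ whose argument has $\add T$-summands by a $\bar\Lambda$-projective complex obtained from the $\add T$-coresolution of the second structural fact (alternating $\add T$-coresolutions with $\add M$-resolutions); each such replacement contributes at most $2(n-1)$ further terms, giving the total bound $(n+1)+2(n-1) = 3n-1$.

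The main obstacle will be the bookkeeping in phase two: the replacements must be performed simultaneously across the whole resolution so that no intermediate syzygy accumulates an unresolved $\add T$-summand, and I will need to verify that the alternation of $\add M$- and $\add T$-approximations really stabilises after $n-1$ rounds. The sharp constant $3n-1$, rather than a weaker bound such as $3n+1$, will rely on exploiting that the first step of the $\add T$-coresolution of $X$ overlaps with the initial $\Lambda$-projective cover of $S_X$, eliminating one redundant term.
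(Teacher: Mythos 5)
Your reduction to simple $\bar\Lambda$-modules is legitimate, but both pillars of the argument have genuine gaps. First, the justification of your ``first structural fact'' does not work: applying $\Hom_A(M,-)$ to a projective resolution of $X$ is not exact merely because the terms and syzygies lie in ${}^{\perp_{>0}}T$. That condition controls $\Ext_A^{>0}(-,T)$, i.e.\ the contravariant direction, whereas exactness of the covariant functor $\Hom_A(M,-)$ on $0\to\Omega^{i+1}X\to P_i\to\Omega^{i}X\to 0$ requires every map $M\to\Omega^{i}X$ to lift along $P_i\to\Omega^{i}X$, which is unrelated to $\Ext^{>0}(\Omega^{i+1}X,T)=0$; moreover, for arbitrary $X\in\mod A$ the syzygies need not lie in ${}^{\perp_{>0}}T$ at all (the isomorphism $\Ext^i(\Omega X,T)\simeq\Ext^{i+1}(X,T)$ only helps when $X$ itself is in ${}^{\perp_{>0}}T$). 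The conclusion $\operatorname{pd}_{\Lambda}\Hom_A(M,X)\leq n$ is true, but it needs a different argument: resolve $X$ by surjective right $\add M$-approximations (these are exact under $\Hom_A(M,-)$ by the approximation property, and they are surjective because $A\in\add M$), and use $\injdim T\leq n$ to see that the $(n-1)$-st kernel has $\Ext^{>0}(-,T)=0$, hence lies in ${}^{\perp_{>0}}T=\add M$; this is the covariant counterpart of what the paper proves in Proposition \ref{coraddt}.

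Second, and more seriously, your phase two --- converting a $\Lambda$-projective resolution of $S_X$ into a $\bar\Lambda$-projective resolution --- is precisely the hard point of the theorem, and the proposal offers no mechanism for it. Killing $[T]$ does not carry $\Lambda$-projective resolutions to $\bar\Lambda$-projective ones, and splicing in $\add T$-coresolutions requires proving that the resulting complexes of $\bar\Lambda$-projectives remain exact; that is exactly what the paper's Lemma \ref{exact} and Propositions \ref{longexa}--\ref{corexact} provide, by showing that $\overline{\Hom}_A^T(-,N)$ sends a short exact sequence in ${}^{\perp_{>0}}T$ to a three-term exact sequence (using only $\Ext^1(Z,T)=0$), extending this to a long exact sequence via the cosyzygies $\Omega_T^{-i}$ taken along left $\add T$-approximations, and terminating it with $\Omega_T^{-n}(X)\in\add T$. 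The paper never resolves over $\Lambda$ at all: for an arbitrary $\bar\Lambda$-module $N$ it takes a projective presentation induced by $f\colon X\to Y$, makes $f$ injective by adding a left $\add T$-approximation, and reads off from Proposition \ref{corexact} a $\bar\Lambda$-projective resolution with three terms per cosyzygy level over $n$ levels, i.e.\ length $3n-1$. Your count $(n+1)+2(n-1)=3n-1$, and in particular the claimed cancellation from an ``overlap'' of the $\add T$-coresolution with the projective cover of $S_X$, is asserted without any supporting argument; as written, completing the proposal would require essentially rebuilding the paper's stable long-exact-sequence machinery.
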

Note that $\End_{A}(M)$ and $\End_{A}(M)/[T]$ are relative version of Auslander algebras and stable Auslander algebras.
It is known that Auslander algebras have global dimension at most two \cite{ARS}, and that stable Auslander algebras have global dimension at most $3(\gl A)-1$ \cite[Proposition 10.2]{AR74}.
We apply Theorem \ref{end2} to our endomorphism algebra in Theorem \ref{endalg}.
We denote by $\Sub T$ the full subcategory of $\mod A$ consisting of submodules of finite direct sums of $T$.
\begin{corollary}\label{end2cor}
Under the setting in Theorem \ref{endalg}, the global dimension of $\End_{kQ}(M_0)/[T]$ is at most two.
\end{corollary}
\begin{proof}
Let $Q^{(1)}$ be the full subquiver of $Q$ whose the set of vertices is $\supp(w)$.
We have $\End_{kQ}(M_0)/[T]=\End_{kQ^{(1)}}(M_0)/[T]$.
Moreover, by Theorem \ref{airt}, $T$ is a tilting $kQ^{(1)}$-module.
By \cite[Theorem 3.11]{AIRT}, we have $\Sub{T}=\add \{ M_{0}^{1}, M_{0}^{2}, \ldots, M_{0}^{l} \}$.
By Bongartz's lemma \cite[Chapter VI, 2.4. Lemma]{ASS}, tilting modules over a hereditary algebra coincide with cotilting modules.
Since $kQ^{(1)}$ is hereditary, $\Sub T={}^{\perp_{>0}}T$ holds.
Therefore, by applying Theorem \ref{end2}, the global dimension of $\End_{kQ^{(1)}}(M_0)/[T]$ is at most two.
\end{proof}
To show Theorem \ref{end2}, we use cotilting theory.
We recall some properties of cotilting modules.
\begin{proposition}\cite[Theorem 5.4, Proposition 5.11]{AR91}\label{cotilt}
Let $T$ be a cotilting $A$-module.
Then
\begin{itemize}
\item[(a)] For any $X \in  {}^{\perp_{>0}}T$, there exists an injective left $(\add T)$-approximation of $X$. 
\item[(b)] Let $X\in {}^{\perp_{>0}}T$. Then $X\in \add T$ if and only if $\Ext_{A}^{1}(Y,X)=0$ for any $Y\in{}^{\perp_{>0}}T$.
\end{itemize}
\end{proposition}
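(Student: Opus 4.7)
The plan is to show that every simple $\bar B$-module has projective dimension at most $3n-1$ over $\bar B=B/[T]$, where $B=\End_A(M)$.

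First, I would set up the framework. Since $A$ is a projective $A$-module, $\Ext^i_A(A,T)=0$ for all $i>0$, so $A\in{}^{\perp_{>0}}T$ and hence $A\in\add M$. Consequently $M$ is a generator of $\mod A$, the functor $\Hom_A(M,-)$ restricts to an equivalence ${}^{\perp_{>0}}T\simeq\proj B$, and the simple $\bar B$-modules correspond bijectively to indecomposable summands $m$ of $M$ not lying in $\add T$. Fix such an $m$ and let $S_m$ denote the corresponding simple.

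The bound $3n-1$ mirrors Auslander--Reiten's classical bound on the global dimension of stable Auslander algebras, so the plan is to adapt that construction to the relative setting. I would build a projective $\bar B$-resolution of $S_m$ by interleaving two ingredients. The first is a minimal right $\add M$-approximation $N_i\to K_{i-1}$ in $\mod A$ (beginning with $K_0=\rad_A m$), to which $\Hom_A(M,-)$ is applied to yield the successive terms of a $B$-level resolution of $S_m$ of length at most $n+1$; this iteration terminates within $n+1$ steps because $\gl A\le n$ forces some iterated kernel to be projective in $\mod A$, hence to lie in $\add A\subset\add M$. The second ingredient is an $\add T$-coresolution of each intermediate module $N_i\in{}^{\perp_{>0}}T$: such coresolutions exist by iterating the injective left $\add T$-approximations supplied by Proposition \ref{cotilt}(a), and each has length at most $n$ since $\injdim_A T\le\gl A\le n$. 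Splicing these coresolutions into the $B$-resolution and passing modulo $[T]$ corrects the failure of $-/[T]$ to be exact, producing a $\bar B$-projective resolution of $S_m$.

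The hard part will be the precise splicing and the length bookkeeping. A naive accounting of the two pieces would grow much faster than linearly in $n$; obtaining the sharp bound $3n-1$ requires an efficient interleaving that uses the Ext-vanishing criterion of Proposition \ref{cotilt}(b) to collapse the correction layers. Concretely, I expect that after splicing in the first round of $\add T$-coresolutions, the resulting $\bar B$-syzygies are of a controlled form (lifts of modules of type $\underline{\Hom}_A(M,N'_i)$ for new $N'_i\in{}^{\perp_{>0}}T$), so at most two rounds of splicing are needed, with two length savings at the joins of the three pieces yielding exactly $3n-1$ in place of the naive $3n+1$. Verifying this termination, and in particular using Proposition \ref{cotilt}(b) to ensure that iterated syzygies eventually land in $\add T$ and so vanish in $\mod\bar B$, is the main technical hurdle.
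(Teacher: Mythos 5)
Your proposal does not address the statement at hand. The statement to be proved is Proposition \ref{cotilt}, the two facts from cotilting theory: (a) every $X\in{}^{\perp_{>0}}T$ admits an injective left $(\add T)$-approximation, and (b) $X\in{}^{\perp_{>0}}T$ lies in $\add T$ if and only if $\Ext_A^1(Y,X)=0$ for all $Y\in{}^{\perp_{>0}}T$. What you have sketched instead is a proof of Theorem \ref{end2} (the bound $\gl \End_A(M)/[T]\leq 3n-1$), and your sketch explicitly \emph{invokes} Proposition \ref{cotilt}(a) to build the $\add T$-coresolutions and Proposition \ref{cotilt}(b) to terminate them. So as an argument for the stated proposition it is circular: the very approximations and Ext-criterion you are asked to establish are taken as given.

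For the record, the paper itself offers no proof of Proposition \ref{cotilt}; it is quoted from Auslander--Reiten (Theorem 5.4 and Proposition 5.11 of \emph{Applications of contravariantly finite subcategories}). A genuine proof would have to show that ${}^{\perp_{>0}}T$ is covariantly finite with the approximations being injective (this uses the finite coresolution $0\to T_r\to\cdots\to T_0\to \kD A\to 0$ and the vanishing $\Ext^{>0}_A(T,T)$ to produce, for $X\in{}^{\perp_{>0}}T$, a monomorphism into an object of $\add T$ whose cokernel stays in ${}^{\perp_{>0}}T$), and then identify $\add T$ as exactly the Ext-injective objects of the exact category ${}^{\perp_{>0}}T$. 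None of this appears in your proposal. Separately, even read as a proof of Theorem \ref{end2}, your simple-module/splicing strategy diverges from the paper's argument, which resolves an arbitrary module over $\End_A(M)/[T]$ by presenting it via a morphism $f:X\to Y$ in ${}^{\perp_{>0}}T$, replacing $f$ by the monomorphism $f\oplus g$ with $g$ a left $\add T$-approximation, and reading off the length bound from the long exact sequence of Proposition \ref{corexact}; but that is not the statement you were asked to prove.
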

In the following lemma and proposition, we construct an important long exact sequence.
For $X,Y\in \mod A$, we denote by $\overline{\Hom}_{A}^{T}(X,Y)$ the quotient of $\Hom_{A}(X,Y)$ by the subspace consisting of morphisms factoring through $\add T$, that is, $\overline{\Hom}_{A}^{T}(X,Y)=\Hom_{A}(X,Y)/[T]$.
\begin{lemma}\label{exact}
For an exact sequence $0\to X \xto{f}Y \xto{g}Z\to0$ in ${}^{\perp_{>0}}T$ and any $A$-module $N$,
we have the following exact sequence
\[
\overline{\Hom}_{A}^{T}(Z,M)\xto{-\circ g} \overline{\Hom}_{A}^{T}(Y,M)\xto{-\circ f} \overline{\Hom}_{A}^{T}(X,M).
\]
\end{lemma}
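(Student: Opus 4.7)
The plan is to reduce the statement to the classical long exact sequence obtained by applying $\Hom_A(-,M)$ to the short exact sequence, and then show that modding out by maps factoring through $\add T$ preserves exactness at the middle term. Functoriality immediately gives $(-\circ f)\circ(-\circ g)=-\circ (gf)=0$, so this composition already vanishes before passing to the quotient. The real content is exactness at $\overline{\Hom}_A^T(Y,M)$.

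To prove that, I would start with $h\in\Hom_A(Y,M)$ such that $h\circ f$ factors through $\add T$, say $h\circ f=b\circ a$ with $a:X\to T_0$, $b:T_0\to M$, and $T_0\in\add T$. The key step is to lift $a:X\to T_0$ to some $a':Y\to T_0$ satisfying $a'\circ f=a$. This uses the hypothesis $Z\in{}^{\perp_{>0}}T$: applying $\Hom_A(-,T_0)$ to the short exact sequence yields the piece
\[
\Hom_A(Y,T_0)\longrightarrow\Hom_A(X,T_0)\longrightarrow\Ext_A^1(Z,T_0)=0,
\]
so the required extension $a'$ exists. Then $h-b\circ a':Y\to M$ precomposed with $f$ vanishes, so it factors uniquely as $h'\circ g$ for some $h':Z\to M$. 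Passing to the quotient gives $[h]=[h'\circ g]$ in $\overline{\Hom}_A^T(Y,M)$, which is exactly exactness at the middle term.

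I expect no serious obstacle here; the only point to verify carefully is the vanishing $\Ext_A^1(Z,T_0)=0$, which is immediate from $Z\in{}^{\perp_{>0}}T$ and $T_0\in\add T$. The argument does not need Proposition \ref{cotilt} directly, only the defining property of ${}^{\perp_{>0}}T$ together with the fact that this subcategory contains $\add T$.
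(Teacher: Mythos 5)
Your proposal is correct and follows essentially the same route as the paper's proof: both extend the map $X\to T_0$ along $f$ using $\Ext^1_A(Z,T_0)=0$ (the defining property of ${}^{\perp_{>0}}T$), subtract the resulting factorization, and then factor the remainder through $g$ by the cokernel property. The only cosmetic difference is that you spell out the vanishing of the composite before passing to the quotient, which the paper leaves implicit.
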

\begin{proof}
It is enough to show that $\Ker(-\circ f) \subset \Im(-\circ g)$.
Assume that $\alpha\in\Hom_{A}(Y,N)$ satisfies $f\alpha=0\in\Hom_{A}(X,N)/[T]$.
There exists a module $T^{\prime}\in\add T$ and morphisms $h_{1}:X\to T^{\prime}$, $h_{2}:T^{\prime}\to N$ such that $f\alpha =h_{1}h_{2}$.
Since $\Ext_{A}^{1}(Z,T)=0$, there exists a morphism $\beta: Y \to T^{\prime}$ such that $f\beta=h_{1}$.
Since $f(\alpha - \beta h_{2})=f\alpha -f\beta h_{2}=f\alpha-h_{1}h_{2}=0$, there exists a morphism $\gamma:Z\to M$ such that $g\gamma=\alpha -\beta h_{2}$.
	\begin{align*}
	\xymatrix{
	X\ar[r]^{f}\ar[d]_{h_{1}} & Y \ar[r]^{g}\ar[d]^/-0.2cm/{\alpha}\ar@{-->}[dl]_{\beta} & Z \ar@{-->}[dl]^{\gamma} \\
	T^{\prime} \ar[r]_{h_{2}} & N.
	}
	\end{align*}
\end{proof}
Let $X\in{}^{\perp_{>0}}T$.
By Proposition \ref{cotilt} (b), there exists an injective left $(\add T)$-approximation $f:X\to T^{\prime}$.
We have $\Cok f\in{}^{\perp_{>0}}T$.
We denote by $\Omega_{T}^{-}(X)$ a cokernel of $f$.
Note that $\Omega_{T}^{-}(X)$ is uniquely determined by $X$ up to direct summands in $\add T$.
Let $\Omega_{T}^{-n}(X)=\Omega_{T}^{-}(\Omega_{T}^{-(n-1)}(X))$ for $n>1$.
\begin{proposition}\label{longexa}
Let $0\to X \xto{f}Y \xto{g}Z\to0$ be an exact sequence  in ${}^{\perp_{>0}}T$. Then
\begin{itemize}
\item[(a)] We have an exact sequence $0\to Y \to Z\oplus T^{\prime}\to \Omega_{T}^{-}(X)\to0$, where $T^{\prime}\in\add T$.
\item[(b)] For any $A$-module $N$, we have the following long exact sequence 
\begin{align*}
\cdots \to\overline{\Hom}_{A}^{T}(\Omega_{T}^{-n}(Z),N) \to \overline{\Hom}_{A}^{T}(\Omega_{T}^{-n}(Y),N) \to \overline{\Hom}_{A}^{T}(\Omega_{T}^{-n}(X),N) \to \cdots \\
\cdots \to\overline{\Hom}_{A}^{T}(\Omega_{T}^{-}(X),N) \to\overline{\Hom}_{A}^{T}(Z,N) \to \overline{\Hom}_{A}^{T}(Y,N) \to \overline{\Hom}_{A}^{T}(X,N).
\end{align*}
\end{itemize}
\end{proposition}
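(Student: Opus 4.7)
For part (a), I will use a pushout construction. Applying Proposition \ref{cotilt}(a), choose an injective left $(\add T)$-approximation $f: X \to T'$ with $T' \in \add T$, and form the pushout $W$ of $f$ along the injection $X \hookrightarrow Y$. This yields a commutative diagram with two short exact sequences: horizontally $0 \to T' \to W \to Z \to 0$ (since the cokernel $Y/X = Z$ descends to $W/T'$), and vertically $0 \to Y \to W \to \Cok f = \Omega_T^-(X) \to 0$. Since $T' \in \add T$ and $Z \in {}^{\perp_{>0}} T$, we have $\Ext^1_A(Z, T') = 0$, so the horizontal sequence splits, giving $W \cong Z \oplus T'$. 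The vertical sequence then takes the desired form $0 \to Y \to Z \oplus T' \to \Omega_T^-(X) \to 0$.

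For part (b), the plan is to iterate part (a) together with Lemma \ref{exact}. Starting from $S_0 : 0 \to X \to Y \to Z \to 0$, I inductively define $S_{k+1}$ from $S_k : 0 \to X_k \to Y_k \to Z_k \to 0$ as the short exact sequence $0 \to Y_k \to Z_k \oplus T_k \to \Omega_T^-(X_k) \to 0$ produced by (a), with $T_k \in \add T$. Since all three terms of $S_k$ lie in ${}^{\perp_{>0}} T$, which is closed under $\Omega_T^-$, this iteration stays within ${}^{\perp_{>0}} T$. Using that $\Omega_T^-(Z_k \oplus T_{k-1}) \cong \Omega_T^-(Z_k)$ modulo $\add T$, unwinding shows that the third terms cycle as $Z_{3j} \cong \Omega_T^{-j}(Z)$, $Z_{3j+1} \cong \Omega_T^{-(j+1)}(X)$, and $Z_{3j+2} \cong \Omega_T^{-(j+1)}(Y)$ up to $\add T$-summands.

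Now I apply Lemma \ref{exact} to each $S_k$: because $\overline{\Hom}^T_A(T_k, N) = 0$ for $T_k \in \add T$, the $\add T$-summands disappear from the Hom-sequences, which thereby involve only $\overline{\Hom}^T_A(\Omega_T^{-n}(X), N)$, $\overline{\Hom}^T_A(\Omega_T^{-n}(Y), N)$, and $\overline{\Hom}^T_A(\Omega_T^{-n}(Z), N)$. Splicing these three-term exact sequences produces the long exact sequence claimed. The step I expect to be delicate is ensuring that the splices are consistent: concretely, in the decomposition $W \cong Z \oplus T'$ of part (a), the map $Y \to W$ has $Z$-component equal to the original surjection $Y \to Z$, so after killing the $T'$-part in $\overline{\Hom}^T_A(-, N)$, the map $\overline{\Hom}^T_A(Z, N) \to \overline{\Hom}^T_A(Y, N)$ induced from $S_{k+1}$ via Lemma \ref{exact} agrees with the corresponding map from $S_k$. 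The main obstacle is this summand-tracking, but it follows cleanly from the explicit pushout in (a).
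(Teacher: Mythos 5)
Your proposal is correct and follows essentially the same route as the paper: part (a) is the comparison of the given sequence with the left $(\add T)$-approximation sequence using $\Ext^1_A(Z,T)=0$ (the paper factors the approximation through $f$ to get a morphism of short exact sequences, which produces exactly your pushout $W\cong Z\oplus T'$), and part (b) is the same iteration of (a) combined with Lemma \ref{exact}. The summand-tracking and the check that the $Z$-component of $Y\to Z\oplus T'$ is the original surjection, which you spell out, is precisely what the paper's terse ``apply (a) and Lemma \ref{exact} inductively'' leaves implicit.
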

\begin{proof}
(a) Let $h : X\to T^{\prime}$ be an injective left $(\add T)$-approximation of $X$.
Since $\Ext_{A}^{1}(Z,T)=0$, $h$ factors through $f$, and therefore we have the following commutative diagram
\[\xymatrix{
0 \ar[r] &X \ar[r]\ar@{=}[d] & Y \ar[r]\ar[d] & Z \ar[r]\ar[d] & 0\\
0 \ar[r] &X \ar[r] &T^{\prime} \ar[r] & \Omega_{T}^{-}(X) \ar[r] & 0.
}\]
Thus we have an exact sequence $0\to Y \to Z\oplus T^{\prime}\to \Omega_{T}^{-}(X)\to0$.

(b) By applying (a) and Lemma \ref{exact} inductively, we have the assertion.
\end{proof}
In the following two propositions, we assume that the global dimension of $A$ is at most $n$.
\begin{proposition}\label{coraddt}
Let $X\in{}^{\perp_{>0}}T$.
If the global dimension of $A$ is at most $n$, then we have $\Omega_{T}^{-n}(X)\in\add T$.
\end{proposition}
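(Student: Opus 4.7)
The plan is to show that $\Omega_T^{-n}(X)$ satisfies the hypothesis of Proposition~\ref{cotilt}(b), namely that it lies in ${}^{\perp_{>0}}T$ and that $\Ext_A^1(Y,\Omega_T^{-n}(X))=0$ for every $Y\in{}^{\perp_{>0}}T$. Then the conclusion $\Omega_T^{-n}(X)\in\add T$ follows immediately.

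First I would verify that the subcategory ${}^{\perp_{>0}}T$ is closed under $\Omega_T^{-}$. By construction we have a short exact sequence
\[
0\to X\to T'\to \Omega_T^{-}(X)\to 0
\]
with $T'\in\add T$, and applying $\Hom_A(-,T)$ together with $X\in{}^{\perp_{>0}}T$ and $\Ext_A^i(T',T)=0$ for $i>0$ gives $\Omega_T^{-}(X)\in{}^{\perp_{>0}}T$. By induction $\Omega_T^{-k}(X)\in{}^{\perp_{>0}}T$ for every $k\geq 0$.

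Next, the key step is a dimension-shift computation. For any $Y\in{}^{\perp_{>0}}T$, apply $\Hom_A(Y,-)$ to the same short exact sequence. Because $Y\in{}^{\perp_{>0}}T$ and $T'\in\add T$, we get $\Ext_A^i(Y,T')=0$ for all $i\geq 1$, so the long exact sequence collapses to
\[
\Ext_A^i(Y,\Omega_T^{-}(X))\simeq \Ext_A^{i+1}(Y,X)\qquad\text{for all } i\geq 1.
\]
Iterating, $\Ext_A^i(Y,\Omega_T^{-k}(X))\simeq \Ext_A^{i+k}(Y,X)$ for all $i\geq 1$ and $k\geq 0$.

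Taking $k=n$ and using the assumption $\gl A\leq n$ gives $\Ext_A^i(Y,\Omega_T^{-n}(X))=0$ for every $i\geq 1$ and every $Y\in{}^{\perp_{>0}}T$, in particular for $i=1$. Since $\Omega_T^{-n}(X)\in{}^{\perp_{>0}}T$ by the first step, Proposition~\ref{cotilt}(b) applies and yields $\Omega_T^{-n}(X)\in\add T$. There is no real obstacle here once one has identified that $\add T$-approximations give exactly the dimension-shift formula above; the only subtlety is that the shift is valid because the approximating module $T'$ lies in $\add T$ and is therefore Ext-invisible to anything in ${}^{\perp_{>0}}T$.
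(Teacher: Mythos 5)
Your proposal is correct and follows essentially the same route as the paper: dimension-shifting along the left $(\add T)$-approximation sequences (the paper just assembles them into one coresolution $0\to X\to T_0\to\cdots\to T_{n-1}\to\Omega_T^{-n}(X)\to 0$) to get $\Ext_A^1(Y,\Omega_T^{-n}(X))\simeq\Ext_A^{n+1}(Y,X)=0$, and then invoking Proposition~\ref{cotilt}(b). Your preliminary check that ${}^{\perp_{>0}}T$ is closed under $\Omega_T^{-}$ is a fact the paper records just before the proposition (note that for the $\Ext^1$-vanishing one also uses surjectivity of $\Hom_A(T',T)\to\Hom_A(X,T)$, i.e.\ the approximation property), so this is only a cosmetic difference.
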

\begin{proof}
By Proposition \ref{cotilt} (b), it is enough to show that $\Ext_{A}^{1}(Y,\Omega_{T}^{-n}(X))=0$ for any $Y\in{}^{\perp_{>0}}T$.
Let $Y\in{}^{\perp_{>0}}T$.
By using Proposition \ref{cotilt} (a), we have the following exact sequence
\[
0 \to X \to T_{0} \xto{f_{0}} T_{1} \xto{f_{1}} \cdots \to T_{n-1} \xto{f_{n-1}}  \Omega_{T}^{-n}(X) \to 0,
\]
where $T_{i}\in\add T$ and $\Im f_{i}=\Omega_{T}^{-(i+1)}(X)$.
By applying $\Hom_{A}(Y,-)$ to this exact sequence, we have the following isomorphisms
	\begin{align*}
	\Ext^{1}_{A}(Y,\Omega_{T}^{-n}(X)) & \simeq \Ext_{A}^{2}(Y,\Omega_{T}^{-(n-1)}(X))\\
	& \simeq \Ext_{A}^{3}(Y,\Omega_{T}^{-(n-2)}(X))\\
	& \cdots \\
	& \simeq \Ext_{A}^{n+1}(Y,X)=0,
	\end{align*}
where the last equation follows from $\gl A\leq n$.
\end{proof}
\begin{proposition}\label{corexact}
Assume that the global dimension of $A$ is at most n.
For an exact sequence $0\to X \to Y \to Z \to 0$ in ${}^{\perp_{>0}}T$ and any $A$-module $N$, we have the following exact sequence
\begin{align*}
0 \to\overline{\Hom}_{A}^{T}(\Omega_{T}^{-(n-1)}(Z),N) \to \overline{\Hom}_{A}^{T}(\Omega_{T}^{-(n-1)}(Y),N) \to \overline{\Hom}_{A}^{T}(\Omega_{T}^{-(n-1)}(X),N) \to \cdots \\
\cdots \to\overline{\Hom}_{A}^{T}(\Omega_{T}^{-}(X),N) \to\overline{\Hom}_{A}^{T}(Z,N) \to \overline{\Hom}_{A}^{T}(Y,N) \to \overline{\Hom}_{A}^{T}(X,N).
\end{align*}
\end{proposition}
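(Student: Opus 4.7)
The plan is to obtain the desired truncated long exact sequence by combining the infinite long exact sequence of Proposition \ref{longexa}(b) with the vanishing statement of Proposition \ref{coraddt}.

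First, I would apply Proposition \ref{longexa}(b) directly to the given short exact sequence $0 \to X \to Y \to Z \to 0$ in ${}^{\perp_{>0}}T$. This produces the long exact sequence
\[
\cdots \to \overline{\Hom}_{A}^{T}(\Omega_{T}^{-n}(X),N) \to \overline{\Hom}_{A}^{T}(\Omega_{T}^{-(n-1)}(Z),N) \to \overline{\Hom}_{A}^{T}(\Omega_{T}^{-(n-1)}(Y),N) \to \overline{\Hom}_{A}^{T}(\Omega_{T}^{-(n-1)}(X),N) \to \cdots
\]
continuing down through $\overline{\Hom}_{A}^{T}(\Omega_{T}^{-}(X),N) \to \overline{\Hom}_{A}^{T}(Z,N) \to \overline{\Hom}_{A}^{T}(Y,N) \to \overline{\Hom}_{A}^{T}(X,N)$, which is exactly the desired sequence except that we need to establish the leftmost zero.

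Second, I would invoke Proposition \ref{coraddt}, which under the hypothesis $\gl A \leq n$ asserts $\Omega_{T}^{-n}(X) \in \add T$. Any morphism from a module in $\add T$ to $N$ trivially factors through $\add T$, so
\[
\overline{\Hom}_{A}^{T}(\Omega_{T}^{-n}(X),N) = 0.
\]
Substituting this zero into the long exact sequence from step one yields injectivity of the map $\overline{\Hom}_{A}^{T}(\Omega_{T}^{-(n-1)}(Z),N) \to \overline{\Hom}_{A}^{T}(\Omega_{T}^{-(n-1)}(Y),N)$, completing the truncated exact sequence.

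There is essentially no obstacle: the two preparatory results Proposition \ref{longexa}(b) and Proposition \ref{coraddt} do all the work, and the argument reduces to splicing them together. The only point requiring a brief justification is the vanishing $\overline{\Hom}_{A}^{T}(\Omega_{T}^{-n}(X),N)=0$ for modules in $\add T$, which is immediate from the definition of $\overline{\Hom}_{A}^{T}$.
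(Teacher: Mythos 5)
Your proposal is correct and coincides with the paper's own proof: apply Proposition \ref{longexa}(b) to the short exact sequence, then use Proposition \ref{coraddt} to conclude $\Omega_{T}^{-n}(X)\in\add T$, hence $\overline{\Hom}_{A}^{T}(\Omega_{T}^{-n}(X),N)=0$, which supplies the leftmost zero. Nothing further is needed.
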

\begin{proof}
By Proposition \ref{coraddt}, we have $\overline{\Hom}_{A}^{T}(\Omega_{T}^{-n}(X),N)=0$.
Therefore, we have a desired exact sequence by Proposition \ref{longexa} (b).
\end{proof}
Then we prove Theorem \ref{end2}.
\begin{proof}[Proof of Theorem \ref{end2}]
We show that projective dimensions of all right $\End_{A}(M)/[T]$-modules are at most $3n-1$.
Let $N$ be a right $\End_{A}(M)/[T]$-module.
There exist $X,Y\in{}^{\perp_{>0}}T$ and a homomorphism of $A$-modules $f:X\to Y$ which induce a minimal projective presentation of $N$,
\[
\overline{\Hom}_{A}^{T}(Y,M) \xto{-\circ f} \overline{\Hom}_{A}^{T}(X,M) \to N \to 0.
\]
Let $g:X\to T^{\prime}$ be an injective left $(\add T)$-approximation of $X$.
We have an injective morphism $h=f\oplus g: X \to Y\oplus T^{\prime}$.
Since $g$ is a left $(\add T)$-approximation of $X$, we have $\Cok h\in{}^{\perp_{>0}}T$.
Let $Z=\Cok h$.
We have an exact sequence $0\to X\xto{h}Y\oplus T^{\prime}\to Z\to0$.
By Proposition \ref{corexact}, we have the following exact sequence
\begin{align*}
0 \to\overline{\Hom}_{A}^{T}(\Omega_{T}^{-(n-1)}(Z),N) \to \overline{\Hom}_{A}^{T}(\Omega_{T}^{-(n-1)}(Y),N) \to \overline{\Hom}_{A}^{T}(\Omega_{T}^{-(n-1)}(X),N) \to \cdots \\
\cdots \to\overline{\Hom}_{A}^{T}(\Omega_{T}^{-}(X),N) \to\overline{\Hom}_{A}^{T}(Z,N) \to \overline{\Hom}_{A}^{T}(Y,N) \xto{-\circ f} \overline{\Hom}_{A}^{T}(X,N).
\end{align*}
Therefore the projective dimension of $N$ is at most $3n-1$.
\end{proof}
\section*{Acknowledgements}
The author is supported by Grant-in-Aid for JSPS Fellowships 15J02465.
He would like to thank my supervisor Osamu Iyama for his support and many helpful comments.
He is grateful to Kota Yamaura for helpful comments and discussions.
The author thanks Yuya Mizuno, Gustavo Jasso, and Takahide Adachi for taking care of me.

\end{document}